\tikzstyle{V}=[fill=black,circle,scale=0.2, outer sep = 4pt]
\newtheorem{thm}{Theorem}[section]
\newtheorem{prop}[thm]{Proposition}
\newtheorem{cor}[thm]{Corollary}
\newtheorem{lemma}[thm]{Lemma}
\theoremstyle{remark}
\newtheorem{rmk}[thm]{Remark}
\newtheorem{example}[thm]{Example}
\theoremstyle{definition}
\newtheorem{defn}[thm]{Definition}
\newtheorem{hypothesis}[thm]{Hypothesis}
\DeclareMathOperator{\diam}{diam}
\renewcommand{\1}{{\bf 1}}
\newcommand{\bi}{\begin{itemize}}
\newcommand{\ei}{\end{itemize}}
\newcommand{\be}{\begin{enumerate}}
\newcommand{\ee}{\end{enumerate}}
\newcommand{\C}{\mathbb{C}}
\renewcommand{\H}{\mathcal{H}}
\newcommand{\K}{\mathcal{K}}
\newcommand{\R}{\mathbb{R}}
\newcommand{\N}{\mathbb{N}}
\providecommand{\keywords}[1]{{\textit{Key words and phrases:}} #1}
\providecommand{\classification}[1]{{\textit{2010 Mathematics Subject Classification:}} #1}
\def\IoIIdimdots(#1/#2/#3,#4){\node at (#1,#4) {$.$};\node at (#2,#4) {$.$};\node at (#3,#4) {$.$};}
\def\IIoIIdimdots(#1,#2/#3/#4){\node at (#1,#2) {$.$};\node at (#1,#3) {$.$};\node at (#1,#4) {$.$};}
\def\IoIIIdimdots(#1/#2/#3,#4,#5){\node at (#1,#4,#5) {$.$};\node at (#2,#4,#5) {$.$};\node at (#3,#4,#5) {$.$};}
\def\IIoIIIdimdots(#1,#2/#3/#4,#5){\node at (#1,#2,#5) {$.$};\node at (#1,#3,#5) {$.$};\node at (#1,#4,#5) {$.$};}
\def\IIIoIIIdimdots(#1,#2,#3/#4/#5){\node at (#1,#2,#3) {$.$};\node at (#1,#2,#4) {$.$};\node at (#1,#2,#5) {$.$};}
\begin{document}

\title{Spectral triples and wavelets for higher-rank graphs}

\author{Carla Farsi, Elizabeth Gillaspy, Antoine Julien, Sooran Kang, and Judith Packer}

\date{\today}

\maketitle

\begin{abstract}

In this paper, we  present a new way to associate a {finitely summable} spectral triple to a higher-rank graph $\Lambda$, {via the infinite path space $\Lambda^\infty$ of $\Lambda$}.  Moreover, we prove that this spectral triple has a close connection to the wavelet decomposition of 
$\Lambda^\infty$ which was introduced by Farsi, Gillaspy, Kang, and Packer in 2015. We first introduce the concept of stationary $k$-Bratteli diagrams, in order to associate a family of ultrametric Cantor sets, and their associated Pearson-Bellissard spectral triples, to a finite, strongly connected higher-rank graph $\Lambda$.
We then study the {zeta function, abscissa of convergence,} and Dixmier trace associated to the Pearson-Bellissard spectral triples of these Cantor sets{, and show these spectral triples are $\zeta$-regular in the sense of Pearson and Bellissard.} {We obtain an integral formula for the Dixmier trace given by integration against a measure $\mu$, }and show that $\mu$ is a rescaled version of
 the measure $M$ on 
 $\Lambda^\infty$ 
 which was introduced by an Huef, Laca, Raeburn, and Sims.
 Finally, we investigate the eigenspaces of a family of Laplace-Beltrami operators associated to the Dirichlet forms of the spectral triples.  We show that these eigenspaces refine   the wavelet decomposition of $L^2(\Lambda^\infty, M)$ which was constructed by Farsi et al. 
\end{abstract}

\classification{46L05, 46L87, 58J42.}

\keywords{Finitely summable spectral triple, wavelets, higher-rank graph, $\zeta$-function, Laplace-Beltrami operator,  Dixmier trace, $k$-Bratteli diagram, ultrametric Cantor set.}

\tableofcontents

{

\section{Introduction}
Both spectral triples and wavelets are algebraic structures which encode geometrical information.  In this paper, we expand the correspondence established in \cite{FGJKP1} between wavelets and spectral triples for the infinite path space of the Cuntz algebras $\mathcal O_N$ to the setting of higher-rank graphs. 
To be precise, we associate a family of Pearson-Bellissard spectral triples \cite{pearson-bellissard} to {the infinite path space of a} higher-rank graph (or $k$-graph) $\Lambda$, and relate these spectral triples with the representation of the higher-rank graph $C^*$-algebra $C^*(\Lambda)$ {on the infinite path space,} and the associated wavelet decomposition, which were introduced in \cite{FGKP}.  We also investigate the geometry of ultrametric Cantor sets associated to $\Lambda$ by studying the $\zeta$-functions and Dixmier traces  associated to  these spectral triples.

Spectral triples were  introduced by Connes in \cite{connes} as a noncommutative generalization  of  a compact Riemannian manifold.  A spectral triple consists of a representation  of a pre-$C^*$-algebra $\mathcal{A}$ on a Hilbert space $\mathcal{H}$, together with a Dirac-type operator $D$ on $\mathcal{H}$, which satisfy certain commutation relations.
In the case when $\mathcal{A} = C^\infty(X)$ is the algebra of smooth  functions on a compact spin manifold $X$, Connes showed \cite{connes-reconstruction} that the algebraic structure of the associated spectral triple suffices to reconstruct the Riemannian metric on $X$. Moreover, Connes established in \cite{connes}  that the spectral dimension and Dixmier trace of this spectral triple recover the Riemannian volume form on $X$. To be precise,  the dimension $\delta$  of the manifold $X$ agrees with the spectral dimension of $(C^\infty(X), D, \H)$. Furthermore, for any $f \in C^\infty(X)$, the Dixmier trace $\text{Tr}_\omega(f|D|^{-\delta})$ is independent of the choice of generalized limit $\omega$, and gives a rescaled version of $\int_X f\, d\nu$, where $\nu$ denotes the volume form associated to the Riemannian metric.
For more general spectral triples, the $\zeta$-function and Dixmier trace associated to a spectral triple also play important roles in the applications of spectral triples to physics, from the standard model \cite{connes-lott} to classical field theory \cite{kalau-hamilton-formalism}.

In addition to spin manifolds, Connes studied spectral triples for the triadic Cantor set and Julia set in \cite{connes, connes-mcdonald-sukochev-zanin}.   Shortly thereafter, Lapidus \cite{lapidus} suggested studying spectral triples $(\mathcal{A}, \H, D)$ where $\mathcal{A} $ is a commutative algebra of functions on a fractal space $X$, and 
investigating which aspects of the geometry of $X$ are recovered from the spectral triple.   Of the many authors (cf.~\cite{christensen-ivan-lapidus, guido-isola, pearson-bellissard}) who have pursued Lapidus' program, we focus here on the spectral triples introduced by Pearson and Bellissard in \cite{pearson-bellissard}.

Motivated by a desire to apply the tools of noncommutative geometry to the study of transversals of aperiodic Delone sets \cite{bellissard-gambaudo}, 
 Pearson and Bellissard constructed  in \cite{pearson-bellissard} spectral triples for ultrametric Cantor sets associated to Michon trees.   They also showed how to recover geometric information about the Cantor set $\mathcal{C}$ from their spectral triple: using the $\zeta$-function and the Dixmier trace, Pearson and Bellissard reconstructed the ultrametric and the upper box dimension
 of $\mathcal{C}$.  Moreover, they constructed a family of Laplace-Beltrami operators $\Delta_s$, $s\in \R$, on $L^2(\mathcal{C}, \mu)$, where the measure $\mu$  arises from the Dixmier trace.
 Julien and Savinien subsequently applied the Pearson-Bellissard spectral triples to the study of substitution tilings in \cite{julien-savinien}, by sharpening many of the results from \cite{pearson-bellissard} and reinterpreting them using stationary Bratteli diagrams.

In this paper, we  extend the Pearson-Bellissard spectral triples to the setting of higher-rank graphs.
 A $k$-dimensional generalization of directed graphs, higher-rank graphs (also called $k$-graphs) were introduced by Kumjian and Pask in \cite{kp}.
The combinatorial character of $k$-graph $C^*$-algebras has facilitated the analysis of their structural properties,  such as  simplicity and ideal structure \cite{rsy2, robertson-sims, davidson-yang-periodicity, kang-pask, ckss}, quasidiagonality \cite{clark-huef-sims} and KMS states \cite{aHLRS, aHLRS1, aHKR}.  In particular, results such as \cite{spielberg-kirchberg,bnrsw,bcfs,prrs} show that  higher-rank graphs often provide concrete examples of $C^*$-algebras which are relevant to Elliott's  classification program for simple separable nuclear $C^*$-algebras.

By associating Pearson-Bellissard spectral triples to $k$-graphs, this paper establishes a link between $k$-graphs and their $C^*$-algebras, and the extensive literature on the spectral geometry of fractal and Cantor sets (cf.~\cite{christensen-ivan-AF, christensen-ivan-lapidus, christensen-ivan-schrohe, guido-isola, KS, kesse-samuel, LaSa} and the references therein).  In these cases, as is the case in the present paper, the pre-$C^*$-algebra of the spectral triple is abelian.  Since the $C^*$-algebra of a graph or $k$-graph is rarely abelian, other researchers (cf.~\cite{CPR-Semifinite, Goffeng-Mesland-Doc, goffeng-mesland-rennie})
have studied non-abelian spectral triples for graph $C^*$-algebras and related objects; the research in this paper offers a complementary perspective on the noncommutative geometry of higher-rank graph $C^*$-algebras, and in particular on the connection between wavelets and spectral triples.

In order to associate Pearson-Bellissard spectral triples to $k$-graphs, we introduce  a new class of Bratteli diagrams: namely, the stationary $k$-Bratteli diagrams.  
Where a stationary Bratteli diagram is completely determined by a single square matrix $A$, the stationary $k$-Bratteli diagrams are determined by $k$ matrices $A_1, \ldots, A_k$; see Definition \ref{def-k-brat-diagrm} below. The space of  infinite paths $X_{\mathcal B}$ of a stationary $k$-Bratteli diagram  
{$\mathcal{B}$} is often a Cantor set, enabling us to study its associated Pearson-Bellissard spectral triple.  Indeed, if the matrices $A_1, \ldots, A_k$ are the adjacency matrices for a $k$-graph $\Lambda$, then the space of infinite paths in $\Lambda$ is homeomorphic to the  Cantor set  {$X_{\mathcal B}$ (also called $\partial {\mathcal B}$).}  In other words, the Pearson-Bellissard spectral triples for stationary $k$-Bratteli diagrams can also be viewed as spectral triples for higher-rank graphs.

We then proceed to study, in Section \ref{sec:zeta-regular}, the geometrical information encoded by these spectral triples.  Theorem \ref{thm:abscissa-conv} establishes that the Pearson-Bellissard spectral triple associated to $(X_{\mathcal B_\Lambda}, d_\delta)$ is finitely summable, with dimension $\delta \in (0,1)$.  Section \ref{sec:dixmier} focuses on the Dixmier traces of the spectral triples, and establishes both an integral formula for the Dixmier trace (Theorems \ref{thm:Dixmier-trace} and \ref{thm:Dixmier-trace-final}) and a concrete expression for the measure induced by the Dixmier trace (Theorem \ref{thm:dixmier-aHLRS}).  These computations also reveal that the ultrametric Cantor sets $(X_{\mathcal B_\Lambda}, d_\delta)$ are $\zeta$-regular in the sense of \cite[Definition 11]{pearson-bellissard}.  Other settings in the literature in which spectral triples on Cantor sets admit an integral formula for the Dixmier trace include \cite{christensen-ivan-AF, kesse-samuel, CDI, CI-Two}.

In full generality,  Dixmier traces are defined on the Dixmier-Macaev  (also called  Lorentz) ideal  $\mathcal M_{1, \infty} \subseteq \mathcal{K(H)}$ inside the  compact operators and are computed using a generalized limit $\omega$  (roughly speaking, a linear functional that lies between $\limsup$ and $\liminf$). Although the theory of Dixmier traces can be quite intricate, many of the computations simplify substantially in our setting, and so our treatment of the general theory will be brief; we refer the interested reader to the extensive literature on Dixmier traces and other singular traces (cf.~\cite{connes, lord-book, lord-sedaev-sukochev, CPS, kesse-samuel, GI-vN, LS}). 
For each such generalized limit $\omega$, there is an $\omega$-Dixmier trace $\mathcal{T}_\omega$ defined on $\mathcal M_{1, \infty};$ however, if $T \in \mathcal M_{1, \infty}$ is measurable in the sense of Connes, then the value of $\mathcal T_\omega(T)$ is independent of $\omega$, and in many cases can be computed via residue formulas.  Indeed this is the case for $T= |D|^{-\delta}$, see Corollary \ref{cor:D-inv-Connes-mbl}, if $D$ is the Dirac operator of the Pearson-Bellissard spectral triple associated to the ultrametric Cantor set $(X_{\mathcal B_\Lambda}, d_\delta)$.
The calculation of the Dixmier trace of $|D|^{-\delta}$ is one of the most technical results of the paper, since it relies on the explicit computation of a residue formula, and was inspired by a related result (Theorem 3.9 of \cite{julien-savinien}) for the case of stationary Bratteli diagrams with primitive adjacency matrices.  Theorem \ref{thm:belong-trac-ideal} underlies the major results mentioned in the previous paragraph.

The complexity of stationary $k$-Bratteli diagrams, as compared to the stationary Bratteli diagrams studied in \cite{julien-savinien}, complicates the analysis of the $\zeta$-function and Dixmier trace of our spectral triples. 
However, a side benefit of our approach is that, when restricted to the setting of stationary Bratteli diagrams, the theorems in Section \ref{sec:zeta-regular} below hold for an irreducible matrix $A$.  Thus, even for stationary Bratteli diagrams, the results in this paper are new: the authors of \cite{pearson-bellissard, julien-savinien} imposed on $A$ the stronger requirement of primitivity.

As mentioned earlier, one of our motivations for studying Pearson-Bellissard spectral triples for $k$-graphs was to understand their relationship with the wavelets and representations for $k$-graphs introduced in \cite{FGKP}.
Wavelet analysis has many applications in various areas of mathematics, physics and engineering. For example, it  has been used to study $p$-adic spectral analysis \cite{kozyrev}, pseudodifferential operators and dynamics on ultrametric spaces \cite{khrennikov-kozyrev,khrennikov-kozyrev2}, and the theory of quantum gravity \cite{ellis-mavromatos-nanopoulos-sakharov,battle-federbush-uhilg}.

Although wavelets were introduced  as orthonormal bases or frames for     $L^2(\mathbb R^n)$  which behaved well under compression algorithms, wavelet decompositions for $L^2(X)$, where $X$ is a fractal space, were defined by Jonsson \cite{jonsson} and Strichartz \cite{strichartz} shortly thereafter. In this fractal setting, the wavelet orthonormal bases reflect the self-similar structure of $X$.  A few years later, Jonsson and Strichartz' fractal wavelets  inspired Marcolli and Paolucci \cite{marcolli-paolucci} to  construct a  wavelet decomposition of $L^2(\Lambda_A, \mu)$ for the Cuntz-Krieger algebra $\mathcal{O}_A$, where $A$ is an $N\times N$ matrix,  $\Lambda_A$ denotes the limit set of infinite sequences in an alphabet on $N$ letters, and $\mu$ is a Hausdorff measure on $\Lambda_A$. Similar wavelets were developed in the higher-rank graph setting by four of the authors of the current paper  \cite{FGKP}, using a separable representation $\pi$ of the $k$-graph $C^*$-algebra $C^*(\Lambda)$. In particular, this representation gave us a wavelet decomposition of $L^2(\Lambda^\infty, M)$, where $\Lambda^\infty$ denotes the space of infinite paths in the $k$-graph $\Lambda$, and the measure $M$  was introduced by an Huef et al.~in \cite{aHLRS}.  This wavelet decomposition 
is given by 
\begin{equation}
\label{eq:wavelet-intro}  
  L^2(\Lambda^\infty, M) = \mathscr V_0 \oplus \bigoplus_{n\geq 0} \mathcal W_n.\end{equation}
Each subspace\footnote{The subspaces denoted in this paper by $\mathcal W_n$ were labeled $\mathcal{W}_{j,\Lambda}$ for $j\in \N$ in Theorem~4.2 of \cite{FGKP}.} $\mathcal W_n = \{ S_\lambda f: f \in \mathcal W_0, \lambda \in \Lambda^{(n, \ldots, n)}\}$ is constructed from $\mathcal W_0$ by means of limit``scaling and translation'' operators  $S_\lambda := \pi(s_\lambda)$ which reflect the (higher-rank) graph structure of $\Lambda$. (See Theorem~4.2 of \cite{FGKP} or  Section~\ref{sec-wavelets-as-eigenfunctions} below.)

One of the main results of this paper, Theorem \ref{thm:JS-wavelets}, proves that the spectral triples   of Pearson and Bellissard \cite{pearson-bellissard} are  intimately tied to the wavelets of \cite{FGKP}.  
Recall that a Pearson-Bellissard spectral triple for an ultrametric Cantor set $\mathcal C$ gives rise to a  family of Laplace-Beltrami operators $\Delta_s$, $s\in \R$, on $L^2(\mathcal{C}, \mu)$  associated to the spectral triple's Dirichlet form as in Equation \eqref{eq:Dirichlet-form} below.
Julien and Savinien established in \cite{julien-savinien} that in the  Bratteli diagram setting  the eigenspaces  of $\Delta_s$ are  parametrized by the finite paths $\gamma$ in the Bratteli diagram.
Theorem~\ref{thm:JS-wavelets} establishes that when $(\mathcal C, \mu) = (\Lambda^\infty, M)$, the eigenspaces $E_\gamma$ of the Laplace-Beltrami operators
refine the wavelet decomposition of \eqref{eq:wavelet-intro}.

This paper is organized as follows. In Section~\ref{sec:background}, we recall the basic facts about higher-rank graphs (or $k$-graphs) and we  develop the machinery of stationary $k$-Bratteli diagrams (Definition \ref{def-k-brat-diagrm}).  This enables us to  construct a family of ultrametrics $\{ d_\delta: \delta \in (0,1)\}$ on the infinite path space $\Lambda^\infty$  of a $k$-graph $\Lambda$, identified as the boundary of the associated  stationary $k$-Bratteli diagram $\mathcal B_\Lambda$.  In many situations, $\Lambda^\infty \cong X_{\mathcal B_\Lambda}$ is a Cantor set (see Proposition \ref{pr:inf-path-cantor}); Section \ref{sec:zeta-regular} studies the fine structure of the Pearson-Bellissard spectral triples associated to the ultrametric Cantor sets $\{X_{\mathcal B_\Lambda}, d_\delta\}_{\delta\in (0,1)}$. We begin by allowing $\delta$ to range over the interval $(0,1)$ because there is no {\em a priori} preferred value of $\delta$ in this range; later, we see in Corollary \ref{cor:spec-dim} that the Pearson-Bellissard spectral triple of $(X_{\mathcal B_\Lambda}, d_\delta)$ has  dimension $\delta$.  However, other properties of the spectral triple (cf.~Theorem \ref{thm:dixmier-aHLRS}) are independent of the choice of $\delta \in (0,1)$.

{The major technical achievements of this paper are Theorems \ref{thm:abscissa-conv} and \ref{thm:belong-trac-ideal}.} These results underpin Theorems  \ref{thm:dixmier-aHLRS}  and  \ref{thm:Dixmier-trace-final}, which offer less computationally intensive perspectives on the Dixmier trace. Theorem \ref{thm:abscissa-conv} establishes that the $\zeta$-function of the spectral triple associated to the ultrametric Cantor set $(X_{\mathcal{B}_\Lambda}, d_\delta)$ has abscissa of convergence $\delta$, while Theorem \ref{thm:belong-trac-ideal} enables the computation of the Dixmier trace integral formula in Theorems \ref{thm:Dixmier-trace} and \ref{thm:Dixmier-trace-final}, which in turn reveals the $\zeta$-regularity of $(X_{\mathcal B_\Lambda}, d_\delta)$.
Theorem \ref{thm:dixmier-aHLRS} then shows that under mild additional hypotheses, the measures $\mu_\delta$ which appear in the Dixmier trace integral formula are simply a rescaling of the measure $M$ on the infinite path space  $X_{\mathcal{B}_\Lambda}$ that was introduced in Proposition 8.1 of \cite{aHLRS} and which we used in \cite{FGKP} to construct a wavelet decomposition of $L^2(\Lambda^\infty, M)$.  
  
 Finally, Section \ref{sec-wavelets-as-eigenfunctions} presents the promised connection between the Pearson-Bellissard spectral triples and the wavelet decomposition of $L^2(\Lambda^\infty, M)$ from \cite{FGKP}.  Under appropriate hypotheses we show in Theorem~\ref{thm:wavelet-k} that the eigenspaces $E_\gamma$ of the Laplace-Beltrami operator $\Delta_s$ refine the wavelet decomposition of \eqref{eq:wavelet-intro}: namely, for all $n \in \N$,
 \[\mathcal{W}_n =  \bigoplus_{nk \leq |\gamma| < (n+1)k} E_\gamma.\]

\subsection*{Acknowledgments} {The authors thank the anonymous referee for their detailed and insightful comments, which substantially improved the paper. We also thank Sasha Gorokhovsky, Robin Deeley,  and Palle Jorgensen for helpful discussions.} E.G.~was partially supported by the SFB 878 ``Groups, Geometry, and Actions'' of the Westf\"alische-Wilhelms-Universit\"at M\"unster, and also by the National Science Foundation (DMS-1800749). C.F.~and J.P.~were partially supported by two individual grants
from the Simons Foundation (C.F. \#523991; J.P. \#316981). S.K.~was supported by Basic Science Research Program through the National Research Foundation of Korea (NRF) funded by the Ministry of Education (\#2017R1D1A1B03034697). 

\section{Higher-rank graphs and ultrametric Cantor sets}
\label{sec:background}
 In this section, we review the basic definitions and results that we will need about directed graphs, higher-rank graphs, (weighted/stationary) Bratteli diagrams, infinite path spaces, and (ultrametric) Cantor sets.  Throughout this article, $\N$ will denote the non-negative integers.

\subsection{Bratteli diagrams}

A  \emph{directed graph} is given by a quadruple $E = (E^0, E^1, r, s)$, where $E^0$ is the set of vertices of the graph, $E^1$ is the set of edges, and $r, s: E^1 \to E^0$ denote the range and source of each edge.
A vertex $v$ in a directed graph $E$ is a \emph{sink} if $s^{-1}(v) = \emptyset;$ we say $v$ is a \emph{source} if $r^{-1}(v) = \emptyset$.

\begin{defn}\label{def:bratteli-diagram}\cite{bezuglyi-jorgensen}
A \emph{Bratteli diagram} $\mathcal{B}=(\mathcal{V}, \mathcal{E})$ is a directed graph with vertex set $\mathcal{V}= \bigsqcup_{n \in \N} \mathcal{V}_n$, and edge set $\mathcal{E} = \bigsqcup_{n\geq 1} \mathcal{E}_n$, where $\mathcal{E}_n$ consists of edges whose source vertex lies in $\mathcal{V}_{n}$ and whose range vertex lies in $\mathcal{V}_{n-1}$, and  $\mathcal{V}_n$ and $\mathcal{E}_n$ are finite sets for all $n$.

For a Bratteli diagram $\mathcal{B}=(\mathcal{V}, \mathcal{E})$, define a sequence of adjacency matrices $A_n=(f^n(v,w))_{v,w}$ of $\mathcal{B}$ for $n\ge 1$, where
\[
f^n(v,w)=\#\Big( \{ e\in \mathcal{E}_{n} : r(e)=v\in \mathcal{V}_{n-1}, \,s(e)=w \in \mathcal{V}_{n}\} \Big),
\]
where by $\#(Q)$ we denote the cardinality of the set $Q$.
A Bratteli diagram is \emph{stationary} if $A_n=A_1=:A$ are the same for all $n\ge 1$.  We say that $\eta$ is a \emph{finite} path of $\mathcal{B}$ if there exists $m\in \N$ such that $\eta=\eta_1\dots \eta_m$ for $\eta_i\in \mathcal{E}_i$, and in that case the \emph{length} of $\eta$, denoted by $| \eta|$, is $m$.
\end{defn}

\begin{rmk}
In the literature, Bratteli diagrams traditionally have $s(\mathcal{E}_n) = \mathcal{V}_{n}$ and $r(\mathcal{E}_n) = \mathcal{V}_{n+1}$; our edges point the other direction for consistency with the standard conventions for higher-rank graphs and their $C^*$-algebras.  

It is also common in the literature to require $| \mathcal{V}_0 | = 1$ and to call this vertex the \emph{root} of the Bratteli diagram; we will NOT invoke this hypothesis in this paper.
\end{rmk}

\begin{defn}
\label{def-k-brat-diagrm-inf-path-space} 
Given a Bratteli diagram $\mathcal{B}=(\mathcal{V}, \mathcal{E})$,  denote by $X_{\mathcal{B}}$ the set of all of its infinite paths:
\[ X_{\mathcal{B}} = \{(x_n)_{n \ge 1} : x_n \in \mathcal{E}_n \text{ and } s(x_n) = r(x_{n+1})\;\;\text{for $n\ge 1$}\}.
\]
 For each finite path $\lambda = \lambda_1 \lambda_2 \cdots \lambda_\ell$  in $\mathcal{B}$ with $r(\lambda) \in \mathcal{V}_0$, $\lambda_i\in \mathcal{E}_i$, 
 define the \emph{cylinder set} $[\lambda]$ by 
\[ [ \lambda ] = \{ x = (x_n)_{n \ge 1} \in X_{\mathcal{B}}:  x_i = \lambda_i \;\;\text{for}\;\;  1 \leq i \leq \ell\}.\]
The collection $\mathcal{T}$ of all cylinder sets forms a compact open sub-basis for a locally compact Hausdorff topology on $X_{\mathcal{B}}$ and cylinder sets are clopen; we will always consider $X_{\mathcal{B}}$ with this topology.
\end{defn}

The following proposition will tell us
when $X_{\mathcal{B}}$ is a {\em Cantor set}; that is, a totally disconnected, compact, perfect topological space.

\begin{prop} (Lemma 6.4.~of  \cite{amini-elliott-golestani})
\label{pr:inf-path-cantor}
 Let $\mathcal{B} = (\mathcal{V},\mathcal{E})$ be a Bratteli diagram such that $\mathcal{B}$ has no sinks outside of $\mathcal{V}_0$, and no sources. Then $X_{\mathcal{B}}$ is a totally disconnected compact Haudorff space, and the following statements are equivalent:
\begin{enumerate}
\item The infinite path space $ X_{\mathcal{B}}$ of $\mathcal{B}$  is a  Cantor set;
\item For each infinite path $x = (x_1, x_2, ....)$ in $ X_{\mathcal{B}}$ and each $n \geq 1$ there is
an infinite path $y = (y_1, y_2, ....)$  with 
\[
x \not= y \ \text{and}\  x_k = y_k \text{ for } 1 \leq k \leq n;
\]
\item  For each $n \in \N$ and each $v \in \mathcal{V}_n$ there is $m \geq n$ and $w \in \mathcal{V}_m$ such that there is a path from $w$ to $v$ and 
\[
\#(r^{-1}(\{w\})) \geq 2.
\]
\end{enumerate}
\end{prop}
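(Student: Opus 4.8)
The plan is to treat the topological claims first, then prove $(1)\Leftrightarrow(2)$ by elementary point-set topology and $(2)\Leftrightarrow(3)$ by explicit path surgery, with the two standing hypotheses (no sources, and no sinks off $\mathcal V_0$) used precisely to extend finite paths in both directions.

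First I would record that $X_{\mathcal B}$ is a closed subspace of the compact product $\prod_{n\ge 1}\mathcal E_n$ of finite discrete sets, the defining relations $s(x_n)=r(x_{n+1})$ cutting out a closed set; hence $X_{\mathcal B}$ is compact, and it inherits total disconnectedness and Hausdorffness from the product. Since the countably many cylinder sets form a clopen basis, the space is second countable, hence metrizable, and the hypotheses force every finite path to extend downward to $\mathcal V_0$ and upward to infinity, so $X_{\mathcal B}\neq\emptyset$. Consequently a Cantor set here is exactly a perfect space, and the entire content of $(1)$ is that $X_{\mathcal B}$ has no isolated points.

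For $(1)\Leftrightarrow(2)$: because the cylinders $\{[x_1\cdots x_n]:n\ge 1\}$ form a neighborhood basis at $x=(x_n)$, the singleton $\{x\}$ is open iff $[x_1\cdots x_n]=\{x\}$ for some $n$. Thus $x$ is non-isolated iff for every $n$ the cylinder $[x_1\cdots x_n]$ contains a point other than $x$, which is word-for-word statement $(2)$; so perfectness and $(2)$ coincide.

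For $(2)\Leftrightarrow(3)$ I would isolate the translation that, for a fixed infinite path $x$ and index $n$, the existence of $y\ne x$ with $y_k=x_k$ for $k\le n$ is equivalent to the existence of $m\ge n$ with $\#\, r^{-1}(s(x_m))\ge 2$: one direction reads off the first index $m+1$ where $y$ and $x$ diverge (so $x_{m+1},y_{m+1}$ are two distinct edges into $s(x_m)$), and the other replaces $x_{m+1}$ by a second edge into $s(x_m)$ and extends forward, which is legitimate because there are no sources. Granting this, $(2)\Rightarrow(3)$ goes: given $v\in\mathcal V_n$, use the no-sink/no-source hypotheses to construct an infinite path $x$ whose level-$n$ vertex is $v$, apply $(2)$ (at level $\max(n,1)$, to also cover $n=0$), and take $w=s(x_m)$ at the divergence level; then $\#\, r^{-1}(w)\ge 2$ and the intervening segment realizes a path from $w$ down to $v$. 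Conversely $(3)\Rightarrow(2)$ takes $x$, $n\ge 1$, and $v=s(x_n)$, feeds $v$ into $(3)$ to obtain a branching vertex $w$ and a path $\beta$ from $w$ to $v$, and splices $y=x_1\cdots x_n\,\beta\, e\,\gamma$, where $e$ is an edge into $w$ chosen distinct from $x$'s continuation and $\gamma$ is any infinite tail (available since there are no sources). I expect the main obstacle to be exactly this bookkeeping: one must check that each spliced path is a genuine infinite path (consecutive source/range matches, and forward extendability from the no-source hypothesis) and is genuinely distinct from $x$, which forces a short case split according to whether $x$ already runs through $w$ along $\beta$. The point-set step and $(1)\Leftrightarrow(2)$ are routine by comparison.
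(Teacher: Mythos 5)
Your proposal is correct, but there is no internal proof to compare it against: the paper states this proposition without proof, importing it directly as Lemma 6.4 of \cite{amini-elliott-golestani}. Your argument is a sound, self-contained substitute, and its structure is the natural one. Realizing $X_{\mathcal{B}}$ as a closed subspace of the product $\prod_{n\ge 1}\mathcal{E}_n$ of finite discrete spaces settles the compact, Hausdorff, totally disconnected claims, and since the cylinders $[x_1\cdots x_n]$ form a neighborhood basis at $x$, a point $x$ is isolated exactly when some cylinder $[x_1\cdots x_n]$ equals $\{x\}$, which makes $(1)\Leftrightarrow(2)$ immediate. Your translation lemma is the right pivot for $(2)\Leftrightarrow(3)$: the first index $m+1$ at which $y$ diverges from $x$ exhibits two distinct edges with range $s(x_m)$, so $\#\bigl(r^{-1}(s(x_m))\bigr)\ge 2$; conversely such a branching vertex permits the re-routing $y=x_1\cdots x_m\, e\,\gamma$ with $e\ne x_{m+1}$, the infinite tail $\gamma$ existing because there are no sources. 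The hypotheses enter exactly where you say: in $(2)\Rightarrow(3)$ the no-sink (off $\mathcal{V}_0$) and no-source conditions let you run an infinite path through an arbitrary $v\in\mathcal{V}_n$ before invoking $(2)$, and in $(3)\Rightarrow(2)$ your case split --- according to whether $x$ already follows $\beta$ through $w$, in which case one chooses $e\ne x_{m+1}$ --- correctly guarantees $y\ne x$. The only unstated conventions are harmless: the vertex sets are implicitly nonempty (so $X_{\mathcal{B}}\ne\emptyset$), the subspace topology from the product coincides with the cylinder-set topology of Definition \ref{def-k-brat-diagrm-inf-path-space}, and when $m=n$ in $(3)$ the ``path from $w$ to $v$'' is the trivial path at $v$.
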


\subsection{Higher-rank graphs and stationary $k$-Bratteli diagrams}
\begin{defn}
\label{def-k-brat-diagrm}
Let $A_1, A_2, \cdots, A_k$ be  $N \times N$ matrices with non-negative integer entries.  The \emph{stationary $k$-Bratteli diagram} associated to the matrices $A_1, \ldots, A_k$, which we will call  $ \mathcal{B}_{(A_j)_{j=1,...,k} }$, is the Bratteli diagram given by a  set of vertices $\mathcal{V} = \bigsqcup_{n\in \N} \mathcal{V}_n$ and a set of edges $\mathcal{E} = \bigsqcup_{n\geq 1} \mathcal{E}_n$, where the edges in $\mathcal{E}_n$ go from $\mathcal{V}_{n}$ to $\mathcal{V}_{n-1}$, such that:
\begin{itemize}
\item[(a)] For each $n \in \N$, $\mathcal{V}_n$ consists of $N$ vertices, which we will label $1, 2, \ldots, N$. 

\item[(b)]  When $ n \equiv i \pmod{k}$, there are $A_i(p,q)$ edges whose range is the vertex $p$ of $\mathcal{V}_{n-1}$ and whose source is the vertex $q$ of $\mathcal{V}_{n}$. 
\end{itemize} 

\end{defn}

In other words, the matrix $A_1$ determines the edges with source  in $\mathcal{V}_1$ and range  in $\mathcal{V}_0$; then the matrix $A_2$ determines the edges with source in $\mathcal{V}_2$ and range  in $\mathcal{V}_1$; etc.  The matrix $A_k$ determines the edges with source in $\mathcal{V}_{k}$ and range in $\mathcal{V}_{k-1}$, and the matrix $A_1$ determines the edges with range in $\mathcal{V}_{k}$ and source in $\mathcal{V}_{k+1}$.

Note that a stationary 1-Bratteli diagram is often called a \emph{stationary Bratteli diagram} in the literature (cf.~\cite{bezuglyi-jorgensen, julien-savinien}).

Just as a directed graph has an associated adjacency matrix $A$ which also describes a stationary Bratteli diagram $\mathcal{B}_A$, the higher-dimensional generalizations of directed graphs known as \emph{higher-rank graphs} or $k$-graphs give us $k$ commuting matrices $A_1, \ldots, A_k$ and hence a stationary $k$-Bratteli diagram.  

  We use the standard terminology and notation for higher-rank graphs, which we review below for the reader's convenience.  

\begin{defn}
\label{def:k-graph}\cite{kp}
    A \emph{$k$-graph} is a countable small category $\Lambda$ equipped with a degree functor\footnote{We view $\N^k$ as a category with one object, namely $0$, and with composition of morphisms given by addition.} $d: \Lambda \to \N^k$ satisfying the \emph{factorization property}: whenever $\lambda$ is a morphism in $\Lambda$ such that $d(\lambda) = m+n$, there are unique morphisms $\mu, \nu \in \Lambda$ such that $d(\mu) = m, d(\nu) = n$, and $\lambda = \mu \nu$.   
    
We use the arrows-only picture of category theory; thus, $\lambda \in \Lambda$ means that $\lambda$ is a morphism in $\Lambda$.  
For $ n \in \N^k$, we write 
\[ \Lambda^n := \{ \lambda \in \Lambda: d(\lambda) = n\} .\]
When $n=0$,  $\Lambda^0$ is the set of objects of $\Lambda$, which we also refer to as the \emph{vertices} of $\Lambda$. 

Let  $r, s: \Lambda \to \Lambda^0$  identify the range and source of each morphism, respectively.  For $v \in \Lambda^0$ a vertex, we define 
\[v\Lambda^n := \{\lambda \in \Lambda^n : r(\lambda) = v\} \text{ and } \Lambda^n w := \{ \lambda \in \Lambda^n: s(\lambda) = w\}.\]
We say that $\Lambda$ is \emph{finite} if $\#(\Lambda^n )< \infty$ for all $n \in \N^k$, and  we say $\Lambda$ is \emph{source-free} or \emph{has no sources} if $\#(v\Lambda^n ) > 0$ for all $v \in \Lambda^0$ and   $n \in \N^k$.

For $1 \leq i \leq k$, write $e_i$ for the $i$th standard basis vector of $\N^k$, and define a matrix $A_i \in M_{\Lambda^0}(\N)$ by 
\[ A_i(v, w) = \#( v \Lambda^{e_i} w ).\]
We call $A_i$ the \emph{$i$th adjacency matrix} of $\Lambda$.  Note that the factorization property implies that the matrices $A_i$ commute.
\end{defn}

Despite their formal definition as a category, it is often useful to think of $k$-graphs as $k$-dimensional generalizations of directed graphs.  In this interpretation, $\Lambda^{e_i}$ is the set of ``edges of color $i$'' in $\Lambda$.  The factorization property implies that each $\lambda \in \Lambda$ can be written as a concatenation of edges in the following sense: A morphism $\lambda \in \Lambda$ with $d(\lambda) = (n_1, n_2, \ldots, n_k)$ can be thought of as a $k$-dimensional hyper-rectangle of dimension $n_1 
\times n_2 \times \cdots \times n_k$.  Any minimal-length lattice path in $\N^k$ through the rectangle lying between 0 and $(n_1, \ldots, n_k)$ corresponds to a choice of how to order the edges making up $\lambda$, and hence to a unique decomposition or ``factorization'' of $\lambda$.  For example, the lattice path given by walking in straight lines from $0$ to $(n_1, 0, \ldots, 0)$ to $(n_1, n_2, 0, \ldots, 0)$ to $(n_1, n_2, n_3, 0, \ldots, 0)$, and so on,   corresponds to the factorization of  $\lambda$ into edges of color 1, then edges of color 2, then edges of color 3, etc.

For any directed graph $E$, the category of its finite paths $\Lambda_E$ is a 1-graph; the degree functor $d: \Lambda_E \to \N$ takes a finite path $\lambda$ to its length $|\lambda|$.  Example \ref{ex:Omega-k} below gives a less trivial example of a $k$-graph.  The $k$-graphs $\Omega_k$ of Example \ref{ex:Omega-k}
are also fundamental to the definition of the space of infinite paths in a $k$-graph.

\begin{example}
For $k\ge 1$, let $\Omega_k$ be the small category with 
\[ \text{Obj}\, (\Omega_k) = \N^k, \  \text{Mor} \, (\Omega_k) = \{ (m, n) \in \N^k \times \N^k: m \leq n \}, \quad \ r(m,n) = m, \ s(m,n) = n.\]
If we define $d:\Omega_k\to \N^k$ by $d(m,n)=n-m$, then $\Omega_k$ is a $k$-graph with degree functor $d$.
\label{ex:Omega-k}
\end{example}

\begin{defn} 
\label{def:inf-path-space}
Let $\Lambda$ be a $k$-graph.
An \emph{infinite path} of $\Lambda$ is a $k$-graph morphism 
\[
x: \Omega_k \to \Lambda;
\]
 we write $\Lambda^\infty$ for the set of infinite paths in $\Lambda$.  For each $p \in \N^k$, we have a map $\sigma^p: \Lambda^\infty \to \Lambda^\infty$ given by 
  \[
  \sigma^p(x)(m,n) = x(m+p, n+p)
  \]
  for $x\in \Lambda^\infty$ and $(m,n)\in \Omega_k$.
  
\end{defn}
 
 \begin{rmk} \label{rmk:S2_shift}
 \begin{itemize}
\item[(a)] Given $x\in \Lambda^\infty$, we often write $r(x):=x(0)=x(0,0)$ for the terminal vertex of $x$. This convention means that an infinite path has a range but not a source. 

We equip $\Lambda^\infty$ with the topology generated by the sub-basis $\{[\lambda]: \lambda \in \Lambda\}$ of compact open sets, where 
\[ [\lambda] = \{x\in \Lambda^\infty: x(0, d(\lambda)) = \lambda\}.\]
Remark 2.5 of \cite{kp} establishes that, with this topology, $\Lambda^\infty$ is a locally compact Hausdorff space.

Note that we use the same notation for a cylinder set of $\Lambda^\infty$ and a cylinder set of $X_{\mathcal{B}}$ in Definition~\ref{def-k-brat-diagrm-inf-path-space} since we will prove in Proposition~\ref{pr:kgraph-bratteli-inf-path-spaces}  and  Remark \ref{rmk:no-bij-finite-paths} (a) that
$\Lambda^\infty$ is homeomorphic and Borel isomorphic to $X_{\mathcal{B}_\Lambda}$ for a finite, source-free $k$-graph $\Lambda$. 
 
\item[(b)] For any $\lambda \in \Lambda$ and any $x \in \Lambda^\infty$ with $r(x) = s(\lambda)$, we write $\lambda x$ for the unique infinite path $y \in \Lambda^\infty$ such that $y(0, d(\lambda)) = \lambda$ and $\sigma^{d(\lambda)}(y) = x$.  
If $d(\lambda) = p$, the maps $\sigma^{p}$ and $\sigma_\lambda := x \mapsto \lambda x$ are local homeomorphisms which are mutually inverse: 
\[\sigma^p \circ \sigma_\lambda = id_{[s(\lambda)]}, \quad \sigma_\lambda \circ \sigma^p = id_{[\lambda]},\]
although the domain of $\sigma^p$ is $\Lambda^\infty \supsetneq [\lambda]$. 

Informally, one should think of $\sigma^p$ as ``chopping off'' the initial segment of length $p$, and the map $x \mapsto \lambda x$ as ``gluing $\lambda$ on'' to the front of $x$.  By ``front'' and ``initial segment'' we mean the range of $x$, since an infinite path has no source.

\end{itemize}
\end{rmk}

We can now state precisely the connection between $k$-graphs and stationary $k$-Bratteli diagrams.

\begin{prop}
\label{pr:kgraph-bratteli-inf-path-spaces}
Let $\Lambda$ be a finite, source-free $k$-graph with adjacency matrices $A_1, \ldots, A_k$.  Denote by $\mathcal{B}_\Lambda$ the stationary $k$-Bratteli diagram associated to the matrices $\{A_i\}_{i=1}^k$.  Then $X_{\mathcal{B}_\Lambda}$ is homeomorphic to $\Lambda^\infty$.
\end{prop}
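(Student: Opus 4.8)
The plan is to build an explicit homeomorphism $\phi \colon \Lambda^\infty \to X_{\mathcal{B}_\Lambda}$ that reads off, from an infinite path in $\Lambda$, the sequence of its constituent edges in the order dictated by the ``staircase'' factorization. First I would record the identification underlying $\mathcal{B}_\Lambda$. For each $n \ge 1$ both $\mathcal{V}_n$ and $\Lambda^0$ consist of the same $N$ vertices, and if $c(n)$ denotes the unique element of $\{1,\dots,k\}$ with $c(n) \equiv n \pmod k$, then by construction $\mathcal{E}_n$ has exactly $A_{c(n)}(p,q) = \#\big(p\,\Lambda^{e_{c(n)}}q\big)$ edges from $q \in \mathcal{V}_n$ to $p \in \mathcal{V}_{n-1}$. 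I therefore fix, once and for all, bijections $\mathcal{E}_n \cong \Lambda^{e_{c(n)}}$ with the color-$c(n)$ edges of $\Lambda$ that respect range and source. Under these, a finite Bratteli path $\lambda_1\cdots\lambda_n$ rooted at $\mathcal{V}_0$ becomes a composable string of edges $\lambda_1,\dots,\lambda_n$ in $\Lambda$ with $\lambda_j \in \Lambda^{e_{c(j)}}$, whose product has degree $\sum_{j=1}^n e_{c(j)}$.

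Next I would define $\phi$. Given $x \in \Lambda^\infty$ and $m \ge 0$, I apply the (iterated) factorization property to $x(0,(m,\dots,m)) \in \Lambda^{(m,\dots,m)}$ along the lattice path that increments coordinates $1,2,\dots,k,1,2,\dots$ in turn, obtaining a unique factorization $x(0,(m,\dots,m)) = \mu_1\mu_2\cdots\mu_{mk}$ with $\mu_j \in \Lambda^{e_{c(j)}}$. Because the staircase to $(m+1,\dots,m+1)$ passes through $(m,\dots,m)$, uniqueness of the factorization guarantees that these factorizations are prefix-compatible as $m$ grows, so they assemble into a single sequence $(\mu_j)_{j\ge1}$. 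Composability $s(\mu_j) = r(\mu_{j+1})$ holds in $\Lambda$, and hence, under the identification above, in $\mathcal{B}_\Lambda$, so $\phi(x) := (\mu_j)_{j\ge1}$ lies in $X_{\mathcal{B}_\Lambda}$.

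For the inverse I would go the other way: given $(\mu_j)_{j\ge1} \in X_{\mathcal{B}_\Lambda}$, the products $\mu_1\cdots\mu_{mk} \in \Lambda^{(m,\dots,m)}$ are defined by composability, and for any $(p,q) \in \Omega_k$ I set $\psi\big((\mu_j)\big)(p,q)$ to be the unique degree-$(q-p)$ factor of $\mu_1\cdots\mu_{mk}$ cut out by the factorization $\Lambda^{(m,\dots,m)} = \Lambda^{p}\,\Lambda^{q-p}\,\Lambda^{(m,\dots,m)-q}$, for any $m$ with $q \le (m,\dots,m)$. Here I would verify that this is independent of the choice of $m$ and defines a genuine degree-preserving functor $\Omega_k \to \Lambda$; both follow from the uniqueness and associativity of factorizations. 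That $\phi$ and $\psi$ are mutually inverse is then again a direct consequence of uniqueness.

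Finally I would show $\phi$ is a homeomorphism. Since $\Lambda$ is finite, $\Lambda^\infty = \bigsqcup_{v\in\Lambda^0}[v]$ is compact (a finite union of the compact open sets $[v]$), $X_{\mathcal{B}_\Lambda}$ is compact Hausdorff by Proposition~\ref{pr:inf-path-cantor}, and both spaces are Hausdorff; so it suffices to prove $\phi$ is continuous. This is immediate from the cylinder description, since for a finite Bratteli path one has $\phi^{-1}\big([\lambda_1\cdots\lambda_n]\big) = [\lambda_1\cdots\lambda_n]$, the cylinder set in $\Lambda^\infty$ of the morphism $\lambda_1\cdots\lambda_n$ of degree $\sum_{j\le n}e_{c(j)}$, so preimages of sub-basic open sets are open. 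I expect the main obstacle to be the bookkeeping in the inverse direction: verifying that the locally defined factors $\psi\big((\mu_j)\big)(p,q)$ are consistent across different truncation levels $m$ and genuinely respect composition in $\Omega_k$, which is where the full strength of the factorization property — beyond the diagonal morphisms $(0,(m,\dots,m))$ — is needed.
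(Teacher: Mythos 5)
Your proof is correct, and its heart — the bijection — is the same as the paper's: both decompose $x \in \Lambda^\infty$ into its staircase ("rainbow") edge sequence $f_1^1 f_2^1 \cdots f_k^1 f_1^2 \cdots$ via iterated use of the factorization property, and both reconstruct an infinite path from a Bratteli path by using the fact that a degree-preserving functor $\Omega_k \to \Lambda$ is determined by its diagonal initial segments $x(0, n\1)$ (the paper simply cites Remark 2.2 of \cite{kp} for this, whereas you re-derive it by checking that the middle factors are consistent across truncation levels $m$ — the same verification, done by hand). Where you genuinely diverge is the topological step. The paper shows that the bijection matches cylinder sets on both sides — in particular the "square" cylinder sets $[\lambda]$ with $d(\lambda) = n\1$ — and then invokes the fact (from the proof of Lemma 4.1 of \cite{FGKP}) that square cylinder sets generate the topology of $\Lambda^\infty$, so that a cylinder-preserving bijection is a homeomorphism. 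You instead prove only one-way continuity (the $\phi$-preimage of a Bratteli cylinder is a $\Lambda$-cylinder, hence open) and then upgrade the continuous bijection to a homeomorphism via compactness of $\Lambda^\infty$ and Hausdorffness of $X_{\mathcal{B}_\Lambda}$. Your route avoids the appeal to \cite{FGKP} and any discussion of which cylinder sets generate which topology; the paper's route is purely combinatorial and symmetric in the two directions, so it never needs compactness — though since finiteness of $\Lambda$ is a standing hypothesis here, your use of it costs nothing.

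One caution: you quote Proposition \ref{pr:inf-path-cantor} for the fact that $X_{\mathcal{B}_\Lambda}$ is compact Hausdorff, but that proposition assumes $\mathcal{B}$ has no sinks outside $\mathcal{V}_0$ and no sources. Source-freeness of $\Lambda$ gives positive row sums of the $A_i$, hence no sources in $\mathcal{B}_\Lambda$; but "no sinks outside $\mathcal{V}_0$" amounts to positive column sums, i.e.\ every vertex of $\Lambda$ emitting an edge of every color, which is \emph{not} implied by $\Lambda$ being source-free. This does not damage your argument, because the property you actually need is elementary and holds for any Bratteli diagram with finite edge sets: $X_{\mathcal{B}_\Lambda}$ is a closed subspace of the compact Hausdorff product $\prod_{n \geq 1} \mathcal{E}_n$ of finite discrete spaces (equivalently, distinct infinite paths are separated by disjoint cylinder sets). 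Justify Hausdorffness and compactness that way rather than through Proposition \ref{pr:inf-path-cantor}, whose hypotheses you have not verified and which may fail here.
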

\begin{proof}
Fix $x\in \Lambda^\infty$ and write $\1 := (1, 1, \ldots, 1) \in \N^k$. Then the factorization property for $\Lambda^\infty$ implies that there is a unique sequence 
\[ (\lambda_i)_i \in \prod_{i=1}^\infty \Lambda^{\1}\]
such that $x = \lambda_1 \lambda_2 \lambda_3 \cdots$ with  $\lambda_i = x((i-1)\1, i \1)$. (See the details in Remark~2.2 and Proposition~2.3 of \cite{kp}).  Since there is a unique way to write  $\lambda_i = f_1^i f_2^i \cdots f_k^i$ as a composable sequence of edges with $d(f_j^i) = e_j$, we have
\[x = f_1^1 f^1_2 \cdots f^1_k f^2_1 f^2_2 \cdots f^2_k f^3_1 \cdots\; , \]
 where the $n k + j$th edge has color $j$. Thus, for each $i$, $f^i_j$ corresponds to an entry in $A_j$, and hence 
\[f_1^1 f^1_2 \cdots f^1_k f^2_1 f^2_2 \cdots f^2_k f^3_1 \cdots \in X_{\mathcal{B}_\Lambda}.\]

Conversely, given $y= (g_\ell)_\ell \in X_{\mathcal{B}_\Lambda}$, we  construct an associated $k$-graph infinite path
$\tilde y \in \Lambda^\infty$ as follows.  
To $y = (g_\ell)_\ell $ we associate a sequence $(\eta_n)_{n\geq 1}$ of finite paths in $\Lambda$, where 
\[\eta_n = g_1 \cdots g_{n k}\]
is the unique morphism in $\Lambda$ of degree $(n, \ldots, n)$ represented by the sequence of composable edges $g_1 \cdots g_{n k}$.  
Recall from \cite{kp} Remark 2.2 that  a morphism $\tilde y : \Omega_k \to \Lambda$ is uniquely determined by $\{\tilde y(0, n\1) \}_{n \in \N}$.  Thus, the sequence $(\eta_n)_n$ determines $\tilde y$:
\[ \tilde y(0, 0) = r(y) = r(g_1), \qquad \tilde y(0, n \1) := \eta_n \ \forall \ n \geq 1.\]
 The map $y \mapsto \tilde{y}$ is easily checked to be a bijection which is inverse to the map $x \mapsto  f_1^1 f^1_2 \cdots f^1_k f^2_1 f^2_2 \cdots f^2_k f^3_1 \cdots $.
 
Moreover, for any $i \in \N$,  $ 0 \leq j\leq k-1$, and any 
$\lambda = f_1^1 f_2^1 \cdots f_k^1 f_1^2 f_2^2 \cdots f_k^2 f_1^3 \cdots f^i_j$
 with $d(\lambda) =( i-1) \1  + (\overbrace{1, \ldots, 1}^j, 0, \ldots, 0)$, both of these bijections preserve the cylinder set $[\lambda]$.  In particular, these bijections preserve the ``square'' cylinder sets $[\lambda]$ associated to paths $\lambda$ with $d(\lambda) =  i \1 $ for some $i \in \N$.  (If $i =0$ then we interpret $d(\lambda) = 0 \cdot \1$ as meaning that $\lambda$ is a vertex in $\mathcal{V}_0 \cong \Lambda^0$.)
 From the proof of Lemma~4.1 of \cite{FGKP}, any cylinder set can be written as a disjoint union of square cylinder sets, and therefore the square cylinder sets generate the topology on $\Lambda^\infty$. We deduce that $\Lambda^\infty$ and $X_{\mathcal{B}_\Lambda}$ are homeomorphic, as claimed.
\end{proof}

\begin{rmk}
\label{rmk:no-bij-finite-paths}
\begin{itemize}

\item[(a)]
Thanks to Proposition \ref{pr:kgraph-bratteli-inf-path-spaces}, we will usually identify the infinite path spaces $X_{\mathcal B_\Lambda}$ and $\Lambda^\infty$, denoting this space by the symbol which is most appropriate for the context.  In particular, the Borel structures on $X_{\mathcal B_\Lambda}$ and $\Lambda^\infty$ are isomorphic, and so any Borel measure on $\Lambda^\infty$ induces a unique Borel measure on $X_{\mathcal B_\Lambda}$ and vice versa.

\item[(b)] The bijection of Proposition \ref{pr:kgraph-bratteli-inf-path-spaces}
 between infinite paths in the $k$-graph $\Lambda$ and in the associated Bratteli diagram $\mathcal{B}_\Lambda$ does not extend to finite paths.
While any finite path in the Bratteli diagram determines a finite path, or morphism, in $\Lambda$, not all morphisms in $\Lambda$ have a representation in the Bratteli diagram. For example, if $e_1$ is a morphism of degree $(1,0, \ldots, 0)\in \N^k$ in a $k$-graph ($k > 1$) with $r(e_1) = s(e_1)$, the composition $e_1 e_1$ is a morphism in the $k$-graph which cannot be represented as a path on the Bratteli diagram.
 However, the proof of Proposition \ref{pr:kgraph-bratteli-inf-path-spaces} above establishes that ``rainbow'' paths in $\Lambda$ -- morphisms of degree $(\overbrace{q+1, \ldots, q+1}^j, q, \ldots, q)$ for some $q \in \N$ and $1 \leq j \leq k$ --  can be represented uniquely as paths of length $kq+j$ in the Bratteli diagram.
  
\end{itemize}
\end{rmk}

\subsection{Ultrametrics on $X_{\mathcal{B}}$}
\label{sec:ultrametrics}
Although the Cantor set is unique up to homeomorphism, different metrics on it can induce quite different geometric structures. In this section, we will focus on Bratteli diagrams $\mathcal B$ for which the infinite path space $X_{\mathcal B}$ is a Cantor set.  In this setting, we
 construct ultrametrics on $X_{{\mathcal B}}$ by using weights on $\mathcal{B}$.  To do so, we first need to introduce some definitions and notation.

\begin{defn}  A metric $d$ on a Cantor set $\mathcal{C}$ is called an {\it ultrametric}
if $d$ induces the 
Cantor set topology and  satisfies {the so-called {\em strong triangle inequality}}
\begin{equation}\label{eq:strong-inequality}
d(x,y ) \leq \max\{ d(x,z),d(y,z)\} \quad\text{for all $x,y,z \in \mathcal{C}$}.
\end{equation}
\end{defn}

\begin{defn}
\label{def:finite-path-Bratteli}
Let $\mathcal{B}$ be a Bratteli diagram.  Denote by $F\mathcal{B}$ the set of finite paths in $\mathcal{B}$ with range in $\mathcal{V}_0$.  For any $n \in \N$, we write 
\[ F^n \mathcal{B} = \{ \lambda \in F\mathcal{B}: |\lambda| = n \}.\]   

Given two (finite or infinite) paths $\lambda, \eta$ in $\mathcal{B}$, we say $\eta$ is a {\em sub-path} of $\lambda$ if there is a sequence $\gamma$ of edges, with $r(\gamma) = s(\eta)$, such that $\lambda = \eta \gamma$.

For any two infinite paths $x, y \in X_{\mathcal{B}}$, we define $x \wedge y $ to be the longest path $ \lambda \in F\mathcal{B}$ such that $ \lambda $ is a sub-path of $x$ and $y$.  We write 
 $x \wedge y = \emptyset$ when no such path $\lambda$ exists.
\end{defn}

\begin{defn}
\label{def:weight}
(cf.~\cite{pearson-bellissard})
 A \emph{weight} on a Bratteli diagram $\mathcal{B}$ is a function $w: F\mathcal{B} \to \R^+$ such that 
\begin{itemize}
\item If $\mathcal{V}_0$ denotes the set of vertices at level $0$, then $\sum_{v\in \mathcal{V}_0} w(v) \leq 1$. 
\item  $\lim_{n\to \infty} \sup \{ w(\lambda): \lambda \in F^n\mathcal{B} \} = 0 .$
\item If $\eta$ is a sub-path of $\lambda$, then $w(\lambda) < w(\eta)$.
\end{itemize}
A Bratteli diagram with a weight is often called a weighted Bratteli diagram and denoted by $(\mathcal{B},w)$.
\end{defn}
Observe that the third condition  implies that for any path $x = (x_n)_n \in \mathcal{B}$  (finite or infinite), 
\[w\left( x_1 x_2 \ldots x_n \right) > w\left( x_1 x_2 \cdots x_{n+1} \right) \quad\text{for all $n$}.\]

The concept above of a weight was inspired by Definition 2.9 of \cite{julien-savinien} which was in turn inspired by the work of \cite{pearson-bellissard}; indeed, if one denotes a weight in the sense of \cite{julien-savinien} Definition~2.9 by $w'$, and defines $w(\lambda) := w'(s(\lambda))$, then  $w$ is a weight on $\mathcal{B}$ in the sense of Definition~\ref{def:weight} above.

\begin{prop}\label{pro:weight-to-metric}
Let $(\mathcal{B},w)$ be a weighted Bratteli diagram such that $X_{\mathcal{B}}$ is a Cantor set.  The function $d_w: X_\mathcal{B} \times X_{\mathcal{B}} \to \R^+$ given by 
\[ 
d_w(x,y) = \begin{cases}
1 & \text{ if } x \wedge y = \emptyset, \\
 0  &  \text{ if } x=y, \\
w(x \wedge y) & \text{ else.}
\end{cases}
\]
is an ultrametric on $X_{\mathcal{B}}$. Moreover $d_w$ metrizes the cylinder set topology on $X_{\mathcal{B}}$. 
\label{pr:weight-ultrametric}
\end{prop}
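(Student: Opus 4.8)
The plan is to verify the metric axioms, upgrade the ordinary triangle inequality to the strong triangle inequality \eqref{eq:strong-inequality}, and then identify the metric balls with cylinder sets. The easy axioms I would dispose of first: symmetry is immediate since $x \wedge y = y \wedge x$, and positive-definiteness follows from the hypotheses on $w$. Indeed, if $x \neq y$ then $x \wedge y$ is either $\emptyset$ (so $d_w(x,y) = 1$) or a genuine finite path $\lambda \in F\mathcal{B}$ with $w(\lambda) > 0$, since $w$ takes values in $\R^+$; and $d_w(x,x) = 0$ by definition.

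The crux is the strong triangle inequality, which I would extract from a purely combinatorial lemma about prefixes: \emph{for any $x, y, z \in X_{\mathcal{B}}$, the minimum of the three agreement lengths $|x \wedge y|$, $|x \wedge z|$, $|y \wedge z|$ is attained at least twice, and the two meets attaining it coincide as paths.} To prove this I would note that $x \wedge z$ and $y \wedge z$ are both initial sub-paths of $z$, hence one is a sub-path of the other; assuming without loss of generality that $|x \wedge z| \le |y \wedge z|$, the path $x \wedge z$ is then a prefix of $y$ as well as of $x$, forcing $|x \wedge y| \ge |x \wedge z|$, and a short case split on whether $|x \wedge y|$ strictly exceeds $|x \wedge z|$ shows the two smallest agreement lengths are equal with equal meets. (The degenerate cases — empty meets, or two of $x,y,z$ equal — are handled by treating the empty path as the shortest prefix and by observing the inequality is trivial when two of the points coincide.) Because the third weight axiom makes $w$ strictly decreasing along sub-paths, "two smallest agreement lengths equal" translates into "the maximum of the three values $d_w$ is attained at least twice," so $d_w(x,y)$ can never be the strict maximum and therefore $d_w(x,y) \le \max\{d_w(x,z), d_w(y,z)\}$. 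The ordinary triangle inequality follows a fortiori from $\max\{a,b\} \le a+b$.

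Finally, to show $d_w$ metrizes the cylinder topology, I would compute the balls explicitly. Writing $p_m = x_1 \cdots x_m$ for the length-$m$ prefix of $x$, strict monotonicity of $w$ along $p_0, p_1, p_2, \ldots$ yields $[p_m] = \{ y : d_w(x,y) \le w(p_m)\}$ and, for $0 < r \le 1$, the identity $B(x,r) = [p_m]$ with $m = \min\{\ell : w(p_\ell) < r\}$; this $m$ exists because the second weight axiom $\lim_{n} \sup_{\lambda \in F^n\mathcal{B}} w(\lambda) = 0$ forces $w(p_\ell) \to 0$, while for $r > 1$ one has $B(x,r) = X_{\mathcal{B}}$. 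Hence every open ball equals a cylinder set (or a finite union of cylinder sets when $m=0$, respectively the whole space), giving $\tau_{d_w} \subseteq \tau_{\mathrm{cyl}}$. Conversely, each cylinder $[\lambda]$ is $d_w$-open, since for every $x \in [\lambda]$ one has $x \in B(x, w(\lambda)) \subseteq [\lambda]$, giving $\tau_{\mathrm{cyl}} \subseteq \tau_{d_w}$.

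The main obstacle is the combinatorial lemma underlying the strong triangle inequality: it is the one step requiring genuine argument, whereas every other step is a direct consequence of the three defining properties of a weight — especially the strict decay $w(\lambda) < w(\eta)$ for $\eta$ a sub-path of $\lambda$, and the vanishing $\sup_{F^n\mathcal{B}} w \to 0$, which together pin down both the ultrametric inequality and the shape of the balls.
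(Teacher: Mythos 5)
Your proposal is correct and takes essentially the same approach as the paper: the strong triangle inequality is obtained by comparing longest common prefixes (your lemma that the minimal agreement length among $|x\wedge y|$, $|x\wedge z|$, $|y\wedge z|$ is attained twice, with coinciding meets, is precisely the content of the paper's inline case analysis), and the topologies are matched by identifying metric balls with cylinder sets. The only cosmetic differences are that you compute open balls $B(x,r)$ and spell out both topology inclusions, whereas the paper identifies closed balls $B[x,r]$ with cylinder sets and notes these are open in an ultrametric space.
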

\begin{proof}
It is evident from the defining conditions of a weight that $d_w$ is symmetric and satisfies $d_w(x, y) = 0 \Leftrightarrow x=y$.
Since the inequality  \eqref{eq:strong-inequality}  is stronger than the triangle inequality, once we show that $d_w$ satisfies the ultrametric condition \eqref{eq:strong-inequality} it will follow that $d_w$ is indeed a metric.

To that end, first suppose that $d_w(x, y) = 1$; in other words, $x$ and $y$ have no common sub-path.  This implies that for any $z \in X_{\mathcal{B}}$, at least one of $d_w(x, z)$ and $d_w(y,z)$ must be 1, so 
\[ d_w(x, y) \leq \max \{ d_w(x, z), d_w(y,z)\},\]
as desired.  Now, suppose that $d_w(x, y) = w(x \wedge y ) < 1$. If $d_w(x, z) \geq d_w(x, y)$ for all $z \in X_\mathcal{B}$ then we are done.  On the other hand, if there exists $z \in X_\mathcal{B}$ such that $d_w(x, z) < d_w(x, y)$, then the maximal common sub-path of $x$ and $z$ must be  longer than that of $x$ and $y$.  This implies that 
\[ d_w(y, z) := w(y \wedge z) = w(y \wedge x) = d_w(x,y);\]
consequently, in this case as well we have $d_w(x, y) \leq \max \{ d(x, z), d_w(y,z)\}$.

Finally, we observe that the metric topology induced by $d_w$ agrees with the cylinder set topology. This fact may be known, but because we did not find the proof in the literature, we include it here. Let $B[x, r]$ be the closed ball of center $x$ and radius $r>0$.  We will show first that $B[x,r] \subset [x_1 \cdots x_n]$ for some $n \in \N$. 
 To obtain an easy upper bound on the diameter of $B[x,r]$, choose $y, z \in B[x,r]$ and observe that 
\[ d_w(y, z) \leq \max \{ d_w(x, y) , d_w(x, z) \} \leq r.\]
Taking supremums reveals that $\text{diam}\, B[x, r] \leq r$. 

We now check that $B[x,r] =[x_1 \cdots x_n]$ for some $n\in \N.$ By  the definition of the weight $w$, there is a smallest $ n\in \N$ such that 
	\[w(x_1 \cdots x_n) \leq \text{diam}\, B[x,r].\]
If $y \in B[x,r]$, then
	\[ \text{diam}\, B[x,r]\geq d_w(x, y ) = w(x \wedge y ) = w(x_1 \cdots x_m)\]
	for some $m \geq n \in \N$ by Definition \ref{def:weight} and
	the minimality of $n$.  It follows that $y \in [x_1 \cdots x_n],$ so that $B[x,r]\subset [x_1 \cdots x_n]$.
	On the other hand, if $z\in [x_1 \cdots x_n]$ then 
	\[ d_w(z, x) = w(z \wedge x) \leq w(x_1 \cdots x_n) \leq \text{diam}\, B[x,r]\leq r.\]
	so $z \in B[x,r]$ by construction, and hence $[x_1 \cdots x_n]\subset B[x,r]$.  In other words, $B[x,r] = [x_1 \cdots x_n]$ as claimed, so cylinder sets of $X_{\mathcal{B}}$ and closed balls (which are open in the topology induced by the metric $d_w$) agree.  (If $n =0$ then we interpret $[x_1 
	\cdots x_n]$ as $[r(x)]$.)
\end{proof}

\subsection{Strongly connected higher-rank graphs}
\label{sec:strongly-conn}
When $\Lambda$ is a finite $k$-graph whose adjacency matrices satisfy some additional properties, there is a natural family $\{w_\delta\}_{0 < \delta < 1}$ of weights on the associated Bratteli diagram $\mathcal{B}_\Lambda$ which induce  ultrametrics on the infinite path space $X_{\mathcal{B}_\Lambda}$.  We describe these additional properties on $\Lambda$ and the formula of the weights $w_\delta$  below.

\begin{defn}
A $k$-graph $\Lambda$ is \emph{strongly connected} if, for all $v, w \in \Lambda^0$, $v\Lambda w \not= \emptyset$.
\end{defn}

In Lemma 4.1 of \cite{aHLRS}, an Huef et al.~show that a finite $k$-graph $\Lambda$ is strongly connected if and only if the adjacency matrices $A_1, \ldots, A_k$ of $\Lambda$ form an \emph{irreducible family of matrices}.    Also,  Proposition 3.1 of \cite{aHLRS} implies that if $\Lambda$ is a finite strongly connected $k$-graph, then there is a unique positive vector $x^\Lambda \in (0, \infty)^{\Lambda^0}$ such that $\sum_{v \in \Lambda^0} x^\Lambda_v =1$ and  for  all $1 \leq i \leq k$,
\[ A_i x^\Lambda = \rho_i x^\Lambda,\]
where $\rho_i$ denotes the spectral radius of $A_i$.  We call $x^\Lambda$ the \emph{Perron-Frobenius eigenvector} of $\Lambda$. Moreover, an Huef et al.~constructed a Borel probability measure $M$ on $\Lambda^\infty$ in Proposition~8.1 of \cite{aHLRS} when $\Lambda$ is finite, strongly connected $k$-graph. 
The measure $M$ on $\Lambda^\infty$  is given by 

\begin{equation}\label{eq:kgraph_Measure}
M([\lambda])=\rho(\Lambda)^{-d(\lambda)}x^{\Lambda}_{s(\lambda)} \quad\text{for $\lambda\in\Lambda$,}
\end{equation}
where $x^\Lambda$ is the Perron-Frobenius eigenvector of $\Lambda$ and  $\rho(\Lambda)=(\rho_1,\dots, \rho_k)$, and for $n=(n_1, \ldots, n_k) \in \N^k$, 
\[ 
\rho(\Lambda)^n : = \rho^{n_1}_1 \cdots \rho_k^{n_k}.
\]

{We know from Remark \ref{rmk:no-bij-finite-paths} that every finite path $\lambda \in  {\mathcal B_\Lambda}$ corresponds to a unique morphism in $\Lambda$.  Using this correspondence and the homeomorphism $X_{\mathcal B_\Lambda} \cong \Lambda^\infty$ of Proposition \ref{pr:kgraph-bratteli-inf-path-spaces}, Equation \eqref{eq:kgraph_Measure} translates into the formula }
\begin{equation}\label{eq:M-measure}
M([\lambda]) = (\rho_1 \cdots \rho_t)^{-(q+1)} (\rho_{t+1} \cdots \rho_k)^{-q} x^\Lambda_{s(\lambda)}
\end{equation}
for $[\lambda] \subseteq X_{\mathcal B_\Lambda}$, where $\lambda\in F\mathcal{B}_\Lambda$ with $|\lambda| = qk +t$ and $x^\Lambda$ is the Perron-Frobenius eigenvector of $\Lambda$.

In the proof that follows, we rely heavily on the identification between $\Lambda^\infty$ and $X_{\mathcal{B}_\Lambda}$ by Proposition \ref{pr:kgraph-bratteli-inf-path-spaces} and Remark \ref{rmk:no-bij-finite-paths} (a).  We also use the observation from Remark \ref{rmk:no-bij-finite-paths} that every finite path in $F \mathcal{B}_\Lambda$ corresponds to a unique finite path $\lambda \in \Lambda$.

\begin{prop}
\label{pr:spec-rad-cantor}
Let $\Lambda$ be a finite, strongly connected $k$-graph with adjacency matrices $A_i$. Then the infinite path space $\Lambda^\infty$ is a Cantor set whenever $\prod_i \rho_i > 1$.
\end{prop}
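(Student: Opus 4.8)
The plan is to transport the question to the associated Bratteli diagram and apply Proposition \ref{pr:inf-path-cantor}. By Proposition \ref{pr:kgraph-bratteli-inf-path-spaces} we have a homeomorphism $\Lambda^\infty \cong X_{\mathcal{B}_\Lambda}$, so it suffices to check that $X_{\mathcal{B}_\Lambda}$ is a Cantor set, and I would do this by verifying criterion (3) of Proposition \ref{pr:inf-path-cantor}. First I would record that the hypotheses of that proposition hold for $\mathcal{B}_\Lambda$, i.e.\ that $\mathcal{B}_\Lambda$ has no sources and no sinks outside $\mathcal{V}_0$. In matrix terms, the number of edges entering a vertex $q \in \mathcal{V}_n$ from below is the $q$-th row sum of $A_i$ ($i \equiv n+1 \bmod k$), and the number of edges leaving $q$ toward $\mathcal{V}_{n-1}$ is the $q$-th column sum of $A_i$ ($i \equiv n \bmod k$); so the condition becomes that each $A_i$ has no zero row and no zero column. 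Since $\prod_i \rho_i > 1$ forces every $\rho_i > 0$, the relation $A_i x^\Lambda = \rho_i x^\Lambda$ with $x^\Lambda > 0$ gives $(A_i x^\Lambda)_v = \rho_i x^\Lambda_v > 0$, ruling out zero rows; applying the same reasoning to the commuting, irreducible transposed family $\{A_i^T\}$ and its (left) Perron--Frobenius vector rules out zero columns.

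For criterion (3) I must show that for every $n$ and every $v \in \mathcal{V}_n$ there is a vertex $w$ at some level $m \geq n$ admitting a path from $w$ to $v$ with $\#(r^{-1}(w)) \geq 2$, i.e.\ with at least two downward continuations. I would argue by contradiction: suppose some $v \in \mathcal{V}_n$ has the property that every such $w$ (every vertex below $v$ joined to $v$ by a path) satisfies $\#(r^{-1}(w)) \leq 1$. Since $\mathcal{B}_\Lambda$ has no sinks below level $0$, in fact $\#(r^{-1}(w)) = 1$ for every such $w$, so the sub-diagram hanging below $v$ is a single non-branching ray. Consequently the total number of finite paths of any length issuing from $v$ equals $1$.

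I would then turn this path count into a matrix estimate. The number of descending paths of length $sk$ starting at $v$ is $(A_{n+1}A_{n+2}\cdots A_{n+sk}\,\1)(v)$; because the $A_i$ commute and recur with period $k$, each block of $k$ consecutive factors is a cyclic reordering of $A_1,\dots,A_k$ and hence equals $B := A_1 A_2 \cdots A_k$, so this product is $B^s$. The no-branching hypothesis therefore gives $(B^s\1)(v) = 1$ for all $s \in \N$. On the other hand $B x^\Lambda = (\rho_1\cdots\rho_k)\,x^\Lambda$, and since $\1 \geq c\,x^\Lambda$ entrywise with $c := 1/\max_v x^\Lambda_v > 0$ and $B \geq 0$ preserves this order,
\[ (B^s\1)(v) \;\geq\; c\,(B^s x^\Lambda)(v) \;=\; c\Big(\prod_i \rho_i\Big)^s x^\Lambda_v \;\longrightarrow\; \infty \quad (s \to \infty), \]
which contradicts $(B^s\1)(v) = 1$. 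Hence criterion (3) holds and $X_{\mathcal{B}_\Lambda} \cong \Lambda^\infty$ is a Cantor set.

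The step I expect to be the main obstacle is exactly this last estimate. Because $B = A_1\cdots A_k$ need not be irreducible (let alone primitive) even though the family $\{A_i\}$ is irreducible, I cannot invoke Perron--Frobenius directly for $B$ to conclude that $(B^s\1)(v)$ grows. The device of dominating $\1$ from below by a multiple of the strictly positive \emph{common} eigenvector $x^\Lambda$ is what extracts exponential growth of the single $v$-entry from the eigenvalue $\prod_i \rho_i > 1$, and this is precisely where the hypothesis on the product of spectral radii enters in place of a primitivity assumption. A secondary point requiring care is the bookkeeping of edge directions, so that ``$\#(r^{-1}(w)) \geq 2$'' is correctly matched with downward branching and with row sums of the appropriate $A_i$.
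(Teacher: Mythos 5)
Your proof is correct, and it reaches the conclusion by a genuinely different mechanism than the paper's. Both arguments funnel through the same combinatorial configuration: a failure of the Cantor property produces (via Proposition \ref{pr:inf-path-cantor}) a vertex $v$ whose entire descending sub-diagram is a single non-branching ray --- in the paper this appears as an isolated point $x$ whose tails $\eta_n$ are the unique paths of their degree with range $s(\gamma_N)$. The proofs diverge in how they turn this ray into a contradiction with $\prod_i \rho_i > 1$. The paper argues by contraposition: from uniqueness of the ray it deduces that the sequence of sources $s(\eta_n)$ is eventually periodic, averages the corresponding indicator vectors over a period to obtain a non-negative eigenvector of $A^T$ (where $A = A_1 \cdots A_k$) with eigenvalue $1$, multiplies by a positive matrix $A_F$ supplied by strong connectivity to make this eigenvector strictly positive, and then invokes the subinvariance result (Lemma 3.2 of \cite{aHLRS}) to conclude $\rho(A) \leq 1$. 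You instead count paths: the ray forces $(B^s \1)(v) = 1$ for all $s$, while the domination $\1 \geq c\, x^\Lambda$ together with $B x^\Lambda = \bigl(\prod_i \rho_i\bigr) x^\Lambda$ forces $(B^s \1)(v) \geq c \bigl(\prod_i \rho_i\bigr)^s x^\Lambda_v \to \infty$. Your mechanism is shorter and more elementary: it needs only the positive common eigenvector $x^\Lambda$ (Proposition 3.1 of \cite{aHLRS}, already quoted in Section \ref{sec:strongly-conn}) and monotonicity of non-negative matrices, avoiding the periodicity and averaging steps, the positivity boost $A_F$, and the citation of Lemma 3.2 of \cite{aHLRS}; and, as you correctly observe, the domination trick is exactly what substitutes for any irreducibility or primitivity of $B$ itself, which is also the issue the paper's eigenvector construction is designed to circumvent. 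Two small blemishes, neither fatal: where you justify $\#(r^{-1}(w)) = 1$ you write that ``$\mathcal{B}_\Lambda$ has no sinks below level $0$,'' but in the paper's conventions (edges run from $\mathcal{V}_m$ to $\mathcal{V}_{m-1}$) a vertex receiving no edge from below is a \emph{source}, so the hypothesis you are actually using is ``no sources'' (no zero rows), exactly as your earlier row/column bookkeeping set up; and your transposed-family argument ruling out zero columns is fine (the transposed matrices form the adjacency matrices of the opposite $k$-graph, which is again finite and strongly connected), though it could be shortened by citing the standard structure theory of finite strongly connected $k$-graphs from \cite{aHLRS}.
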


\begin{proof}
 We let $A = A_1 \ldots A_k$; it is a matrix whose entries are indexed by $\Lambda^0 \times \Lambda^0$, and its spectral radius is $\prod_i \rho_i$.
 We assume that $\Lambda^\infty$ is not a Cantor set, and will prove that the spectral radius of $A$ is at most $1$, hence proving the Proposition.
 
 Since $\Lambda^\infty$ is compact Hausdorff and totally disconnected, {but not a Cantor set, }it has an isolated point $x$. We write $\{\gamma_n\}_{n \in \N}$ for the increasing sequence of finite paths in $\mathcal B_\Lambda$ which are sub-paths of $x$.  If $n = \ell k +t$, then $|\gamma_n| = n$ and (thinking of $\gamma_n$ as an element of $\Lambda$) $d(\gamma_n)=(\ell +1, \ldots, \ell +1, \ell , \ldots, \ell)$ with $t$ occurrences of $\ell +1$.
 Since $x$ is an isolated point, there exists $N \in \N$ such that for all $n \geq N$, $[\gamma_n] = \{ x\}$.  Without loss of generality, we can assume that $N=dk$ is a multiple of $k$, so that $d(\gamma_N) = (d, \ldots, d)$.
 For $n \geq N$, we write $\gamma_n = \gamma_N \eta_n$, with $|\gamma_n|=n$ and $|\eta_n|=n -N = qk+t$, so that $d( \eta_n)=(q+1, \ldots, q+1, q, \ldots, q)$, with $t$ occurrences of $q+1$. 
  
By Proposition \ref{pr:inf-path-cantor},
our hypothesis that $x$ is an isolated point implies that for all $n \geq N$, $\eta_{n}$ is the unique path of  degree $d(\eta_n)$ whose range is $s(\gamma_N) = r(\eta_n)$.
 This, in turn, implies that for all $n \geq N$, we have $A^q A_1 \ldots A_{t} (r(\eta_{n}), z)$  equal to $1$ for a single $z$, and $0$ otherwise.
 In other words, if we consider the column vector $\delta_{v}$ which is $1$ at the vertex $v$ and $0$ else, we have that
 \[
  \big(\delta_{r(\eta_{n})} \big)^T \cdot A^q A_1 \ldots A_t = \big( \delta_{s(\eta_{n})} \big)^T.
 \]

Note that for each $n \geq N$ with $n - N=qk+t$, 
$s(\eta_{n+1})$ is the label of the only non-zero entry in row  $s(\eta_n)$ of the matrix $A_{t}$.
Since each entry in the sequence $(s(\eta_n))_{n \in \N}$ is completely determined by a finite set of inputs -- namely, the previous entry in the sequence, and the entries of the matrices $A_t$ -- and the set $\Lambda^0$ of vertices is finite, the sequence $(s(\eta_n))_{n \in \N}$  is eventually periodic. 
 Let $p$ be a period for this sequence. Then $kp$ is also a period, so there exists $J$ such that for all $n \geq J$ 
 we have
 \[(A^{p})^T \delta_{s(\eta_{n})} = \delta_{s(\eta_{n})}.\]
{If we average along one period and define} 
 \[
  \vec v = \frac{1}{kp} \sum_{j=J+1}^{J + kp} \delta_{s(\eta_{j})},
 \]
 then we can compute that
 \[
  A^T \vec v = \frac{1}{kp} \sum_{j=J+1}^{J+kp} \delta_{s(\eta_{j})}  = \vec{v},
 \]
 so $\vec v$ is an eigenvector of $A^T$ with eigenvalue $1$, with non-negative entries.

Since $\Lambda$ is strongly connected by hypothesis, Lemma 4.1 of \cite{aHLRS} implies that there exists a matrix $A_F$ which is a finite sum of finite products of the matrices $A_i$ and which has positive entries. This matrix $A_F$ commutes with $A$, and therefore
 \[
  A^T A_F^T \vec v = A_F^T A^T \vec v = A_F^T \vec v,
 \]
 and so $\vec u := A_F^T \vec v$ is an eigenvector of $A^T$ with eigenvalue $1$.
 Since $A_F$ is positive and $\vec v$ is non-negative, $\vec u$ is positive. Therefore, we can apply Lemma~3.2 of~\cite{aHLRS} and conclude that $\prod_i \rho_i= \rho(A) \leq 1$.
\end{proof}

\begin{rmk}
The proof of Proposition \ref{pr:spec-rad-cantor} simplifies considerably if we add the hypothesis that each row sum of each adjacency matrix $A_i$ is at least 2.  In this case, any finite path $\gamma$ in the Bratteli diagram has at least two extensions $\gamma  e$ and $\gamma  f$. In terms of neighbourhoods, this means that each clopen set  $[\gamma]$ contains at least two disjoint non-trivial sets $[\gamma e], [\gamma f]$. It is therefore impossible to have a cylinder set $[\gamma]$ consist of a single point. Therefore, there is no isolated point in $X_{\mathcal B_\Lambda}$, and the path space is a Cantor set.
\end{rmk}

The next  Proposition constructs, for any $\delta \in (0,1)$, a weight $w_\delta$ on the stationary $k$-Bratteli diagram $\mathcal B_\Lambda$ of any $k$-graph $\Lambda$ which satisfies certain mild hypotheses. In Section \ref{sec:zeta-regular} below, we will examine the Pearson-Bellissard spectral triples associated to the ultrametric Cantor sets $(X_{\mathcal B_\Lambda}, d_{w_\delta})$ and in particular the relationship between the parameter $\delta$ and various properties of the spectral triple.  For example, Corollary \ref{cor:spec-dim} establishes that the spectral triple associated to $(X_{\mathcal B_\Lambda}, d_{w_\delta})$   has spectral dimension $\delta$, while Theorem \ref{thm:dixmier-aHLRS} shows that the measure on $X_{\mathcal B_\Lambda}$  induced by the spectral triple is independent of $\delta$.}

\begin{prop}
\label{pr:delta-weight}
Let $\Lambda$ be a finite, strongly connected $k$-graph with adjacency matrices $A_i$. For $\eta\in F\mathcal{B}_\Lambda$ with $|\eta|=n\in \N$, write $n = qk + t$ for some $q, t \in \N$ with $	0 \leq t \leq k-1$.  For each $\delta \in (0,1),$ define $w_\delta: F\mathcal{B}_\Lambda \to \R^+$ by 
\begin{equation}
w_\delta(\eta) = \left(\rho_1^{q +1} \cdots \rho_t^{q+1} \rho_{t+1}^q \cdots \rho_k^q \right)^{-1/\delta} x^\Lambda_{s(\eta)},
\label{eq:w-delta}
\end{equation}
where $x^\Lambda$ is the unimodular Perron-Frobenius eigenvector for $\Lambda$.
If the spectral radius $\rho_i$ of $A_i$ satisfies $\rho_i > 1 \ \forall \ i$, then $w_\delta$ is a weight on $\mathcal{B}_\Lambda$.
\end{prop}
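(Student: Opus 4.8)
The plan is to verify directly the three defining conditions of a weight in Definition~\ref{def:weight}, after recording that positivity of $w_\delta$ is immediate since each $\rho_i > 0$ and the Perron--Frobenius eigenvector $x^\Lambda$ has strictly positive entries, so $w_\delta$ indeed maps into $\R^+$. It is convenient to rewrite \eqref{eq:w-delta} as $w_\delta(\eta) = \rho(\Lambda)^{-d(\eta)/\delta}\, x^\Lambda_{s(\eta)}$, where $d(\eta) = (q+1, \ldots, q+1, q, \ldots, q)$ (with $t$ entries equal to $q+1$) is the degree of the ``rainbow path'' $\eta \in \Lambda$ associated to $\eta \in F\mathcal{B}_\Lambda$ via Remark~\ref{rmk:no-bij-finite-paths}.

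For the first condition, I would observe that a path of length $0$ is a vertex $v \in \mathcal{V}_0 \cong \Lambda^0$, for which $q = t = 0$ and $s(v) = v$, so that $w_\delta(v) = x^\Lambda_v$; summing over $\mathcal{V}_0$ and using that $x^\Lambda$ is unimodular gives $\sum_{v \in \mathcal{V}_0} w_\delta(v) = 1 \le 1$. For the second condition, I would bound $x^\Lambda_{s(\eta)}$ by $\max_v x^\Lambda_v$ and note that for $|\eta| = n = qk + t$ we have $\rho(\Lambda)^{d(\eta)} \ge (\min_i \rho_i)^n$; since the hypothesis $\rho_i > 1$ forces $\min_i \rho_i > 1$, the factor $\rho(\Lambda)^{-d(\eta)/\delta}$ tends to $0$ as $n \to \infty$, uniformly in the choice of $\eta \in F^n\mathcal{B}_\Lambda$.

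The substantive point, and the step I expect to be the main obstacle, is the third (strict monotonicity) condition. Here I would first reduce to a single-edge extension $\lambda = \eta e$, writing a general sub-path relation $\lambda = \eta\gamma$ as a chain of one-edge extensions and invoking transitivity of the strict inequality (consistent with the remark following Definition~\ref{def:weight}). For a single edge $e \in \mathcal{E}_{n+1}$ I would identify its color $i$ (namely $i = t+1$ if $t < k-1$, and $i = k$ if $t = k-1$), observe that appending $e$ increases the degree by $e_i$, and hence that $\rho(\Lambda)^{d(\lambda)} = \rho_i\,\rho(\Lambda)^{d(\eta)}$. This yields the clean ratio
\[ \frac{w_\delta(\lambda)}{w_\delta(\eta)} = \rho_i^{-1/\delta}\,\frac{x^\Lambda_{s(\lambda)}}{x^\Lambda_{s(\eta)}}, \]
so that it remains to establish $x^\Lambda_{s(\lambda)} < \rho_i^{1/\delta}\, x^\Lambda_{s(\eta)}$.

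To close this, I would use the eigenvector relation $A_i x^\Lambda = \rho_i x^\Lambda$ together with the composability of paths in $\mathcal{B}_\Lambda$, which gives $r(e) = s(\eta)$ and $s(e) = s(\lambda)$, hence $A_i(s(\eta), s(\lambda)) \ge 1$. Since all entries of $A_i$ and $x^\Lambda$ are non-negative, this produces $\rho_i\, x^\Lambda_{s(\eta)} = (A_i x^\Lambda)_{s(\eta)} \ge A_i(s(\eta), s(\lambda))\, x^\Lambda_{s(\lambda)} \ge x^\Lambda_{s(\lambda)}$. Finally, because $\delta \in (0,1)$ we have $1/\delta > 1$, and the hypothesis $\rho_i > 1$ then forces $\rho_i^{1/\delta} > \rho_i$; combining this with $x^\Lambda_{s(\eta)} > 0$ upgrades the weak bound $x^\Lambda_{s(\lambda)} \le \rho_i\, x^\Lambda_{s(\eta)}$ to the required strict inequality. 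This is precisely where both hypotheses --- $\delta < 1$ and $\rho_i > 1$ --- are used, and it is the one place where care is needed to extract a strict rather than merely weak decrease.
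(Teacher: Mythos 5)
Your proof is correct and follows essentially the same route as the paper: verify the three conditions of Definition \ref{def:weight}, reduce the third (strict monotonicity) condition to single-edge extensions, and settle it using the eigenvector relation $A_i x^\Lambda = \rho_i x^\Lambda$ together with the fact that $\rho_i^{1-1/\delta} < 1$ when $\rho_i > 1$ and $\delta \in (0,1)$. The only cosmetic difference is that the paper sums $w_\delta(\lambda f)$ over \emph{all} edges $f$ of the relevant color extending $\lambda$ and evaluates that sum exactly as $\rho_{j+1}^{1-1/\delta} w_\delta(\lambda)$, whereas you extract the single-entry bound $x^\Lambda_{s(\lambda)} \leq A_i(s(\eta),s(\lambda))\, x^\Lambda_{s(\lambda)} \leq \rho_i\, x^\Lambda_{s(\eta)}$ from the same eigenvector identity --- the identical estimate, read off one term at a time.
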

\begin{proof}
Recall that $x^\Lambda \in (0,\infty)^{\Lambda^0}$, $\sum_{v\in\Lambda^0} x^\Lambda_v=1$ and $A_i x^\Lambda=\rho_i x^\Lambda$ for all $1\le i\le k$; thus, 
\[ \sum_{v \in \mathcal V_0} w_\delta(v) = \sum_{v\in \mathcal V_0} x^\Lambda_v =1,\]
 and the first condition of Definition \ref{def:weight} is satisfied.  Since  $\rho_i > 1$ for all $i$ and $0 < \delta < 1$, 
\[ \lim_{q\to \infty} (\rho_i^q)^{-1/\delta} = \lim_{q \to \infty} \left( \frac{1}{\rho_i^{1/\delta}} \right)^q = 0.\]
Thus the second condition of Definition \ref{def:weight} holds.  To see the third condition, we observe that it is enough to show that $w_\delta(\lambda) > w_\delta(\lambda f)$ for any edge $f$  with $s(\lambda)=r(f)$.    Note that if $|\lambda| = qk + j$ for $q\in \N$ and $0\le j\le k-1$, so that $s(\lambda) \in \mathcal{V}_{qk + j}$, then 
\begin{align*}
 \sum_{\stackrel{f: r(f) = s(\lambda)}{d(f) = e_{j+1}}}  w_\delta(\lambda f) &  = \left( ( \rho_1 \cdots \rho_k)^q \rho_1 \ldots \rho_{j +1}\right)^{-1/\delta} \sum_{v \in \Lambda^0} A_{j+1}(s(\lambda)), v) x^\Lambda_v \\
 &= \left( ( \rho_1 \cdots \rho_k)^q \rho_1 \ldots \rho_{j} \right)^{-1/\delta} \rho_{j+1}^{-1/\delta}\rho_{j+1}  x^\Lambda_{s(\lambda)} \\
 & < w_\delta(\lambda).
 \end{align*}
 Here the second equality follows since $x^\Lambda$ is an eigenvector for $A_{j+1}$ with eigenvalue $\rho_{j+1}$, and the final inequality holds because  $\rho_{j+1} > 1$ and $1/\delta > 1,$ and consequently 
 \[\rho_{j+1}^{1-1/\delta} =\frac{1}{\rho_{j+1}^{1/\delta-1}} < 1.\]
\end{proof}
Our primary application for the results of this section is the following.

\begin{cor}\label{cor:ultrametric-Cantor}
Let $\Lambda$ be a finite, strongly connected $k$-graph with adjacency matrices $A_i$ and let $\rho_i$ be the spectral radius for $A_i$, $1\le i\le k$. Suppose that $\rho_i >1$ for all $1\le i\le k$. Let $(\mathcal{B}_\Lambda, w_\delta)$ be the associated weighted stationary $k$-Bratteli diagram given in Proposition~\ref{pr:delta-weight}. Then the infinite path space $X_{\mathcal{B}_\Lambda}$ is an ultrametric Cantor set with the metric $d_{w_\delta}$ induced by the weight $w_\delta$.
\end{cor}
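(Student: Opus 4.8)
The plan is to obtain the corollary by chaining together three results already established in this section, with the key numerical observation being that the blanket hypothesis $\rho_i > 1$ for every $i$ is precisely what is needed to feed each of them. I would first record the trivial but crucial remark that $\rho_i > 1$ for all $i$ forces $\prod_{i} \rho_i > 1$, so that the hypothesis of Proposition~\ref{pr:spec-rad-cantor} is satisfied.

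First I would invoke Proposition~\ref{pr:spec-rad-cantor} to conclude that $\Lambda^\infty$ is a Cantor set, and then transport this conclusion across the homeomorphism $\Lambda^\infty \cong X_{\mathcal B_\Lambda}$ supplied by Proposition~\ref{pr:kgraph-bratteli-inf-path-spaces}. Since the property of being a Cantor set (totally disconnected, compact, perfect) is purely topological, it passes through a homeomorphism, and hence $X_{\mathcal B_\Lambda}$ is a Cantor set as well. Second, again using $\rho_i > 1$ for all $i$, Proposition~\ref{pr:delta-weight} guarantees that $w_\delta$ is genuinely a weight on $\mathcal B_\Lambda$ in the sense of Definition~\ref{def:weight}. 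Third, now that $X_{\mathcal B_\Lambda}$ is known to be a Cantor set and $w_\delta$ is known to be a weight, Proposition~\ref{pr:weight-ultrametric} applies directly to yield that $d_{w_\delta}$ is an ultrametric on $X_{\mathcal B_\Lambda}$ which metrizes the cylinder-set topology. Assembling these three facts, $(X_{\mathcal B_\Lambda}, d_{w_\delta})$ is an ultrametric Cantor set, as claimed.

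The substantive content underlying this corollary has already been discharged in the cited propositions, so I do not anticipate any genuine obstacle at this stage: the proof is essentially a bookkeeping check that the hypotheses line up. The only point deserving a word of care is the mild mismatch of notation, namely that Proposition~\ref{pr:spec-rad-cantor} phrases the Cantor-set conclusion in terms of $\Lambda^\infty$ whereas the corollary is stated for $X_{\mathcal B_\Lambda}$; this is exactly why the appeal to the homeomorphism of Proposition~\ref{pr:kgraph-bratteli-inf-path-spaces} is needed. The real work, the Perron--Frobenius argument bounding the spectral radius, resides in Proposition~\ref{pr:spec-rad-cantor}, and the verification of the three weight axioms resides in Proposition~\ref{pr:delta-weight}.
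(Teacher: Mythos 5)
Your proposal is correct and matches the paper's proof exactly: the paper's argument is literally "Combine Proposition~\ref{pr:delta-weight}, Proposition~\ref{pr:spec-rad-cantor}, and Proposition~\ref{pr:weight-ultrametric}," and you have simply spelled out the bookkeeping (that $\rho_i>1$ for all $i$ gives $\prod_i\rho_i>1$, and that the Cantor property transfers across the homeomorphism $\Lambda^\infty\cong X_{\mathcal B_\Lambda}$ of Proposition~\ref{pr:kgraph-bratteli-inf-path-spaces}) that the paper leaves implicit.
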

\begin{proof}
Combine Proposition \ref{pr:delta-weight}, Proposition \ref{pr:spec-rad-cantor}, and Proposition \ref{pr:weight-ultrametric}.
\end{proof}

\section{Spectral triples 
for ultrametric higher-rank graph Cantor sets}
\label{sec:zeta-regular}
{Proposition~8 of \cite{pearson-bellissard} (also see Proposition~3.1 of \cite{julien-savinien}) gives a recipe for constructing an even spectral triple for any ultrametric Cantor set induced by a weighted tree.  We begin this section by explaining how this construction works in the case of the ultrametric Cantor sets which we 
associated to a  finite strongly connected $k$-graph  in the previous section. Section \ref{sec:spectraltrip-zeta} recalls basic facts about spectral triples, and Section \ref{sec:zeta-dixmier} investigates the  $\zeta$-function of the spectral triples coming from the ultrametric Cantor sets that arise from $k$-graphs.  Finally, Section \ref{sec:dixmier} uses the theory of Dixmier traces to construct  measures on $X_{\mathcal B_\Lambda}$ from these spectral triples.  We also derive an integral formula for the Dixmier trace in this section. 

{To be precise, consider the Cantor set $\Lambda^\infty \cong X_{\mathcal B_\Lambda}$ with the ultrametric induced by the weight $w_\delta$ of Equation \eqref{eq:w-delta}.}  (Because of Proposition \ref{pr:kgraph-bratteli-inf-path-spaces}, we will identify the infinite path spaces of $\Lambda$ and of $\mathcal B_\Lambda$, and use either $\Lambda^\infty$ or $X_{\mathcal B_\Lambda}$ to denote this space, depending on the context.)
 Under additional (but mild) hypotheses, Theorem \ref{thm:abscissa-conv} establishes that the $\zeta$-function of the associated spectral triple has abscissa of convergence $\delta$, {and thus is finitely summable with  dimension $\delta$}.  After proving in Proposition \ref{pr:dixmier-trace-measure} that the Dixmier trace of the spectral triple induces a well-defined measure $\mu_{\delta}$ on $X_{\mathcal{B}_\Lambda}$, Theorem \ref{thm:dixmier-aHLRS} establishes that the normalization $\nu_{\delta}$ of  $\mu_{\delta}$ agrees with the measure $M$ introduced in \cite{aHLRS} and used in  \cite{FGKP} to construct a  wavelet decomposition of $L^2(\Lambda^\infty, M)$, and is therefore independent of $\delta$. 
 {Finally, Theorems  \ref{thm:Dixmier-trace} and  \ref{thm:Dixmier-trace-final} establish a  Dixmier trace integral formula;}  the computations underlying these proofs also establish that the ultrametric Cantor set $(X_{\mathcal B_\Lambda}, d_\delta)$ is $\zeta$-regular in the sense of \cite{pearson-bellissard}.

Analogues of Theorem \ref{thm:abscissa-conv} and Proposition  \ref{pr:dixmier-trace-measure} were proved in Section 3 of \cite{julien-savinien} for stationary Bratteli diagrams (equivalently, directed graphs) with primitive adjacency matrices.  However, even for directed graphs our results in this section are stronger than those of \cite{julien-savinien}, since in this setting, our hypotheses are equivalent to saying  that the adjacency matrix is merely irreducible.

{\color{black}
A crucial hypothesis for the 
 {main results in this section} is the following Hypothesis \ref{hyp:diam-equal-eright}, which will be a standing hypothesis throughout the paper.  Lemma \ref{lem:weight-diam} below identifies conditions under which the weights $w_\delta$ of Equation \eqref{eq:w-delta} satisfy Hypothesis \ref{hyp:diam-equal-eright}.  To state this hypothesis, recall that for any Bratteli diagram $(\mathcal B, w)$ and $\lambda \in F\mathcal B$, 
\begin{equation} \text{diam}[\lambda]=\sup\{d_w(x,y)\mid x,y\in [\lambda]\}.
\label{eq:diam}
\end{equation}

\begin{hypothesis}
\label{hyp:diam-equal-eright}
 The weight $w$ of a  weighted Bratteli diagram $(\mathcal{B},w)$   satisfies 
\begin{equation}\label{eq:weight-diam}
{w(\lambda) = \text{diam}[\lambda]\quad \text{for all } \lambda\in F\mathcal{B}.}
\end{equation}
\label{hypoth}
\end{hypothesis} 

\begin{lemma}
\label{lem:weight-diam}
Let $\mathcal B = \mathcal{B}_\Lambda$ for a finite, strongly connected $k$-graph $\Lambda$ with no sources.  {Hypothesis~\ref{hyp:diam-equal-eright} holds for the weights $w_\delta$ of Equation \eqref{eq:w-delta} if and only if every vertex $a \in \Lambda^0$ receives at least two edges of each color, i.e.~$\sum_{b\in \Lambda^0} A_i(a,b)\ge 2$ for all $a\in \Lambda^0$ and $1\le i\le k$. } 
\end{lemma}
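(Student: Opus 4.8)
The plan is to characterize $\text{diam}[\lambda]$ geometrically and thereby reduce Hypothesis~\ref{hyp:diam-equal-eright} to a branching condition at every vertex of $\mathcal{B}_\Lambda$. First I would record an inequality valid for \emph{every} weighted Bratteli diagram: if $x,y\in[\lambda]$, then $\lambda$ is a sub-path of $x\wedge y$, so the third condition of Definition~\ref{def:weight} gives $d_{w_\delta}(x,y)=w_\delta(x\wedge y)\le w_\delta(\lambda)$, whence $\text{diam}[\lambda]\le w_\delta(\lambda)$ always. Equality $\text{diam}[\lambda]=w_\delta(\lambda)$ holds precisely when there exist $x,y\in[\lambda]$ with $x\wedge y=\lambda$: if such paths exist then $d_{w_\delta}(x,y)=w_\delta(\lambda)$ forces equality, while if $\lambda$ has a unique one-edge extension $\lambda g$, then $[\lambda]=[\lambda g]$ and $\text{diam}[\lambda]=\text{diam}[\lambda g]\le w_\delta(\lambda g)<w_\delta(\lambda)$ by the same inequality applied to $\lambda g$. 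Thus Hypothesis~\ref{hyp:diam-equal-eright} is equivalent to the statement that every finite path $\lambda\in F\mathcal{B}_\Lambda$ admits two distinct one-edge extensions, each continuing to an infinite path.

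For the ``if'' direction I would argue directly. Given $\lambda$ of length $\ell$, the next edge lives in $\mathcal{E}_{\ell+1}$ and has color $i$ with $i\equiv\ell+1\pmod k$, so the number of one-edge extensions of $\lambda$ is the row sum $\sum_b A_i(s(\lambda),b)$. The hypothesis that this is $\ge 2$ yields distinct edges $e\ne f$ with range $s(\lambda)$; since $\Lambda$ has no sources, each of $\lambda e,\lambda f$ extends to an infinite path, producing $x,y\in[\lambda]$ with $x\wedge y=\lambda$. Hence $\text{diam}[\lambda]=w_\delta(\lambda)$, and as $\lambda$ was arbitrary, Hypothesis~\ref{hyp:diam-equal-eright} follows.

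For the ``only if'' direction I would prove the contrapositive. If some vertex $a$ receives fewer than two color-$i$ edges, then by no sources it receives exactly one, and I would seek a finite path $\lambda\in F\mathcal{B}_\Lambda$ with $s(\lambda)=a$ at a level $\ell\equiv i-1\pmod k$; such a $\lambda$ has a unique one-edge extension, so $\text{diam}[\lambda]<w_\delta(\lambda)$ by the paragraph one analysis, contradicting Hypothesis~\ref{hyp:diam-equal-eright}. The main obstacle is exactly the existence of this $\lambda$: I must realize $a$ as the \emph{source} of a Bratteli path of prescribed length mod $k$, a reachability statement that needs the ``dual'' of the no-sources hypothesis. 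I would supply it by first showing that a finite strongly connected $k$-graph with no sources also has no sinks, i.e.\ every vertex emits at least one edge of each color. Indeed, given a color-$i$ edge $g$ with $r(g)=a$ (which exists by no sources) and a morphism $h\in s(g)\Lambda a$ (which exists by strong connectivity), the composite $gh$ is a loop at $a$ with $d(gh)_i\ge 1$; factoring off a degree-$e_i$ morphism at the source end via the factorization property exhibits a color-$i$ edge emitted by $a$. With ``no sinks'' established, reachability is routine: descending from $a$ at level $\ell$, at each stage the current vertex emits an edge of the required color, so one builds a path all the way down to $\mathcal{V}_0$; choosing $\ell\equiv i-1\pmod k$ completes the argument.
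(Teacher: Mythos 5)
Your proof is correct and follows essentially the same route as the paper's: the upper bound $\mathrm{diam}[\lambda]\le w_\delta(\lambda)$ from the third weight condition, the observation that equality at $\lambda$ amounts to the existence of two distinct one-edge extensions supporting infinite paths, the forward direction via two edges $e\neq f$ with range $s(\lambda)$, and the converse via a path forced to continue through the unique available edge. The one place you go beyond the paper is in the converse: the paper simply writes down a path $\lambda$ whose source is the offending vertex $a$ at a level where the next edge must have color $i$, without justifying that such a $\lambda\in F\mathcal{B}_\Lambda$ (i.e.\ one reaching back to $\mathcal{V}_0$) exists. Your auxiliary argument --- that a finite, strongly connected $k$-graph with no sources also has no sinks (compose a received color-$i$ edge $g$ with $h\in s(g)\Lambda a$ and factor off a degree-$e_i$ morphism at the source end), and hence every vertex is the source of a Bratteli path of any prescribed length --- supplies exactly the reachability step the paper leaves implicit, so your write-up is in fact slightly more complete than the published proof.
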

\begin{proof}
Recall that, by definition of $d_{w_\delta}$ and the third condition of Definition \ref{def:weight}, 
\[\text{diam}[\lambda] = \max \{ d_{w_\delta}(x, y): x, y\in [\lambda]\} = \max \{ w_\delta(x \wedge y): x, y \in [\lambda] \} \leq w_\delta(\lambda).\]
Moreover, the hypothesis that $\Lambda$ be source-free forces each vertex $a$ to receive at least one edge of each color.

Suppose, then, that 
every vertex $a \in \Lambda^0$ receives at least two edges $e_a, f_a$ of each color. 
Then for any $\lambda \in F\mathcal B_\Lambda$ with $s(\lambda) = a$, there are then two infinite paths $x = \lambda e_a \cdots, y = \lambda f_a \cdots$ in $[\lambda]$ such that $d_{w_\delta}(x, y) = w_\delta(x \wedge y) = w_\delta(\lambda)$.  Conversely, if there is a vertex $a$ and a color $i$ such that there is only one edge $e$ of color $i$ and range $a$, then for any $x, y \in [\lambda]$ we have $x \wedge y = \lambda e$ and hence 
\[ w_\delta(\lambda) > w_\delta(\lambda e) \geq \text{diam}[\lambda]. \qedhere \]
\end{proof}
\begin{rmk}
Recall  that the spectral radius of a non-negative matrix is at least the minimum of its row sums.  It follows that if $(\mathcal B_\Lambda, w_\delta)$ satisfies Hypothesis \ref{hyp:diam-equal-eright}, then $\rho_i \geq 2 >1$ for all $1\le i\le k$, and hence $\rho=\rho_1\ldots \rho_k>1$. Therefore,  the function $w_\delta$ given in Equation \eqref{eq:w-delta} is automatically a weight when it satisfies Equation \eqref{eq:weight-diam} (and hence Hypothesis \ref{hyp:diam-equal-eright}).  In this setting, $w_\delta$ also gives rise to an ultrametric Cantor set $(X_{\mathcal{B}_\Lambda}, d_{w_\delta})$ by Corollary~\ref{cor:ultrametric-Cantor}.
\end{rmk}

\subsection{A review of spectral triples on Cantor sets and and the associated $\zeta$-functions }
\label{sec:spectraltrip-zeta}

We  begin by recalling the definitions of a pre-$C^*$-algebra and of a spectral triple we use in our paper; see \cite{connes},  \cite[Chapter 10]{G-B}.

\begin{defn} (\cite[Section IV $\gamma$]{connes}) A {\em pre-$C^*$-algebra} of a $C^*$-algebra $A$ is a $*$-subalgebra $\mathcal{A}$  of $A$, which is stable
under the holomorphic functional calculus of $A$.
\end{defn}
Pre-$C^*$-algebras are called local $C^*$-algebras in \cite{blackadar}.
By \cite[page 450]{pearson-bellissard}, the $*$-algebra  $C_{\text{Lip}}(X_{\mathcal{B}}) \subseteq C(X_{\mathcal{B}})$ of Lipschitz continuous functions on $(X_{\mathcal{B}}, d_w)$ is a pre-$C^*$-algebra of the $C^*$-algebra $C(X_{\mathcal{B}})$. 

\

\begin{defn}(cf.~\cite[Definition 9.16]{G-B}, \cite[Definition 9]{pearson-bellissard})
	\label{def:spectral-triple}  
A {\it spectral triple} is a triple $(\mathcal{A}, \mathcal{H}, D)$ consisting of:
\begin{itemize}
	\item a pre-$C^*$-algebra $\mathcal{A} \subseteq  A$ (with $\mathcal{A}$ and $  A$ unital) equipped with a faithful $*$-representation $\pi$ of $\mathcal A$  by bounded operators on a Hilbert space $\mathcal{H}$; and
\item  a selfadjoint operator $D$ on $\mathcal{H}$, with dense domain 
$Dom \,D \subseteq  \mathcal{H}$, such that 
\[ a\, (Dom\, D) \subseteq  Dom \, D,\  \forall a \in \mathcal{A};\]
 the operator $[D,a]$, defined initially on $Dom\, D$, extends to a bounded operator on $ \mathcal{H}$ for all 
$ a \in \mathcal{A}$; and 
 $D$ has compact resolvent.
\end{itemize}
 A spectral triple is {\it even}  if it has an associated grading operator $\Gamma: \mathcal{H}\to \mathcal{H}$
satisfying:
\[\begin{split}
\Gamma^*=\Gamma; \ \  \Gamma^2&=1;  \ \  \Gamma D = - D \Gamma; \ \ \Gamma \pi(a) = \pi(a) \Gamma,\ \forall a \in \mathcal{A}.
 \end{split}
 \]
\end{defn}

We now review the construction of the spectral triple associated to an ultrametric  Cantor set  from \cite{pearson-bellissard} (see also  Section 3 of \cite{julien-savinien}). 

\begin{defn}
\label{def:choice}
Let $(\mathcal{B}, w)$ be a weighted Bratteli diagram  satisfying Hypothesis \ref{hyp:diam-equal-eright} with $X_{\mathcal{B}}$ a Cantor set.  Let $(X_{\mathcal{B}}, d_w)$ be the associated ultrametric Cantor space. A \emph{choice function} for $(X_{\mathcal{B}}, d_w)$ is a map $\tau:F\mathcal{B}\to  X_{\mathcal{B}}\times X_{\mathcal{B}}$ such that $\tau(\gamma)=(\tau_{+}(\gamma), \tau_{-}(\gamma))\in [\gamma]\times [\gamma]$ and $d_w(\tau_{+}(\gamma),\tau_{-}(\gamma))=\text{diam}\,[\gamma]$.
We denote by $\Upsilon$ the set of choice functions for $(X_{\mathcal{B}}, d_w)$. Note that  $\Upsilon$ is nonempty whenever $X_{\mathcal{B}}$ is a Cantor set, %
because Condition (3) of Proposition \ref{pr:inf-path-cantor} implies that for every finite path $\gamma$ of $\mathcal{B}$ we can find two distinct infinite paths $x,y\in [\gamma]$  with $x \wedge y = \gamma$.
\end{defn}

As in \cite{pearson-bellissard, julien-savinien}, let $C_{\text{Lip}}(X_{\mathcal{B}})$ be the pre-$C^*$-algebra of Lipschitz continuous functions on $(X_{\mathcal{B}}, d_w)$ and let $\mathcal{H}=\ell^2(F\mathcal{B}, \C^2)$. For $\tau\in \Upsilon$, we define a faithful $\ast$-representation $\pi_\tau$ of $C_{\text{Lip}}(X_{\mathcal{B}})$ on $\H$ by
\begin{equation}
\label{eq:representation}
\pi_\tau(f)=\bigoplus_{\gamma\in F\mathcal{B}}\begin{pmatrix} f(\tau_{+}(\gamma)) & 0 \\ 0 & f(\tau_{-}(\gamma)) \end{pmatrix}.
\end{equation}
A Dirac operator $D$ on $\mathcal{H}$ is given by
\[
D=\bigoplus_{\gamma\in F\mathcal{B}}\frac{1}{\text{diam}[\gamma] }\begin{pmatrix} 0 & 1\\ 1 & 0\end{pmatrix}, 
\]
and the grading operator $\Gamma$ is given by 
\[
\Gamma=1_{\ell^2(F\mathcal{B})}\otimes \begin{pmatrix} 1&0 \\ 0 & -1\end{pmatrix}.
\]

The following results were established by Pearson and Bellissard \cite{pearson-bellissard}.

\begin{prop}\cite[Proposition 8]{pearson-bellissard}
\label{def-our-spectral-triples} Let $(\mathcal{B}, w)$ be a weighted Bratteli diagram with $X_{\mathcal{B}}$ a Cantor set,   satisfying Hypothesis \ref{hyp:diam-equal-eright}.  Then $(C_{\text{Lip}}(X_{\mathcal{B}}), \ell^2(F\mathcal{B}, \C^2), \pi_\tau, D, \Gamma)$ is an even spectral triple for all $\tau\in \Upsilon$. 
\end{prop}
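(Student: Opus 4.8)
The plan is to verify, one property at a time, the clauses of Definition \ref{def:spectral-triple} for the explicit data $(C_{\text{Lip}}(X_{\mathcal{B}}), \ell^2(F\mathcal{B}, \C^2), \pi_\tau, D, \Gamma)$. The pre-$C^*$-algebra and unitality requirements are immediate: $C_{\text{Lip}}(X_{\mathcal{B}}) \subseteq C(X_{\mathcal{B}})$ is a pre-$C^*$-algebra by the discussion following the definition of pre-$C^*$-algebra (citing \cite[page 450]{pearson-bellissard}), and both algebras are unital since $X_{\mathcal{B}}$ is compact. I would first record that $\pi_\tau$ is a $*$-homomorphism, being a block-diagonal multiplication operator, and that it is bounded with $\|\pi_\tau(f)\| \leq \|f\|_\infty$. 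For faithfulness, the point is that the collection $\{\tau_+(\gamma), \tau_-(\gamma) : \gamma \in F\mathcal{B}\}$ is dense in $X_{\mathcal{B}}$, since $\tau_\pm(\gamma) \in [\gamma]$ and the cylinders form a neighborhood basis; hence a continuous $f$ annihilated by $\pi_\tau$ vanishes on a dense set and so is zero.

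Next I would treat $D$. Self-adjointness is clear blockwise, as each block is the real scalar $\text{diam}[\gamma]^{-1}$ times the self-adjoint flip $\begin{pmatrix} 0 & 1 \\ 1 & 0 \end{pmatrix}$, and its natural domain $\{\xi : \sum_\gamma \text{diam}[\gamma]^{-2}\,\|\xi_\gamma\|^2 < \infty\}$ is dense because it contains the finitely supported vectors. For the compact resolvent, the eigenvalues of $D$ are $\pm\,\text{diam}[\gamma]^{-1}$, and the decisive observation is that the weight condition $\lim_{n\to\infty}\sup\{w(\lambda):\lambda\in F^n\mathcal{B}\}=0$ of Definition \ref{def:weight}, combined with Hypothesis \ref{hyp:diam-equal-eright} (which gives $\text{diam}[\gamma]=w(\gamma)$), forces $\text{diam}[\gamma]^{-1}\to\infty$: for every $\epsilon>0$ only finitely many $\gamma$ satisfy $\text{diam}[\gamma]>\epsilon$, since each $F^n\mathcal{B}$ is finite and the supremum over length-$n$ paths tends to $0$. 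Thus the spectral projection of $|D|$ above any threshold is finite rank, and $D$ has compact resolvent. Invariance of $\text{Dom}\,D$ under $\pi_\tau(f)$ is then a one-line estimate, as $\pi_\tau(f)$ is block-diagonal with block norms at most $\|f\|_\infty$.

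The heart of the argument, and the step I expect to be the main obstacle, is the boundedness of $[D, \pi_\tau(f)]$. I would compute it blockwise: writing $a=f(\tau_+(\gamma))$ and $b=f(\tau_-(\gamma))$, a direct $2\times 2$ computation yields
\[
[D, \pi_\tau(f)]_\gamma = \frac{f(\tau_-(\gamma)) - f(\tau_+(\gamma))}{\text{diam}[\gamma]} \begin{pmatrix} 0 & 1 \\ -1 & 0 \end{pmatrix},
\]
whose operator norm equals $|f(\tau_+(\gamma))-f(\tau_-(\gamma))|\,/\,\text{diam}[\gamma]$. Here the two defining features of the construction conspire. Because $\tau$ is a choice function (Definition \ref{def:choice}) one has $d_w(\tau_+(\gamma),\tau_-(\gamma))=\text{diam}[\gamma]$, and because $f$ is Lipschitz the numerator is at most $\text{Lip}(f)\,d_w(\tau_+(\gamma),\tau_-(\gamma)) = \text{Lip}(f)\,\text{diam}[\gamma]$. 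Hence each block has norm at most $\text{Lip}(f)$, and since $\|[D,\pi_\tau(f)]\|$ is the supremum of the block norms, the commutator is bounded by $\text{Lip}(f)$. This is precisely the point at which the Lipschitz hypothesis on $\mathcal{A}$ and the choice-function realization of the diameter are indispensable, so I regard it as the crux of the proof.

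Finally, I would dispatch the even structure by trivial blockwise checks on $\Gamma = 1_{\ell^2(F\mathcal{B})} \otimes \begin{pmatrix} 1 & 0 \\ 0 & -1 \end{pmatrix}$: it is self-adjoint with $\Gamma^2 = 1$; it anticommutes with the flip $\begin{pmatrix} 0 & 1 \\ 1 & 0 \end{pmatrix}$ and hence $\Gamma D = -D\Gamma$; and it commutes with $\pi_\tau(f)$ since both are diagonal in each block. Assembling these verifications establishes that $(C_{\text{Lip}}(X_{\mathcal{B}}), \ell^2(F\mathcal{B}, \C^2), \pi_\tau, D, \Gamma)$ is an even spectral triple for every $\tau \in \Upsilon$.
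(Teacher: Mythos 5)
Your proof is correct and follows essentially the same route as the original Pearson--Bellissard argument: the paper itself gives no proof of this proposition, simply citing \cite[Proposition 8]{pearson-bellissard}, and your verification (the blockwise commutator computation bounded via the Lipschitz condition together with the choice-function realization $d_w(\tau_+(\gamma),\tau_-(\gamma))=\text{diam}[\gamma]$, and compactness of the resolvent from finiteness of each $F^n\mathcal{B}$ plus the decay of the weights) is precisely that standard one. The only flaw is a wording slip in the resolvent step: since the eigenvalues $\text{diam}[\gamma]^{-1}$ of $|D|$ tend to infinity, it is the spectral projection of $|D|$ \emph{below} any threshold (equivalently, of $|D|^{-1}$ above any positive threshold) that is finite rank, not ``above''; the sentence preceding that claim contains the correct substance, so the argument stands.
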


\begin{lemma}
	\label{lem:eigenvalues-of-D} \cite[Section 6.1]{pearson-bellissard} $|D|$ is invertible. In particular 
$|D|^{-1}\psi(\gamma) =\diam[\gamma] \, \psi(\gamma)$, for every $\psi \in \ell^2(F\mathcal{B}, \C^2)$ and every finite path $\gamma \in F\mathcal{B}$.
\end{lemma}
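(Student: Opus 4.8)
The plan is to diagonalize $|D|$ explicitly using the block structure of $D$ and then read off both invertibility and the stated formula. Since $D$ is self-adjoint, $|D| = (D^2)^{1/2}$, so I would first compute $D^2$. On the two-dimensional summand indexed by a finite path $\gamma \in F\mathcal{B}$, the Dirac operator acts as $\frac{1}{\diam[\gamma]}\begin{pmatrix} 0 & 1 \\ 1 & 0 \end{pmatrix}$; because $\begin{pmatrix} 0 & 1 \\ 1 & 0 \end{pmatrix}^2$ is the $2\times 2$ identity, $D^2$ acts on that summand as the scalar $\diam[\gamma]^{-2}$ times the identity. Hence $D^2 = \bigoplus_{\gamma\in F\mathcal{B}} \diam[\gamma]^{-2}\, I_2$ is already diagonal, and its unique positive square root is $|D| = \bigoplus_{\gamma\in F\mathcal{B}} \diam[\gamma]^{-1}\, I_2$.

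Next I would verify that $|D|$ is invertible. The key point is that $\diam[\gamma] > 0$ for every $\gamma \in F\mathcal{B}$: since $X_{\mathcal{B}}$ is a Cantor set, condition (3) of Proposition \ref{pr:inf-path-cantor} (equivalently, the nonemptiness of $\Upsilon$ recorded in Definition \ref{def:choice}) guarantees that each cylinder $[\gamma]$ contains two distinct infinite paths, whence $\diam[\gamma] = w(\gamma) > 0$ by Hypothesis \ref{hyp:diam-equal-eright}. Thus every block of $|D|$ is invertible, with block inverse $\diam[\gamma]\, I_2$. Moreover the weight axioms of Definition \ref{def:weight} force $w(\gamma) \le w(r(\gamma)) \le \sum_{v\in\mathcal{V}_0} w(v) \le 1$, so the eigenvalues $\diam[\gamma]^{-1}$ of $|D|$ are bounded below by $1$; therefore $0$ lies outside the spectrum of $|D|$, and $|D|^{-1} = \bigoplus_{\gamma\in F\mathcal{B}} \diam[\gamma]\, I_2$ is a bounded operator on $\H$.

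Finally, reading off the action on the $\gamma$-component of an arbitrary $\psi \in \ell^2(F\mathcal{B}, \C^2)$ yields exactly $|D|^{-1}\psi(\gamma) = \diam[\gamma]\, \psi(\gamma)$, as claimed. I do not expect a genuine obstacle here: the whole argument reduces to the fact that each block is a scalar multiple of an involution, so the only steps needing care are the strict positivity of $\diam[\gamma]$ (which invokes the Cantor-set hypothesis) and the uniform lower bound on the spectrum (which invokes the normalization built into the weight); everything else is the elementary spectral theory of a block-diagonal operator.
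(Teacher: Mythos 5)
Your proof is correct and is essentially the same argument the paper relies on: the paper states this lemma without proof, citing Section 6.1 of \cite{pearson-bellissard}, where the result comes from exactly this block-diagonal computation (each block of $D$ is $\diam[\gamma]^{-1}$ times an involution, so $|D|$ is diagonal with eigenvalues $\diam[\gamma]^{-1}$). Your added care about strict positivity of $\diam[\gamma]$ and the uniform lower bound $\diam[\gamma]^{-1}\ge 1$ coming from the weight normalization is precisely what guarantees $0\notin\sigma(|D|)$ and that $|D|^{-1}$ is bounded, so the argument is complete.
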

It follows that $\{ \delta_{\lambda} \otimes e_i: \lambda \in F\mathcal B, \ i = 1, 2 \}$ is an orthonormal basis of $\ell^2(F\mathcal{B}, \C^2)\cong \ell^2(F\mathcal{B})\otimes  \C^2$ which consists of eigenvectors for $|D|^{-1}$, where $\{ e_1, e_2\}$ is the standard orthonormal basis of $\C^2$.}  Moreover,  since $|D|$ is invertible, we can replace the operator $<D>^{-1}:= (1+D^2)^{-1/2} $, appearing commonly in the noncommutative geometry literature, by $|D|^{-1}$.  

{
\begin{defn}
\label{def:general-zeta} \cite{pearson-bellissard}, \cite[Section 9.6]{BGV}
To any positive  operator with discrete spectrum $P$, we can associate a {\em $\zeta$-function} $\zeta_P$
which is defined on $\{ s \in \R: s >> 0\}$  by 
\[ \zeta_P(s) := \text{Tr}\, (P^{s}) = \sum_n \lambda(n, P)^{s}.\]
\end{defn}

{\color{black}
It now follows that the standard $\zeta$-function associated to the spectral triple $(C_{\text{Lip}}(X_{\mathcal{B}}), \mathcal{H}, \pi_\tau, D, \Gamma)$ can be described  as follows.

\begin{defn}\cite[Section 6.1]{pearson-bellissard}
\label{def:zeta-fcn-dixmier-trace}
The \emph{$\zeta$-function} associated to the Pearson-Bellissard spectral triple $(C_{\text{Lip}}(X_{\mathcal{B}}), \mathcal{H}, \pi_\tau, D, \Gamma)$ is given by
\begin{equation}\label{eq:zeta_w}
\zeta_w(s) := \frac{1}{2}\text{Tr}\,(|D|^{-s}) 
 =\sum_{\lambda \in F\mathcal{B}} {\diam}[\lambda]^s=\sum_{\lambda \in F\mathcal{B}} w(\lambda)^s, \quad\text{for $s{>>}0$}.
\end{equation}
\end{defn}

The above $\zeta$-function $\zeta_w$ is a Dirichlet series since $|D|^{-1}$ is compact with a  decreasing sequence of eigenvalues (equal to the diameters, or weights,   of the finite paths) by Lemma \ref{lem:eigenvalues-of-D}.   Thus,  by  \cite[Chapter 2]{hardy-riesz}, $\zeta_w$ {extends to a meromorphic function on $\C$ which }either converges everywhere, nowhere, or in the complex half plane $s=\text{Re}(z) > s_0$ for some $s_0$. In this last case we will call $s_0$ the \emph{abscissa of convergence} of $\zeta_w$. In other words, $s_0$ is the infimum of 
$s>0$ such that $\zeta_w(z)$ converges for $\text{Re}(z)>s$.

To determine the abscissa of convergence of the $\zeta$-function $\zeta_w$, it suffices to evaluate $\zeta_w$ at points $s \in \R$.  Since we are primarily interested in the abscissa of convergence of $\zeta_w$, throughout this article, we will only consider real arguments for $\zeta_w$.
\begin{rmk}
The factor $\frac12$ in Equation \eqref{eq:zeta_w} is non-standard, but is frequently used for Pearson-Bellissard spectral triples (cf.~\cite{pearson-bellissard, julien-savinien}). Using the factor $\frac{1}{2}$ ensures that
 $\zeta_\delta(s)$ equals exactly the sum of the weights to the power $s$.  However, this rescaling has no effect on the  dimension or summability of the spectral triple (see Definition \ref{def:p-summable} below).
	
We also note that Theorem \ref{thm:abscissa-conv} below establishes that, in our case of interest (namely when $\mathcal B = \mathcal B_\Lambda$ for a $k$-graph $\Lambda$ satisfying Hypothesis \ref{hyp:diam-equal-eright},
and $w = w_\delta$ for $\delta \in (0,1)$) the $\zeta$-function $\zeta_{w_\delta}(s)$ converges for $s > \delta$.
\end{rmk}
{
\begin{defn}
\label{def:p-summable} If there exists $p > 0$ such that $\zeta_w(p) < \infty$, then the spectral triple $(C_{\text{Lip}}(X_{\mathcal{B}}), \mathcal{H}, \pi_\tau, D, \Gamma)$ is {\em $p$-summable}. The spectral triple is  {\em  finitely summable} if $p$-summable for some $p>0$.
The {\em dimension} of the spectral triple is $\inf \{ p : \zeta_w(p) < \infty \}.$
 \end{defn}

{\color{black}
 
 \subsection{Finite summability for the Pearson-Bellissard  spectral triples of $k$-graphs}
 \label{sec:zeta-dixmier}
 
From now on we will focus on Pearson-Bellisard spectral triples of the form 
 $(C_{\text{Lip}}(X_{\mathcal{B}_\Lambda}), \mathcal{H}, \pi_\tau, D, \Gamma)$ associated to the weighted stationary $k$-Bratteli diagram $(\mathcal{B}_\Lambda, w_\delta)$ of a $k$-graph,  with weight $w_\delta$ as in Equation \eqref{eq:w-delta} of  Proposition~\ref{pr:delta-weight} above. In this case, the set of choice functions will be called $\Upsilon_\Lambda$.
 In particular we   will show in Theorem 	\ref{thm:abscissa-conv} that  the    dimension of $(C_{\text{Lip}}(X_{\mathcal{B}_\Lambda}), \mathcal{H}, \pi_\tau, D, \Gamma)$  is $\delta$, which coincides with the abscissa of convergence of  $\zeta_{w_\delta}$.

Before developing our theory further, we will present a simple example.

\begin{example}
	\label{ex:McNamara} Let $\Lambda_2$ be the 2-graph with one vertex amd two loops of each color, respectively $e_j$ and $f_j,$ with $j=1,2$, and with factorization relations
	\[
	e_i f_j = f_i e_j,\quad \forall i,j. 
	\]
	By \cite[Section 5.1]{FGLP}, every infinite path $\omega \in \Lambda^\infty_2$ has a unique representative of the form
	\[
	e_{i_1} f_{j_1}	e_{i_2} f_{j_2}\ldots 	e_{i_k} f_{j_k} \ldots.
	\]
	Therefore $\Lambda_2^\infty$ is in bijection with $\prod_\N \{ 0,1\}$.
	The vertex matrices of this 2-graph are
$
	A_1= (2),\ A_2= (2),
	$
	and therefore their spectral radii are $2$, with Perron-Frobenius eigenvector equal to $1$. The weights of Equation \eqref{eq:w-delta} of  Proposition~\ref{pr:delta-weight} are consequently given by
	\[
	w_\delta (\eta) = 2^{-\frac{n}{\delta}}, \quad \hbox{ where }\eta= 	e_{r_1} f_{r_2}	e_{r_3} f_{r_4}\ldots 	e_{r_n}\hbox{ or } \eta= 	e_{r_1} f_{r_2}	e_{r_3} f_{r_4}\ldots 	e_{r_{n-1}} f_{r_n},
	\]
Since there are $2^n$  paths of length $n$ in $F \mathcal B_\Lambda$, 
	the  zeta function 
	$\zeta_{w_\delta}$ is given by
	\[
\zeta_{w_\delta} (s) = \sum_{n \geq 0} \Big(\frac12\Big)^{\frac{s\, n}\delta} \, 2^{n}.
	\]

\end{example}

Fix a   weighted stationary $k$-Bratteli diagram $(\mathcal{B}_\Lambda, w_\delta)$  with weights as in Equation \eqref{eq:w-delta} of  Proposition~\ref{pr:delta-weight}. For this fixed choice of weights, we will write $d_\delta$ for the ultrametric $d_{w_\delta}$, and  $\zeta_\delta$ 
for the $\zeta$-function $\zeta_{w_\delta}$  associated to  $(C_{\text{Lip}}(X_{\mathcal{B}_\Lambda}), \mathcal{H}, \pi_\tau, D, \Gamma)$. 

We  now show that  the    dimension of $(C_{\text{Lip}}(X_{\mathcal{B}_\Lambda}), \mathcal{H}, \pi_\tau, D, \Gamma)$  is $\delta$, which coincides with the abscissa of convergence of  $\zeta_\delta$.

\begin{thm} 
	\label{thm:abscissa-conv}
	Let $\Lambda$ be a finite, strongly connected $k$-graph.  Fix $\delta \in (0,1)$ and suppose that Equation \eqref{eq:weight-diam} holds for the weight $w_\delta$ of Equation \eqref{eq:w-delta}. Then the zeta function
$\zeta_\delta(s)$ has abscissa of convergence $\delta$.
{Moreover,
$
\lim_{s \searrow \delta} \zeta_\delta(s) = \infty.
$}
{In particular, $(C_{\text{Lip}}(X_{\mathcal{B}_\Lambda}), \mathcal{H}, \pi_\tau, D, \Gamma)$ is always finitely summable.}
\end{thm}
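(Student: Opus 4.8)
The plan is to split the defining series $\zeta_\delta(s) = \sum_{\lambda \in F\mathcal{B}_\Lambda} w_\delta(\lambda)^s$ (Definition~\ref{def:zeta-fcn-dixmier-trace}, using Hypothesis~\eqref{eq:weight-diam} to identify $w_\delta(\lambda) = \diam[\lambda]$) according to the length $|\lambda| = n = qk+t$ with $0 \le t \le k-1$, and to show that the contribution of each length is bounded above and below by constant multiples of a single geometric term. Setting $\rho = \rho_1 \cdots \rho_k$, the product inside \eqref{eq:w-delta} equals $\rho^q\,\rho_1\cdots\rho_t$, so $w_\delta(\lambda)^s = \rho^{-qs/\delta}(\rho_1\cdots\rho_t)^{-s/\delta}(x^\Lambda_{s(\lambda)})^s$ depends on $\lambda$ only through its length and its source vertex. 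Hence the length-$n$ contribution factors as the scalar $\rho^{-qs/\delta}(\rho_1\cdots\rho_t)^{-s/\delta}$ times $\sum_{w \in \Lambda^0} N_n(w)(x^\Lambda_w)^s$, where $N_n(w)$ counts the finite paths of length $n$ in $F\mathcal{B}_\Lambda$ with source $w$.

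First I would record the counting identity $N_n(w) = (\mathbf{1}^T A^q A_1 \cdots A_t)_w$, where $A = A_1 \cdots A_k$; this follows directly from Definition~\ref{def-k-brat-diagrm}, since the adjacency matrix between levels $n-1$ and $n$ is $A_i$ with $i \equiv n \pmod k$. Consequently $\sum_w N_n(w)(x^\Lambda_w)^s = \mathbf{1}^T A^q \mathbf{z}$, where $\mathbf{z} = A_1 \cdots A_t \mathbf{y}$ and $\mathbf{y}$ is the vector with entries $(x^\Lambda_w)^s$. Because $\Lambda$ is source-free each $A_i$ has no zero row, and $x^\Lambda$ is strictly positive by strong connectivity, so $\mathbf{y}$ and hence $\mathbf{z}$ are strictly positive.

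The heart of the argument is then the two-sided estimate $c_1 \rho^q \le \mathbf{1}^T A^q \mathbf{z} \le c_2 \rho^q$ for constants $c_1, c_2 > 0$ depending only on $s$ and $t$. This is where the Perron--Frobenius vector enters: since $A x^\Lambda = \rho x^\Lambda$ with $x^\Lambda$ strictly positive and $\Lambda^0$ finite, any strictly positive $\mathbf{z}$ satisfies $c_1 x^\Lambda \le \mathbf{z} \le c_2 x^\Lambda$ entrywise; applying the entrywise-monotone operator $A^q$ gives $c_1 \rho^q x^\Lambda \le A^q \mathbf{z} \le c_2 \rho^q x^\Lambda$, and contracting with $\mathbf{1}^T$ (recall $\mathbf{1}^T x^\Lambda = 1$) yields the claim. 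I would stress that this sandwich uses \emph{only} that $A$ has a strictly positive eigenvector at its spectral radius $\rho$, not that $A$ is primitive or even irreducible; this is exactly what lets us weaken the primitivity hypothesis of \cite{pearson-bellissard, julien-savinien} to strong connectivity. Hypothesis~\eqref{eq:weight-diam} guarantees, via the Remark following Lemma~\ref{lem:weight-diam}, that $\rho_i \ge 2$, so $\rho > 1$ and $X_{\mathcal{B}_\Lambda}$ is a Cantor set by Proposition~\ref{pr:spec-rad-cantor}.

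Combining these estimates, the length-$n$ contribution is comparable to $\rho^{-qs/\delta}\rho^q = \rho^{q(1-s/\delta)}$, uniformly over the finitely many residues $t$. Summing over $q \ge 0$ and comparing with the geometric series $\sum_{q \ge 0}\rho^{q(1-s/\delta)} = (1 - \rho^{1-s/\delta})^{-1}$ shows, since $\rho > 1$, that $\zeta_\delta(s)$ converges for real $s > \delta$ and diverges for $s \le \delta$; as it suffices to test real arguments, the abscissa of convergence is $\delta$. For the divergence at the boundary, it is cleanest to retain only the subseries of lengths divisible by $k$ (i.e.\ $t=0$, where $\mathbf{z} = \mathbf{y}$), giving $\zeta_\delta(s) \ge c_1(s)\,(1-\rho^{1-s/\delta})^{-1}$; choosing the lower constant uniformly for $s$ in a right-neighbourhood of $\delta$ (using continuity of $s \mapsto \mathbf{y}$ on a compact interval containing $\delta$) forces $\zeta_\delta(s) \to \infty$ as $s \searrow \delta$. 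Finally, convergence at any $s > \delta$ gives finite summability by Definition~\ref{def:p-summable}. The main obstacle is precisely the two-sided control of $\sum_w N_n(w)(x^\Lambda_w)^s$ without primitivity: when $A$ is merely strongly connected its powers need not satisfy a clean asymptotic $A^q \sim \rho^q x^\Lambda (x^\Lambda)^T$, so I would deliberately avoid any spectral-gap or mixing statement and rely solely on the robust entrywise Perron--Frobenius sandwich, tracking the $s$-dependence of the constants only as far as the limit $s \searrow \delta$ requires.
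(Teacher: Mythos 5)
Your proof is correct, and its convergence half ($s>\delta$) is essentially the paper's argument: the paper splits the series by length $n=qk+t$ in the same way, uses the same path count $A^qA_1\cdots A_t(a,b)$, and bounds $A^q(a,z)/\rho^q$ above by $\max_b x^\Lambda_b/\min_b x^\Lambda_b$ (via Corollary 8.1.33 of Horn--Johnson, which is your upper sandwich in disguise). Where you genuinely depart from the paper is the divergence half. To show $\zeta_\delta(s)=\infty$ for $s\le\delta$, the paper deploys much heavier machinery: the Jordan canonical form of $A$, the existence of the Ces\`aro-type limits $A^{(j)}=\lim_\ell A^{\ell p+j}/\rho^{\ell p+j}$ (with $p$ the period of $A$, citing Rothblum and Berman--Plemmons), an exponential-rate error estimate, and a separate argument that each $A^{(j)}$ has a strictly positive entry in every row; the boundary statement $\lim_{s\searrow\delta}\zeta_\delta(s)=\infty$ is then extracted via Fatou's lemma. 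Your entrywise sandwich $c_1\rho^q x^\Lambda\le A^q\mathbf{z}\le c_2\rho^q x^\Lambda$ --- valid simply because $A^q$ is non-negative and $x^\Lambda$, $\mathbf{z}$ are strictly positive vectors over a finite vertex set --- replaces all of this with two lines, and it still uses only the existence of a positive eigenvector at the spectral radius, so it achieves exactly the weakening from primitivity to strong connectivity that the authors advertise; your uniform lower bound on $c_1(s)$ for $s$ in a compact right-neighbourhood of $\delta$ correctly handles the limit statement. The one thing the paper's heavier route buys is not this theorem but the next one: the exact expansion $A^n(z,v)=c_{z,v;n}\rho^n+\sum_i P_i^{z,v}(n)\alpha_i^n$ coming from the Jordan form is what makes the residue computation in Theorem \ref{thm:belong-trac-ideal} (and hence the Dixmier trace integral formula) possible, and there a two-sided bound of your type would not suffice. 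So your argument is a cleaner, self-contained proof of Theorem \ref{thm:abscissa-conv} itself, but the Jordan-form analysis cannot be dispensed with in the paper as a whole.
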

\begin{proof}
{In order to explicitly compute $\zeta_\delta(s)$, we first observe that we can rewrite 
\begin{equation}
\label{eq:def-zetsa-function-weights}\zeta_\delta(s) = \sum_{\lambda \in F\mathcal{B}_\Lambda} w_\delta(\lambda)^s = \sum_{n \in \N} \sum_{\lambda \in F^n\mathcal{B}_\Lambda} w_\delta(\lambda)^s = \sum_{q\in \N} \sum_{t=0}^{k-1} \sum_{\lambda \in F^{qk+t} \mathcal{B}_\Lambda} w_\delta (\lambda)^s,\end{equation}
where $F^n(\mathcal{B}_\Lambda)$ is the set of finite paths of $\mathcal{B}_\Lambda$ with length $n$.
Now, write $A := A_1 \cdots A_k$ {for the product of the adjacency matrices of $\Lambda$}.  If $t\in \{0,1, \ldots ,k-1 \}$ is fixed and  $n = qk + t$, then the number  of paths in $F^n(\mathcal B_\Lambda)$ with source vertex $b$ and range vertex $a$  is given by
$  A^q A_1 \cdots A_t (a,b) .$
Thus, writing $\rho := \rho_1 \cdots \rho_k$ {for the spectral radius of $A$, the formula for $w_\delta$ given in Equation \eqref{eq:w-delta} implies that}

\begin{equation}
\label{eq-similar-num}
\zeta_\delta(s) = \sum_{t=0}^{k-1} \frac{1}{{(\rho_1 \cdots \rho_t)^{s/\delta} }}\sum_{q \in \N} \sum_{a,b\in \mathcal{V}_0} A^q A_1 \cdots A_t (a,b) \frac{(x^\Lambda_b)^s }{\rho^{qs/\delta} } .\\
\end{equation}
Since all terms in this sum are non-negative, the series $\zeta_\delta(s)$ converges iff it converges absolutely; hence, rearranging the terms in the sum does not affect the convergence of $\zeta_\delta(s)$.
Thus, we can rewrite 
\begin{equation}
\label{eq-explicit-zeta}
\zeta_\delta(s)= \sum_{t=0}^{k-1} \sum_{a,b, z\in \mathcal{V}_0} \frac{A_1 \cdots A_t(z,b)}{(\rho_1 \cdots \rho_t)^{s/\delta}} (x^\Lambda_b)^s \sum_{q \in \N} \frac{A^q(a,z)}{\rho^{qs/\delta}} \\  .
\end{equation}
In order to show that $\zeta_\delta(s)$ converges for $s > \delta$, we begin by considering the sum $\sum_{q \in \N} \frac{A^q(a,z) }{(\rho^{s/\delta})^q}.$
Since $A$ has a positive right eigenvector of eigenvalue $\rho$ (namely $x^\Lambda$), 
Corollary 8.1.33 of \cite{horn-johnson-matrix-analysis} implies that 
\[ \frac{A^q(a,z)}{\rho^q} \leq \frac{\max \{ x^\Lambda_b\}_{b \in \mathcal{V}_0}}{\min \{ x^\Lambda_b\}_{b\in \mathcal{V}_0}} \ \forall \ q \in \N\backslash \{0\}.\]
Consequently, 
\[\sum_{q\in \N} \frac{A^q(a,z)}{\rho^q \rho^{(s/\delta-1)q}} \leq \delta_{a,z} + \frac{\max \{ x^\Lambda_b\}_{b\in \mathcal{V}_0}}{\min \{ x^\Lambda_b\}_{b\in \mathcal{V}_0}} \sum_{q \geq 1} \frac{1}{\rho^{(s/\delta-1)q}}.\]
If $s>\delta$, then our hypothesis that $\rho >1$ implies that $1/\rho^{(s/\delta -1)} \in (0,1),$ and thus 
$\sum_{q \geq 1}{\rho^{(1-s/\delta)q}}$ converges to $ (1-\rho^{(1- s/\delta)})^{-1}-1$.  
Consequently, 
\[ \sum_{q \in \N} \frac{A^q(a,z) }{(\rho^{s/\delta})^q} < \infty,\]
and hence $\zeta_\delta(s) < \infty$, for any $s > \delta$ since $\mathcal{V}_0$ is a finite set.

To see that $\zeta_\delta(s) = \infty$ whenever $s \leq \delta$, we have to work harder.
Theorem 8.3.5 part(b) of \cite{horn-johnson-matrix-analysis} implies that the Jordan canonical form of $A$ is 
\[J= \begin{pmatrix} \rho &0&0&0&0&\ldots&0&0&0 & 0 & 0 & 0 &0\\ 
0 & \ddots & 0 & 0 & 0&   \ldots & 0 & 0 &0 & 0 &0 &0 &0 \\
0 & 0 & \rho & 0 & 0 & \ldots & 0 & 0 & 0 & 0 &0 &0 &0\\
0 & 0 & 0 &\omega_1 \rho &0&\ldots&0&0&0& 0 &0 &0&0  \\ 
0 & 0 & 0 & 0 & \ddots & 0 & \ldots & 0 & 0 & 0  &0 &0&0 \\
0 & 0 & 0 & 0& 0& \omega_1 \rho & 0 & \ldots & 0 & 0 & 0 &0 &0\\
0 & 0 &  0 & 0 & 0 &0&\omega_2 \rho &0&\ldots&0 & 0&0 &0\\ 
 \vdots &\ldots&\ldots& \ldots & \ldots  & \ldots & \ldots &\ddots &\ldots&\ldots &\ldots&\ldots & \vdots\\ 
  0 & 0 &0 & 0 & 0 &0&0&0&\omega_{p-1} \rho &0  &\ldots&0&0\\ 
   0& 0&0 &0 &0&0&0 & 0 &0&J_{p+1}&0& 0&0\\
   \vdots &\ldots&\ldots& \ldots & \ldots & \ldots&\ldots&\ldots&\ldots  &\ldots &\ddots &\ldots  &\vdots \\ 
 0 & 0 &0&0&0 & 0 &0&\ldots& \ldots & \ldots  &0&J_{m-1}&0\\
 0 & 0 &0&0&0 & 0 &0&\ldots&\ldots &\ldots  &0&0&J_{m}\\
 \end{pmatrix},
\]
where $p$ is the period of $A$, $\omega_i$ is a $p$th root of unity for each $i$,  each  eigenvalue $\omega_i \rho$ is repeated along the diagonal $m_i$ times, and  $J_i$, $i=p+1, \ldots, m$ are Jordan blocks -- that is, upper triangular matrices whose constant diagonal is given by an eigenvalue $\alpha_i$ of $A$ (with $|\alpha_i| < \rho)$ and which have a  superdiagonal of 1s as  the only other nonzero entries.
 Thus, for each $1 \leq a, b \leq |\mathcal{V}_0| ,$
  \begin{equation}
  J^q(a,b)  \in \{ 0 \} \cup \{\rho^q\} \cup \{  \rho^q \omega_i^q: 1 \leq i \leq p-1 \} \cup \left\{ \frac{1}{\alpha_i^\ell} \binom{q}{\ell} \alpha_i^q : 0 \leq \ell \leq \dim J_i \right\}.\label{eq:jordan-powers}
 \end{equation}
Consequently, 
  \[ \left| \frac{1}{\rho^q } J^q(a, b) \right| \in \{ 0 , 1 \} \cup \left\{ \beta_i  \frac{1}{ | \alpha^\ell_i|} \binom{q}{\ell}: \beta_i =\frac{ |\alpha_i|}{\rho}  < 1 , \ 0 \leq \ell \leq \dim J_i \right\}.\]

Thanks to \cite{rothblum81} and \cite[Chapter 2]{BP94}, 
we know that since $A$ has a positive eigenvector (namely $x^\Lambda$) of eigenvalue $\rho$, $\lim_{\ell \to \infty} \frac{1}{\rho^{\ell p + j}} A^{\ell p + j}$ exists for all $0 \leq j \leq p-1$, where $p$ denotes the period of $A$.  Moreover, if we write 
\begin{equation} A^{(j)} =\lim_{\ell  \to \infty} \frac{1}{\rho^{\ell p + j}} A^{\ell p + j}\label{eq:A-j}
\end{equation}
 for this limit, and $\tau$ for the maximum modulus of the eigenvalues $\alpha_i$ of $A$ with $|\alpha_i| < \rho$, 
\[ \forall \ \left( \frac{\tau}{\rho} \right)^p < \beta < 1, \ \exists \ M_{\beta, j} \in \R^+ \ \text{s.t. } \forall \ m \in \N, \  \left| \frac{A^{mp+j}(a,b)}{\rho^{mp+j}} - A^{(j)}(a,b) \right| \leq M_{\beta, j} \beta^m .\]
 Thus, for all $ \ell \in \N$ and all $0 \leq j \leq p-1$, and all such $\beta$,
\begin{equation} \frac{A^{\ell p + j}(a,b)}{\rho^{\ell p +j}} \geq A^{(j)}(a,b) - M_{\beta,j} \beta^\ell  \qquad \text{ for all } \ell \in \N. \label{eq:zeta-lower-bd}
\end{equation}

Reordering the summands of $\sum_{q \in \N} A^q(a,b) (\rho^{-s/\delta})^q$, we see that 
\[\sum_{q \in \N} A^q(a,b) (\rho^{-s/\delta})^q = \sum_{j =0}^{p-1} \sum_{\ell  \in \N} A^{\ell p + j}(a,b) (\rho^{-s/\delta})^{\ell p + j}.\]

Now, fix $j \in \{ 0, \ldots, p-1\}$ and consider the sum 
\begin{align*} \sum_{\ell  \in \N} A^{\ell p + j}(a,b) (\rho^{-s/\delta})^{\ell p + j} & = \sum_{\ell \in \N} \frac{A^{\ell p + j}(a,b)}{\rho^{\ell p+j}} \left(\frac{1}{\rho^{s/\delta-1}}\right)^{\ell p + j} \\
& \geq \frac{1}{\rho^{(s/\delta - 1)j}} \sum_{\ell \in \N} ( A^{(j)}(a,b) - M_{\beta,j} \beta^\ell  )  \left(\frac{1}{\rho^{s/\delta-1}}\right)^{p\ell } .
\end{align*}
If $A^{(j)}(a,b) >0$, the fact that $\beta < 1$ and $M_{\beta, j} > 0$ implies that there exists $M$ such that for $\ell > M$, $A^{(j)}(a,b) > M_{\beta, j} \beta^\ell $.  Consequently, if we define 
\[ K = \frac{1}{\rho^{(s/\delta - 1)j}} \sum_{\ell =0}^{M} \frac{A^{(j)}(a,b) - M_{\beta, j} \beta^\ell }{\rho^{(s/\delta-1)p\ell }},\]
and write $\nu  = A^{(j)}(a,b) - M_{\beta, j} \beta^M > 0$, the fact that $\{ M_{\beta, j} \beta^\ell \}_{\ell \in \N}$ is a decreasing sequence implies that  
\begin{equation}
\label{eq-div-as-s-to-delta+}
 \sum_{\ell \in \N} A^{\ell p + j}(a,b) (\rho^{-s/\delta})^{\ell p + j} > K + \frac{\nu}{\rho^{(s/\delta - 1)j}} \sum_{\ell  > M}  \left(\frac{1}{\rho^{s/\delta-1}}\right)^{p\ell } .
\end{equation}

Since $\rho > 1$ and $s \leq \delta$,  $\rho^{(1-s/\delta)p} \geq 1;$ consequently, the series $\sum_{\ell > M}  (\rho^{(1-s/\delta)p})^\ell$ diverges to infinity.  The fact that $K, \nu$ are finite now implies that  $\sum_{\ell \in \N} A^{mp + j}(a,b) (\rho^{-s/\delta})^{\ell p + j} $ also diverges to infinity if $A^{(j)}(a,b)  > 0$. 

{Inequality (\ref{eq-div-as-s-to-delta+}) above also shows that we must have $\lim_{s \searrow \delta} \zeta_\delta(s) = \infty.$ All terms are non-negative on both sides of this inequality, and Fatou's Lemma for series applied to the right-hand side of \eqref{eq-div-as-s-to-delta+} shows that 
\begin{equation}
\label{eq:limit-from-below}
\lim_{s \searrow \delta} \frac{\nu}{\rho^{(s/\delta - 1)j}} \sum_{\ell > M}  \left(\frac{1}{\rho^{s/\delta-1}}\right)^{p\ell }\geq
\frac{\nu}{\rho^{(\delta/\delta - 1)j}}	\sum_{\ell  > M}  \left(\frac{1}{\rho^{\delta/\delta-1}}\right)^{p\ell }\\
=\;\nu\cdot \sum_{\ell > M}  \left(\frac{1}{1}\right)^{p\ell }\;=\;+\infty.
\end{equation}	 }

Now, we show that for each  $j$, there must exist  some $(a, b) \in \mathcal{V}_0$ such that $A^{(j)}(a,b) > 0$. 
Recall that $x^\Lambda$ is an eigenvector for $A$, and consequently for $A^{\ell p+j}$.  Thus,
\[\sum_{b \in \mathcal{V}_0} A^{\ell p +j}(a,b) x^\Lambda_b = \rho^{\ell p+j} x^\Lambda_a.\]
Since $x^\Lambda$ is a positive eigenvector, there exists $\alpha > 0$ such that $x^\Lambda_a > \alpha$ for all $a \in \mathcal{V}_0$.  Moreover, $x^\Lambda$ is a unimodular eigenvector, so $0 < x^\Lambda_b \leq 1$ for all $b \in \mathcal{V}_0$. Thus the above equation becomes 
\[ \rho^{\ell p+j} \alpha < \rho^{\ell p+j} x^\Lambda_a =  \sum_{b \in \mathcal{V}_0} A^{\ell p +j}(a,b) x^\Lambda_b \leq  \sum_{b \in \mathcal{V}_0} A^{\ell p+j}(a,b).\]
Consequently, for each $a \in \mathcal{V}_0$ and each $\ell  \in \N$ there exists at least one vertex $b$ such that 
\[\frac{A^{\ell p+j}(a,b)}{\rho^{\ell p+j}} >\frac{\alpha}{\#(\mathcal{V}_0)}.\]  
Moreover, since $\#(\mathcal{V}_0) < \infty$, the definition of the limit $A^{(j)}$ implies that there exists $N\in \N$ such that whenever $\ell  \geq N$ we have 
\[ A^{(j)}(a,b) > \frac{A^{\ell p+j}(a,b)}{\rho^{\ell p+j}} - \frac{\alpha}{2\#(\mathcal{V}_0)}\ \forall a, b \in \mathcal{V}_0.\]
Now, fix $a$ and $\ell  \geq N$.  Choose  $b\in \mathcal V_0$ such that $\frac{A^{\ell p+j}(a,b)}{\rho^{\ell p+j}} >\frac{\alpha}{\#(\mathcal{V}_0)}.$
It then follows that for this choice of $b$, 
\[ A^{(j)}(a,b) > \frac{A^{\ell p+j}(a,b)}{\rho^{\ell p+j}} - \frac{\alpha}{2\#( \mathcal V_0)} > \frac{\alpha}{2\#(\mathcal V_0)}.\]
In other words, we have proved that 
\begin{equation}
\label{eq:Aj-positive}
\forall\  1 \leq j \leq p, \ \forall \ a \in \mathcal V_0, \ \exists \ b \in \mathcal V_0 \qquad \text{s.t.}\qquad 
A^{(j)}(a,b)  > \frac{\alpha}{2\#(\mathcal{V}_0)} > 0.\end{equation}

Finally, recalling that the matrices $A_i$ commute, we observe that 
\[\sum_{z\in \mathcal{V}_0} A^{\ell p+j}(a,z) A_1 \cdots A_t(z,b) = (A_1 \cdots A_t) A^{\ell p +j}(a,b) = \sum_{z\in \mathcal{V}_0}A_1 \cdots A_t(a,z) A^{\ell p+j}(z,b).\]
Using this, we rewrite
\[ \zeta_\delta(s) = \sum_{a, b, z \in \mathcal{V}_0} \sum_{t=0}^{k-1} \frac{ A_1 \cdots A_t (a,z) (x^\Lambda_b)^s}{(\rho_1 \cdots \rho_t)^{s/\delta}} \sum_{j=0}^{p-1} \sum_{\ell \in \N} \frac{A^{\ell p + j}(z,b)}{\rho^{(\ell p+j)s/\delta}}.\]

It now follows from our arguments above that $\zeta_\delta(s)$
diverges whenever $s \leq \delta$.  To convince yourself of this, it may help to recall that $x^\Lambda_b$ is positive for all vertices $b$, and that (since $A_1 \cdots A_t(a,z)$ represents the number of paths of degree $(\overbrace{1, \ldots, 1}^t, 0, \ldots, 0)$ with source $z$ and range $a$) our hypothesis that $\Lambda$ be source-free implies that $\sum_a A_1 \cdots A_t(a,z) $ must be strictly positive for each $t$ .  In other words, $\zeta_\delta(s)$ is computed by taking a bunch of sums that diverge to infinity when $s \leq \delta$, possibly adding some other positive numbers, multiplying the lot by some positive scalars, and adding the results. Consequently, $\delta$ is the abscissa of convergence of the $\zeta$-function $\zeta_\delta(s)$, as claimed. 
}
\end{proof}}

 As a corollary to Theorem \ref{thm:abscissa-conv} we obtain

\begin{cor} \label{cor:spec-dim}	Let $\Lambda$ be a finite, {strongly connected}  $k$-graph.  Fix $\delta \in (0,1)$ and suppose that Equation \eqref{eq:weight-diam} holds for the weight $w_\delta$ of Equation \eqref{eq:w-delta}. Then the spectral triple  $(C_{\text{Lip}}(X_{\mathcal{B}}), \mathcal{H}, \pi_\tau, D, \Gamma)$ is finitely summable and its  dimension is $\delta$. 
\end{cor}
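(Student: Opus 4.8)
The plan is to deduce this directly from Theorem \ref{thm:abscissa-conv} together with Definition \ref{def:p-summable}; no new analysis is needed, since all the genuine work has already been carried out in the proof of the theorem. Recall that by Definition \ref{def:p-summable}, the spectral triple $(C_{\text{Lip}}(X_{\mathcal{B}}), \mathcal{H}, \pi_\tau, D, \Gamma)$ is $p$-summable exactly when $\zeta_\delta(p) < \infty$, it is finitely summable if it is $p$-summable for some $p > 0$, and its dimension is $\inf \{ p : \zeta_\delta(p) < \infty \}$. Thus the whole statement amounts to translating the conclusion of Theorem \ref{thm:abscissa-conv} into the vocabulary of summability.

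First I would record precisely what Theorem \ref{thm:abscissa-conv} provides under the standing hypotheses: the abscissa of convergence of $\zeta_\delta$ equals $\delta$, so that $\zeta_\delta(s) < \infty$ for every $s > \delta$, while the divergence argument in that proof shows $\zeta_\delta(s) = \infty$ for every $s \leq \delta$. Hence the set $\{ p : \zeta_\delta(p) < \infty \}$ is exactly the interval $(\delta, \infty)$. For finite summability it then suffices to produce a single $p > 0$ with $\zeta_\delta(p) < \infty$; any $p > \delta$ (for instance $p = \tfrac{1+\delta}{2}$) works, establishing that the triple is finitely summable. For the dimension I would simply take the infimum: $\inf \{ p : \zeta_\delta(p) < \infty \} = \inf (\delta, \infty) = \delta$, as claimed.

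There is no substantive obstacle in this corollary, as the hard estimates live entirely in Theorem \ref{thm:abscissa-conv}. The only point meriting a word of care is the boundary value $s = \delta$, and even that is inessential for the conclusion: the infima of $(\delta, \infty)$ and of $[\delta, \infty)$ agree, so the dimension is $\delta$ regardless of whether $\zeta_\delta(\delta)$ converges, and finite summability needs only a value strictly above $\delta$.
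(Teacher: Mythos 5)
Your proposal is correct and matches the paper's own treatment: the paper states this corollary as an immediate consequence of Theorem \ref{thm:abscissa-conv}, with exactly the translation you give (convergence of $\zeta_\delta(s)$ for $s>\delta$, divergence for $s\le\delta$, hence $\{p:\zeta_\delta(p)<\infty\}=(\delta,\infty)$ and the dimension is its infimum $\delta$). Your remark that the boundary case $s=\delta$ is immaterial to the infimum is also accurate.
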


\begin{example} (Continuation of Example 	\ref{ex:McNamara})
	\label{ex:abscissa-of-convergence}
In this example,
\[
\zeta_{\delta} (s) = \sum_{n \geq 0} \Big(\frac12\Big)^{\frac{s\, n}\delta} \, 2^{n}=
\sum_{n \geq 0} \, 2^{n(1-\frac{s}\delta)}=\frac{1 }{1-2^{(1-\frac{s}\delta)} },
\]
which evidently has  abscissa of convergence $\delta$, and satisfies $\lim_{s \searrow \delta} \zeta_\delta(s) = \infty$.
	\end{example}

\subsection{{Dixmier traces and measures on $X_{\mathcal B_\Lambda}$}}
\label{sec:dixmier}

{ In this section we show (in Proposition \ref{pr:dixmier-trace-measure}) that, via the machinery of Dixmier traces, the spectral triples $(C_{\text{Lip}}(X_{\mathcal B_ \Lambda}), \ell^2(F{\mathcal{B}}_\Lambda, \C^2), \pi_\tau, D, \Gamma)$ give rise to measures $\mu_{\delta}$ on $X_{\mathcal B_{\Lambda}}$.  A careful analysis of these measures reveals that they are independent of the choice of the choice function $\tau \in \Upsilon_\Lambda$. 
Furthermore, Theorem \ref{thm:Dixmier-trace} gives an integral formula, using the measure $\mu_\delta$, for the Dixmier trace.  This computation of the Dixmier trace also establishes (Remark \ref{rmk:zeta-regular}) that the Cantor sets $(X_{\mathcal B_\Lambda}, d_\delta)$ are $\zeta$-regular in the sense of Pearson and Bellissard \cite{pearson-bellissard}.

We conclude the section with Theorems \ref{thm:dixmier-aHLRS} and \ref{thm:Dixmier-trace-final}.  Theorem \ref{thm:dixmier-aHLRS} establishes that for any choice of  $\delta$, the normalized measure $\nu_{\delta} =\frac{1}{\mu_{\delta}(X_{\mathcal B_\Lambda})} {\mu_{ \delta}}$ agrees with the measure $M$, described in Equation \eqref{eq:M-measure}, which was introduced by an Huef et al.~in \cite{aHLRS}.}  Consequently, the measures $\mu_\delta$ are in fact independent of $\delta \in (0,1)$. With Theorem \ref{thm:dixmier-aHLRS} in hand, we obtain a more general integral formula for the Dixmier trace in Theorem \ref{thm:Dixmier-trace-final}.

We begin by discussing some preliminaries about Dixmier traces.  For the convenience of those readers wishing to compare our discussion with other sources, we recall that in our case  the operator {$|D|$, and hence } $|D|^{\delta}$, is invertible,  and so what in most references we cite is called $<D>^{-\delta}:= (1+D^2)^{-\delta/2} $ gets replaced by $|D|^{-\delta}$ in the   formulas below; see for example \cite{guido-isola}, \cite{GI-vN}.

{
\begin{defn}
 \cite[Example 1.2.9]{lord-book}
\label{def:measurable}
Let $\{\sigma_k(T)\}_{k\in \N}$ denote the singular values of a compact operator $T$ on a separable Hilbert space $\H$, listed with multiplicity, in (weakly) decreasing
order of absolute values.
The {\em Dixmier-Macaev ideal} (also called the Lorentz ideal) $\mathcal M_{1, \infty}$  is
\[
\left \{  T \in \K(\H): \limsup_n \frac{1}{\ln(n)} \sum_{k=1}^n \sigma_k(|T|)<  \infty \right \}.\]
\end{defn}
{Following  \cite{lord-book}, for a generalized limit $\omega$ on $\ell^{\infty}(\mathbb N)$ vanishing on ${\bf c}_0,$ we can define the Dixmier trace ${\mathcal T}_\omega,$ which is a linear functional on $\mathcal M_{1, \infty}$. }

An operator $T$ in $\mathcal M_{1, \infty}$ is {\em measurable} {in the sense of Connes} {(or Connes measurable, or in \cite{lord-book} Dixmier measurable)} if ${\mathcal T}_\omega(T)={\mathcal T}_{\omega'}(T),$ for all Dixmier traces $\omega$,  $\omega'$ on $\mathcal M_{1, \infty}$ \cite[Page 222]{lord-book}.  By \cite{lord-sedaev-sukochev} (see also 
\cite[Proposition A4]{connes-moscovici}
), when $T$ is positive this is equivalent to saying that $ \lim_{s\searrow 1} (s-1)\text{Tr}(T^s)$ exists and is finite, in which  case, $ \lim_{s\searrow 1} (s-1)\text{Tr}(T^s) = \lim_{n \to + \infty} \frac{1}{\ln(n)} \sum_{k=1}^n \sigma_k(T).$ This was originally proved by Connes and Moscovici in \cite[Proposition A4]{connes-moscovici}, where they used the notation $\mathcal{L}^{(1, \infty)}$ for the Dixmier-Macaev ideal (cf.~\cite[Definition A2]{connes-moscovici}).

 Because Theorem \ref{thm:belong-trac-ideal} below establishes that our operators of interest are  measurable in Connes' sense, we will study the quantity
\begin{equation}
\label{eq:def-Dixmier-trace} {\mathcal T} (T) =  \lim_{s\searrow 1} (s-1)\text{Tr}(T^s), 
\end{equation}
which gives the value of any Dixmier trace applied to $T$ if  $T$  is positive and measurable in the sense of Connes. Note that if $A$ is a clopen set in the Cantor set $X_{\mathcal{B}_\Lambda}$, then $\chi_A$ is Lipschitz; so if $\lambda \in F{\mathcal B}_\Lambda$, then the characteristic function $\chi_{[\lambda]}$ of the cylinder set $[\lambda]$ is Lipschitz.

\begin{thm} \label{thm:belong-trac-ideal}	Let $\Lambda$ be a finite, strongly connected $k$-graph.  Fix $\delta \in (0,1)$ and suppose that Hypothesis \ref{hyp:diam-equal-eright} holds for the weight $w_\delta$ of Equation \eqref{eq:w-delta}. Then for any $\lambda \in F{\mathcal B}_\Lambda$, the operator $ \pi_\tau(\chi_{[\lambda]})|D|^{-\delta}$ is measurable in the sense of Connes, and $\mathcal T \Big( \pi_\tau (\chi_{[\lambda]}) |D|^{-\delta} \Big)$ is finite and positive. 
	\end{thm}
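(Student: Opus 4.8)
The plan is to diagonalise $T := \pi_\tau(\chi_{[\lambda]})|D|^{-\delta}$ in the eigenbasis $\{\delta_\gamma \otimes e_i : \gamma \in F\mathcal{B}_\Lambda,\ i=1,2\}$ of $|D|^{-1}$ and to reduce the statement to a residue computation for a partial $\zeta$-function. By Lemma~\ref{lem:eigenvalues-of-D} and Hypothesis~\ref{hyp:diam-equal-eright} we have $|D|^{-\delta}(\delta_\gamma \otimes e_i) = w_\delta(\gamma)^\delta\,(\delta_\gamma \otimes e_i)$, while \eqref{eq:representation} shows that $\pi_\tau(\chi_{[\lambda]})$ acts on the $\gamma$-block as the diagonal projection $\operatorname{diag}\bigl(\chi_{[\lambda]}(\tau_+(\gamma)),\,\chi_{[\lambda]}(\tau_-(\gamma))\bigr)$. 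Hence $T \ge 0$ is diagonal, and I would classify its eigenvalues by the position of $\gamma$ relative to $\lambda$: when $\gamma$ extends $\lambda$ one has $\tau_\pm(\gamma)\in[\gamma]\subseteq[\lambda]$, so that block contributes $w_\delta(\gamma)^\delta$ with multiplicity two; when $[\gamma]\cap[\lambda]=\emptyset$ the block vanishes; and the at most $|\lambda|$ paths $\gamma$ that are proper prefixes of $\lambda$ contribute a finite correction. This yields, for $s>1$,
\[ \operatorname{Tr}(T^s) \;=\; 2\sum_{\gamma \supseteq \lambda} w_\delta(\gamma)^{\delta s} \;+\; C(s), \]
where $C(s)$ is a finite sum of terms $c\,w_\delta(\gamma)^{\delta s}$ and is therefore bounded near $s=1$, so that $(s-1)C(s)\to 0$.

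Since $|D|^{-\delta} \in \mathcal M_{1,\infty}$ by Theorem~\ref{thm:abscissa-conv} and $\mathcal M_{1,\infty}$ is an ideal, $T\in\mathcal M_{1,\infty}$; so by the criterion recalled around \eqref{eq:def-Dixmier-trace} it suffices to prove that
\[ \mathcal T(T) \;=\; 2\lim_{s \searrow 1}(s-1)\sum_{\gamma \supseteq \lambda} w_\delta(\gamma)^{\delta s} \]
exists, is finite, and is strictly positive; establishing this simultaneously shows that $T$ is measurable in the sense of Connes and computes its Dixmier trace. To evaluate the limit I would expand the partial sum exactly as in the proof of Theorem~\ref{thm:abscissa-conv}. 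Writing $m = |\lambda|$, $a = s(\lambda)$, and grouping the extensions $\gamma = \lambda\eta$ by length, the commutativity of the $A_i$ lets me write the number of extensions of a given source as $A^{\mathsf q}B(a,b)$, where $A = A_1\cdots A_k$ and $B$ is an explicit finite product of adjacency matrices; the weight formula \eqref{eq:w-delta} then rewrites $\sum_{\gamma\supseteq\lambda}w_\delta(\gamma)^{\delta s}$ as a finite non-negative combination of inner sums $\sum_{\ell} \rho^{-(\ell p + j)s} A^{\ell p + j}(z,b)$, with $\rho = \rho_1\cdots\rho_k$ the spectral radius of $A$ and $p$ its period.

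Substituting the Perron--Frobenius asymptotics $A^{\ell p + j}(z,b) = \rho^{\ell p + j}\bigl(A^{(j)}(z,b) + O(\beta^\ell)\bigr)$ from \eqref{eq:A-j} (with $\beta<1$), the error terms sum to a quantity that stays bounded as $s \searrow 1$, hence contribute nothing after multiplication by $(s-1)$, while the leading terms produce geometric series $\sum_\ell \rho^{\ell p (1-s)} = (1 - \rho^{p(1-s)})^{-1}$ whose only singularity as $s \searrow 1$ is a simple pole with residue $\tfrac{1}{p \ln \rho}$. Collecting everything, the limit is a positive scalar multiple of a finite sum over $j,z,b$ of non-negative matrix entries times $A^{(j)}(z,b)\,(x^\Lambda_b)^\delta$.

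Its finiteness reflects that the peripheral spectrum of $A$ is semisimple---which is precisely what guarantees the existence of the limits $A^{(j)}$ in \eqref{eq:A-j}, so that the pole at $s=1$ is simple rather than of higher order---and its strict positivity follows from \eqref{eq:Aj-positive}, the positivity of $x^\Lambda$, and the source-freeness of $\Lambda$, which together force at least one strictly positive summand. The main obstacle is exactly this last step: isolating the simple pole from the subdominant and peripheral eigenvalues of $A$ and checking that the residue is finite and strictly positive. This is the technical heart shared with Theorem~\ref{thm:abscissa-conv}, and I would reuse its matrix-analytic estimates---in particular the uniform bound on $\rho^{-(\ell p + j)}A^{\ell p + j} - A^{(j)}$ and the positivity statement \eqref{eq:Aj-positive}---now tracking the exact residue rather than merely the abscissa of convergence.
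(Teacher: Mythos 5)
Your proposal is correct and follows essentially the same route as the paper's proof: both reduce $\operatorname{Tr}(T^s)$ to $2\sum_{\eta\in F_\lambda\mathcal{B}_\Lambda}w_\delta(\eta)^{\delta s}$ plus a harmless finite correction, expand that sum in powers of $A=A_1\cdots A_k$, and use the Perron--Frobenius/Jordan structure (the limits $A^{(j)}$ with exponentially small remainder) to isolate a simple pole $(1-\rho^{p(1-s)})^{-1}$ at $s=1$, whose residue (your $1/(p\ln\rho)$ is the paper's $L_2$, and your leading-coefficient sum is the paper's $L_1$) is finite and strictly positive by \eqref{eq:Aj-positive} and the positivity of $x^\Lambda$. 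One caveat: your claim that $|D|^{-\delta}\in\mathcal{M}_{1,\infty}$ follows from Theorem \ref{thm:abscissa-conv} is not justified as stated (knowing the abscissa of convergence alone does not give membership in the Dixmier--Macaev ideal), but this step is redundant, since for positive operators the criterion recalled around \eqref{eq:def-Dixmier-trace} yields membership and Connes measurability directly from the existence and finiteness of the limit you then compute, which is exactly how the paper itself proceeds.
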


\begin{proof}  We first observe that the since the operators $\pi_\tau(\chi_{[\lambda]})$ and $ |D|^{-\delta}$ are both diagonal with respect to the basis $\{ \delta_\lambda \otimes e_i: \lambda \in F\mathcal B_\Lambda, i =1,2\}$ of $\H$, they commute.  Since   $\pi_\tau(\chi_{[\lambda]})$ and $\ |D|^{-\delta}$  are also positive, then, $\pi_\tau(\chi_{[\lambda]}) |D|^{-\delta}$ is positive.
We now  note that, by Equation \eqref{eq:def-zetsa-function-weights},
\[ \frac{1}{2} \text{Tr}((\pi_\tau(\chi_{[\lambda]}) |D|^{-\delta})^s) =\text{ a finite sum plus} \  \sum_{\eta \in F_\lambda\mathcal B_\Lambda} w_\delta(\eta)^{\delta s}.\]
Write $p$ for the period of $A = A_1 \cdots A_k$.  We will show that $L_1 := \lim_{s\searrow 1} (1-\rho^{p(1-s)}) \sum_{\eta \in F_\lambda\mathcal B_\Lambda} w_\delta(\eta)^{\delta s}$ and $\displaystyle L_2 := \lim_{s\searrow 1} \frac{s-1}{1-\rho^{p(1-s)}}$ are both finite and nonzero.  It then follows that 
\[ \mathcal T \Big( \pi_\tau (\chi_{[\lambda]}) |D|^{-\delta} \Big) = \lim_{s\searrow 1} (s-1) \text{Tr}\left( \Big( \pi_\tau (\chi_{[\lambda]}) |D|^{-\delta} \Big)^s\right) = 4L_1 L_2\]
is finite and nonzero, so $ \pi_\tau (\chi_{[\lambda]}) |D|^{-\delta}$ is Connes measurable as claimed.

The fact that $L_2 \in (0, \infty)$ follows from L'Hospital's rule:
\[ \lim_{s\searrow 1} \frac{s-1}{1-\rho^{p(1-s)}} = \lim_{s\searrow 1} \frac{1}{\rho^{p(1-s)} \ln(\rho^p)} =\frac{1}{\ln(\rho^p)} \in (0, \infty),\]
since $p \geq 1$ and $\rho = \rho_1 \cdots \rho_k > 1$.  To see that $L_1 \in (0, \infty)$, observe that if $|\lambda| = qk$, 
\begin{equation}\label{eq:explicit-expression-zeta-function} \sum_{\eta \in F_\lambda\mathcal B_\Lambda} w_\delta(\eta)^{\delta s} = \frac{1}{\rho^{qs}} \sum_{t=0}^{k-1} \sum_{n=0}^\infty \sum_{v, b \in \mathcal V_0} \frac{A^n(s(\lambda), v)}{\rho^{ns}} \frac{A_1 \cdots A_t(v, b)}{(\rho_1\cdots \rho_t)^{s}} (x^\Lambda_b)^{\delta s}
\end{equation}
Again, since all terms in the sum are non-negative, rearranging the order of the summation has no effect on the convergence of the series.  

Recall from our computations in Equation \eqref{eq:jordan-powers}  of the Jordan form $J$ of $A$  that  for any $z,v \in \mathcal{V}_0$ we can find constants $c_i^{z,v}$ and polynomials $P_i^{z,v}$ such that 
for any $n\in \N$, we have 
\begin{equation}
\label{eq-analog-oflemma-3-6-of-JS-all-cases-0}
A^n(z,v) = c_{1}^{z,v}\rho^n
+ c_{2}^{z,v} \omega_1^n \rho^n
+ \cdots +  c_{p}^{z,v} \omega_{p-1}^{n} \rho^n
+  \sum_{i=p+1}^m P_i^{z,v}(n) \alpha_i^n,
\end{equation}
where $p$ is the period of $A$, $\omega_i$ is a  $p$th root of unity for all $i$, and 
each $\alpha_i$ is an eigenvalue of $A$ with $|\alpha_i| <\rho$.  In more detail, writing $A = C^{-1} J C$ for some invertible matrix $C$,
we have 
\[ c_i^{z,v} = \sum_{j=m_0 + \cdots + m_{i-1}+1}^{m_0 + \cdots + m_i}C^{-1}(z,j) C(j, v) \]
\[ \text{ and }
\qquad P_i^{z,v}(n) = \sum_{(a,b): J_i^n(a,b)\not= 0} C^{-1}(z,a) C(b,v) \frac{1}{\alpha_i^{b-a}} \binom{n}{b-a}.\]
Recall that since $J_i$ is a Jordan block, $J_i^n(a,b) = 0$ unless $a \leq b$.  
Equivalently, setting $c_{z,v;n} = c_{1}^{z,v}
+ c_{2}^{z,v} \omega_1^n 
+ ....+  c_{p}^{z,v} \omega_{p-1}^{n} $, we have
\begin{equation}
\label{eq-analog-oflemma-3-6-of-JS-all-cases}
A^n(z,v) = c_{z,v;n}\rho^n
+  \sum_{i=p+1}^m P_i^{z,v}(n) \alpha_i ^n.
\end{equation}
Observe that the definition of $c_{z, v; n}$ implies that $c_{z,v; n} = c_{z,v; n+p}$ for all $n \in \N$. 
Moreover, if we consider the limit 
$A^{(j)}(z,v) = \lim_{\ell \to \infty} \frac{A^{\ell p+j}(z,v)}{\rho^{\ell p +j}}, $
Equation \eqref{eq-analog-oflemma-3-6-of-JS-all-cases} implies that 
\begin{equation}
\label{eq:Aj-c-coeffs}
A^{(j)}(z,v) = c_{z,v;j},
\end{equation}
so each $c_{z,v; j}$ is a non-negative real number.

Using Equation \eqref{eq-analog-oflemma-3-6-of-JS-all-cases}, 
we  rewrite a portion of Equation \eqref{eq:explicit-expression-zeta-function}: 
\begin{align*}
\sum_{n=0}^\infty \frac{A^n(s(\lambda), v)}{\rho^{ns}}
&= \sum_{j=0}^{p-1} c_{s(\lambda), v; j} \sum_{\ell = 0}^\infty \rho^{(\ell p + j)(1-s)} + \sum_{n=0}^\infty \sum_{i=1}^m P_i^{s(\lambda), v}(n)\left( \frac{\alpha_i}{\rho^s}\right)^n\\
&= \sum_{j=0}^{p-1} \frac{\rho^j c_{s(\lambda), v; j} }{1- \rho^{p(1-s)}} + \sum_{i=1}^m \sum_{n=0}^\infty P_i^{s(\lambda), v}(n) \left( \frac{\alpha_i}{\rho^s}\right)^n.
\end{align*}
The fact that $\rho > 1, s> 1$ and $p \geq 1$ implies that the ratio $\rho^{p(1-s)}$ of the geometric series $\sum_{\ell=0}^\infty \rho^{(\ell p + j)(1-s)}$ is less than 1.  Moreover, since $P_i^{z, v}(n)$ is a polynomial in $n$, the fact that $s> 1$ and that $|\alpha_i | < \rho $ for all $i$ implies that the second sum above converges to a finite value $F_v(s)$; indeed, the function $F_v(s)$ is continuous (and finite) at $s=1$.  Consequently, 
\begin{align*}
L_1 &=    \lim_{s\searrow 1} (1-\rho^{p(1-s)}) \sum_{\eta \in F_\lambda\mathcal B_\Lambda} w_\delta(\eta)^{\delta s}\\
&= \lim_{s \searrow 1} \frac{1-\rho^{p(1-s)}}{\rho^{qs}} \sum_{t=0}^{k-1} \sum_{n=0}^\infty \sum_{v, b \in \mathcal V_0} \frac{A^n(s(\lambda), v)}{\rho^{ns}} \frac{A_1 \cdots A_t(v, b)}{(\rho_1\cdots \rho_t)^{s}} (x^\Lambda_b)^{\delta s}\\
&= \lim_{s\searrow 1}\frac{1-\rho^{p(1-s)}}{\rho^{qs}}\left( \sum_{t=0}^{k-1}\sum_{v, b \in \mathcal V_0} \frac{A_1 \cdots A_t(v, b)}{(\rho_1\cdots \rho_t)^{s}} (x^\Lambda_b)^{\delta s} \left(  \sum_{j=0}^{p-1} \frac{\rho^j c_{s(\lambda), v; j} }{1- \rho^{p(1-s)}} + F_v(s) \right) \right) \\
&= \lim_{s\searrow 1} \frac{1}{\rho^{qs}} \sum_{v\in \mathcal V_0} \sum_{j=0}^{p-1} \frac{\rho^j c_{s(\lambda), v; j} (1-\rho^{p(1-s)})}{1-\rho^{p(1-s)}} \sum_{b\in \mathcal V_0} \sum_{t=0}^{k-1} \frac{A_1 \cdots A_t(v,b)}{(\rho_1\cdots \rho_t)^{s}} (x^\Lambda_b)^{s\delta}\\
&= \frac{1}{\rho^q} \sum_{v, b \in \mathcal V_0} \sum_{j=0}^{p-1} \rho^j c_{s(\lambda), v; j}  \sum_{t=0}^{k-1} \frac{A_1 \cdots A_t(v,b)}{\rho_1\cdots \rho_t} (x^\Lambda_b)^\delta,
\end{align*}
which is finite and nonzero.  (The penultimate equality holds because the continuity of $F_v(s)$ at $s=1$ implies that $\lim_{s\searrow 1} \frac{1-\rho^{p(1-s)}}{\rho^{q s}} F_v(s) = 0$.)  Consequently, $ \pi_\tau (\chi_{[\lambda]}) |D|^{-\delta}$ is Connes measurable whenever $|\lambda| = q k$.  

If $|\lambda| = qk + t_0$ for some $t_0 > 0$, the same argument as above will show that $ \pi_\tau (\chi_{[\lambda]}) |D|^{-\delta}$ is Connes measurable; one simply has to take more care with the indexing of the sums.
\end{proof}

\begin{cor}
\label{cor:D-inv-Connes-mbl}
Under the hypotheses of Theorem \ref{thm:belong-trac-ideal}, $|D|^{-\delta}$ is Connes measurable, and its Dixmier trace is positive.
\end{cor}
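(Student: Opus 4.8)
The plan is to deduce the corollary directly from Theorem \ref{thm:belong-trac-ideal} by realizing $|D|^{-\delta}$ as a finite sum of the operators $\pi_\tau(\chi_{[\lambda]})|D|^{-\delta}$ already treated there. First I would observe that every infinite path in $X_{\mathcal B_\Lambda}$ has a unique range vertex in $\mathcal V_0$, so the cylinder sets $\{[v] : v \in \mathcal V_0\}$ form a finite clopen partition of $X_{\mathcal B_\Lambda}$. Hence the constant function $1$ decomposes as $\sum_{v \in \mathcal V_0}\chi_{[v]}$, each summand being Lipschitz since $[v]$ is clopen. Reading off Equation \eqref{eq:representation} shows that $\pi_\tau$ is unital, i.e.\ $\pi_\tau(1)$ is the identity operator on $\mathcal H$, and therefore
\[|D|^{-\delta} = \pi_\tau(1)\,|D|^{-\delta} = \sum_{v \in \mathcal V_0}\pi_\tau(\chi_{[v]})\,|D|^{-\delta}.\]

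Next I would invoke Theorem \ref{thm:belong-trac-ideal} with $\lambda = v$ for each $v \in \mathcal V_0$: every summand lies in $\mathcal M_{1,\infty}$, is Connes measurable, and has finite positive Dixmier trace. Since $\mathcal M_{1,\infty}$ is an ideal and $\mathcal V_0$ is finite, the sum $|D|^{-\delta}$ again lies in $\mathcal M_{1,\infty}$. For an arbitrary generalized limit $\omega$, linearity of the Dixmier trace $\mathcal T_\omega$ gives
\[\mathcal T_\omega(|D|^{-\delta}) = \sum_{v \in \mathcal V_0}\mathcal T_\omega\bigl(\pi_\tau(\chi_{[v]})|D|^{-\delta}\bigr) = \sum_{v \in \mathcal V_0}\mathcal T\bigl(\pi_\tau(\chi_{[v]})|D|^{-\delta}\bigr),\]
where the second equality uses that each term is $\omega$-independent by measurability. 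The right-hand side does not depend on $\omega$, so $|D|^{-\delta}$ is Connes measurable; being a finite sum of finite positive quantities, its Dixmier trace is finite and strictly positive.

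There is no serious obstacle here: the statement is a routine consequence of Theorem \ref{thm:belong-trac-ideal}. The only point requiring a word of justification is that Connes measurability is preserved under finite sums, which follows at once from the linearity of each $\mathcal T_\omega$ and the finiteness of $\mathcal V_0$; no estimate beyond those in the proof of Theorem \ref{thm:belong-trac-ideal} is needed. As a byproduct, since $|D|^{-\delta}$ is positive and Connes measurable, its common Dixmier trace value may be computed by the residue formula \eqref{eq:def-Dixmier-trace}, namely $\mathcal T(|D|^{-\delta}) = \lim_{s\searrow 1}(s-1)\text{Tr}(|D|^{-\delta s})$.
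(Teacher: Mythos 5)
Your proof is correct and follows essentially the same route as the paper: both decompose $X_{\mathcal B_\Lambda} = \bigsqcup_{v\in \mathcal V_0} [v]$, note that $\pi_\tau(\chi_{X_{\mathcal B_\Lambda}}) = 1$, and apply Theorem \ref{thm:belong-trac-ideal} to each $\pi_\tau(\chi_{[v]})|D|^{-\delta}$ together with linearity of the Dixmier traces and finiteness of $\mathcal V_0$. Your write-up is simply a more explicit version of the paper's argument, spelling out why Connes measurability (in the $\omega$-independence sense used here) passes to finite sums.
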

\begin{proof}
The fact that $
 {\mathcal T} \Big( \pi_\tau (\chi_{[\lambda]}) |D|^{-\delta} \Big) $ exists and is finite for all  $ \lambda \in F \mathcal B_\Lambda$
implies that $\mathcal T(\pi_\tau(\chi_{X_{\mathcal B_\Lambda}}) | D|^{-\delta}) $ is also finite, since $X_{\mathcal B_\Lambda} = \bigsqcup_{v\in \mathcal V_0} [v]$ and $\mathcal V_0$ is finite. Moreover, $\pi_\tau(\chi_{X_{\mathcal B_\Lambda}}) = 1 \in B(\H)$.  Observing that $|D|^{-\delta}$ is positive, and that $\mathcal T(\pi_\tau(\chi_{[v]})|D|^{-\delta})$ is positive for each $v \in \mathcal V_0$, completes the proof.
\end{proof}

 \begin{rmk}
\label{rmk:measure-indep-of-tau}
Observe that the constants $L_1 , L_2$ (and therefore the Dixmier trace $\mathcal T\left( \pi_\tau(\chi_{[\lambda]})|D|^{-\delta} \right) = 4 L_1 L_2$) are independent of the choice function $\tau$. For each $\delta \in (0,1)$, 
we can therefore use the Dixmier trace to define a  function $\mu_{\delta}$ on the Borel $\sigma$-algebra of $X_{\mathcal{B}_\Lambda}$:
\begin{equation} 
\label{eq:def-measure-dix-trace}\mu_{\delta}([\lambda]) = \hbox{ Dixmier trace of $\Big( \pi_\tau(\chi_{[\lambda]})|D|^{-\delta}\Big)$}=\mathcal T(\pi_\tau(\chi_{[\lambda]})|D|^{-\delta}) = \lim_{s\searrow 1} (s-1) \text{Tr} ((\pi_\tau(\chi_{[\lambda]}) |D|^{-\delta})^s).\end{equation}
 \end{rmk}

\begin{example} (Continuation of Examples 	\ref{ex:McNamara},	\ref{ex:abscissa-of-convergence})
	\label{ex:belonging-to-the-Dixmier-ideal}
	For this example, we can show directly that  $\pi_\tau(|D|^{-\delta}) \in \mathcal M_{1, \infty}$ .
By Equation \eqref{eq:zeta_w}, the  singular values of  $\frac12 \pi_\tau(|D|^{-\delta})$ are precisely its eigenvalues, which are {
\[
1 \hbox{ with multiplicity 1};\ (\frac1{2}), \hbox{ with multiplicity 2};  \ldots (\frac1{2^k}), \hbox{ with multiplicity $2^k$};\dots
\]
Therefore, for say $N_n = 2^{n+1}-1$:
\[
\begin{split}
\frac2{\ln(N_n)}\sum_{k = 0}^{N_n} \, (eigenvalues \ of \  |D|^{-\delta})
= 2\, \frac{(n+1)}{\ln(2^{n+1}-1)}.
\end{split}
\]
So  $\limsup_{N \to + \infty}\frac1{\ln(N)}\sum_{k = 0}^{N} \, (eigenvalues \ of \ |D|^{-\delta}) < + \infty$,
and  $|D|^{-\delta}$  is in the Dixmier-Macaev  ideal $\mathcal{M}_{1, \infty}$. Furthermore, with the methods  of Theorem  \ref{thm:belong-trac-ideal}, we see that $|D|^{-\delta}$  is measurable in the sense of Connes and that the Dixmier trace of  $|D|^{-\delta}$ is given by
 \[
 \begin{split} 
\lim_{s \searrow 1}(s-1) \sum_{k = 0}^{+\infty} \, (eigenvalues \ of \ |D|^{-\delta})^{s} &=\lim_{s \searrow 1}2\, (s-1) \sum_{k = 0}^{+\infty} \Big( \frac12 \Big)^{k\, s} \, 2^k\\
&=\lim_{s \searrow 1}2\, (s-1) \sum_{k = 0}^{+\infty} \Big( 2 \Big)^{k-ks}= \lim_{s \searrow 1}  \frac{2\, (s-1)}{1-2^{(1-s)}}=\frac2{\ln{2}}.
 \end{split}
 \]
 }
\end{example}

\begin{prop}
\label{pr:dixmier-trace-measure}
 Let $\Lambda$ be a finite, strongly connected $k$-graph; fix $\delta \in (0,1)$ such that $(\mathcal B_\Lambda, w_\delta)$ satisfies  Hypothesis \ref{hyp:diam-equal-eright}.
The function $\mu_{\delta}$ of Equation \eqref{eq:def-measure-dix-trace}
determines a unique finite measure on $X_{\mathcal{B}_\Lambda} \cong \Lambda^\infty$. That is,  the assignment 
  \[[\lambda]\to {\mu_{\delta}([\lambda])}, \hbox{ for every  $\lambda \in  F\mathcal{B}_\Lambda $},\]
determines a unique finite measure on $X_{\mathcal{B}_\Lambda}$. 
\end{prop}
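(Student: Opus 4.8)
The plan is to interpret $\mu_\delta$ as a premeasure on the algebra of clopen subsets of $X_{\mathcal B_\Lambda}$ and then apply the Carath\'eodory extension theorem. By Definition \ref{def-k-brat-diagrm-inf-path-space} the cylinder sets $[\lambda]$, $\lambda \in F\mathcal B_\Lambda$, are compact and open and form a basis for the topology of $X_{\mathcal B_\Lambda}$; since $X_{\mathcal B_\Lambda}$ is metrizable and compact, hence second countable, these cylinder sets generate the Borel $\sigma$-algebra. Because $X_{\mathcal B_\Lambda}$ is a Cantor set (Corollary \ref{cor:ultrametric-Cantor}) and any two cylinder sets are either disjoint or nested, every clopen subset is a finite disjoint union of cylinder sets, so the collection $\mathcal A$ of all such finite disjoint unions is a Boolean algebra of sets with $\sigma(\mathcal A)$ equal to the Borel $\sigma$-algebra.

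The crux of the argument, and the step I expect to require the most care, is the one-step additivity relation on cylinder sets. For $\lambda \in F\mathcal B_\Lambda$ I would first record the disjoint decomposition
\[ [\lambda] = \bigsqcup_{\substack{f \in \mathcal E_{|\lambda|+1} \\ r(f) = s(\lambda)}} [\lambda f], \]
a finite union because $\mathcal B_\Lambda$ has finitely many edges at each level; at the level of functions this reads $\chi_{[\lambda]} = \sum_{f} \chi_{[\lambda f]}$. Applying the $*$-representation $\pi_\tau$ and multiplying by the fixed operator $|D|^{-\delta}$ gives
\[ \pi_\tau(\chi_{[\lambda]})\,|D|^{-\delta} = \sum_{\substack{f \in \mathcal E_{|\lambda|+1} \\ r(f) = s(\lambda)}} \pi_\tau(\chi_{[\lambda f]})\,|D|^{-\delta}. \]
By Theorem \ref{thm:belong-trac-ideal} every operator appearing here lies in $\mathcal M_{1,\infty}$ and is measurable in the sense of Connes, so fixing any Dixmier trace $\mathcal T_\omega$ we have $\mathcal T_\omega(\pi_\tau(\chi_{[\eta]})|D|^{-\delta}) = \mu_\delta([\eta])$ for every finite path $\eta$, independent of $\omega$. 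Linearity of the functional $\mathcal T_\omega$ on $\mathcal M_{1,\infty}$ then yields
\[ \mu_\delta([\lambda]) = \sum_{\substack{f \in \mathcal E_{|\lambda|+1} \\ r(f) = s(\lambda)}} \mu_\delta([\lambda f]). \]
It is precisely here that the linearity of the Dixmier trace, combined with the Connes measurability supplied by Theorem \ref{thm:belong-trac-ideal}, does the essential work; one could instead verify this identity by hand from the explicit formula $\mu_\delta([\lambda]) = 4 L_1 L_2$ computed in that proof, but the functional-analytic route is cleaner.

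With one-step additivity in hand, the remaining steps are routine. Iterating the displayed relation shows that any two decompositions of a clopen set into cylinder sets admit a common refinement on which the assigned values agree, so $\mu_\delta$ extends to a well-defined, finitely additive, $[0,\infty)$-valued function on $\mathcal A$; its total mass $\mu_\delta(X_{\mathcal B_\Lambda}) = \sum_{v \in \mathcal V_0} \mu_\delta([v])$ is finite since $\mathcal V_0$ is finite and each term is finite by Theorem \ref{thm:belong-trac-ideal}. Finite additivity upgrades to countable additivity for free: if a clopen set is a countable disjoint union of nonempty clopen sets, its compactness together with the openness of the pieces forces all but finitely many pieces to be empty, so every countable decomposition within $\mathcal A$ is in fact finite. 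Thus $\mu_\delta$ is a finite premeasure on $\mathcal A$, and the Carath\'eodory--Hahn--Kolmogorov extension theorem produces a unique Borel measure on $X_{\mathcal B_\Lambda} \cong \Lambda^\infty$ extending it, uniqueness being automatic because $\mu_\delta$ is finite.
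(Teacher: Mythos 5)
Your proof is correct, but the way you establish additivity is genuinely different from the paper's. The paper works directly with the Dirichlet series: from the proof of Theorem \ref{thm:belong-trac-ideal} it has the explicit formula $\mu_{\delta}([\gamma]) = \lim_{s \searrow 1} 2(s-1)\sum_{\eta \in F_\gamma \mathcal B_\Lambda} w_\delta(\eta)^{\delta s}$, observes that finitely many initial terms of the sum can be discarded without changing the limit, and then, given $[\gamma] = \bigsqcup_{i=1}^N [\lambda_i]$, splits the sum over paths of length at least $L = \max_i|\lambda_i|$ according to which $\lambda_i$ each path extends; this yields finite additivity for an arbitrary disjoint decomposition in one stroke, using nothing about Dixmier traces beyond the already-computed residue limit. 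You instead prove the one-step identity $\mu_\delta([\lambda]) = \sum_f \mu_\delta([\lambda f])$ by combining the operator decomposition $\pi_\tau(\chi_{[\lambda]})|D|^{-\delta} = \sum_f \pi_\tau(\chi_{[\lambda f]})|D|^{-\delta}$ with the linearity of a fixed Dixmier trace $\mathcal T_\omega$ and the Connes measurability supplied by Theorem \ref{thm:belong-trac-ideal} (which is what lets you replace $\mathcal T_\omega$ by the residue limit $\mathcal T$ on each summand), and then iterate and pass to a common refinement. Both routes are sound: yours is shorter and more conceptual, outsourcing the work to the abstract theory (linearity of $\mathcal T_\omega$ plus the Connes--Moscovici characterization of measurability), and it is close in spirit to the Riesz-representation argument the paper sketches in the remark following Theorem \ref{thm:Dixmier-trace}; the paper's is more computational but self-contained, in that it never needs to invoke linearity of an abstract trace functional, only manipulations of the explicit zeta-type sum. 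The remaining structure --- the semiring/algebra of cylinder sets, the reduction of $\sigma$-additivity to finite additivity via compactness of clopen sets, and the Carath\'eodory extension --- is essentially identical in both arguments.
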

\begin{proof} This proof relies on Carath\'eodory's theorem \cite[Theorem A.1.3]{BDurrett}.  Notice that 
\[\mathcal{F}:=\{ [\lambda]: \lambda \in F\mathcal{B}_\Lambda \} \]
is closed under finite intersections (if $[\lambda] \cap [\gamma] \not= \emptyset$, then either $\lambda$ is a sub-path of $\gamma$ or vice versa, and thus (in the first case) $[\lambda] \cap [\gamma] = [\gamma]$), and 
\[ [\lambda]^c = \bigsqcup_{|\lambda_i| = |\lambda|, \lambda_i \not= \lambda} [\lambda_i].\]
In other words, the complement of any element of $\mathcal{F}$ can be written as a finite disjoint union of elements of $\mathcal{F}$.  Therefore $\mathcal{F}$ is a semiring of sets, so the fact that $\Lambda$ is finite means that the collection of all finite disjoint unions of cylinder sets $[\lambda]$, for $\lambda \in F\mathcal B_\Lambda$, is an algebra.

Since $\mathcal{F}$ generates the topology on $X_{\mathcal{B}_\Lambda}$, and $\mu_{\delta}([\gamma])$ is finite for all $[\gamma] \in \mathcal{F}$ by hypothesis, Carath\'eodory's theorem tells us that in order to show that $\mu_{\delta}$ determines a measure on $X_{\mathcal{B}_\Lambda}$, we merely need to check that $\mu_{\delta}$ is $\sigma$-additive on $\mathcal{F}$.  In fact, since the cylinder sets $[\gamma]$ are clopen, the fact that $X_{\mathcal{B}_\Lambda}$ is compact means that it is enough to check that $\mu_{\delta}$ is finitely additive on $\mathcal{F}$.  

Recall that 
 in calculating 
 \[\mu_{\delta}([\gamma])=\lim_{s\searrow 1} 2 (s-1) \sum_{\lambda \in F_\gamma \mathcal B_\Lambda} w_\delta(\lambda)^{\delta s}\]
  we can ignore finitely many initial terms in the sum.  Thus,  for any $L \in \N$, 
\begin{equation}
\label{eq:mu-without-finite-prefix}
\mu_{\delta}([\gamma]) = \lim_{s \searrow 1}2(s-1){\sum_{\substack{\lambda \in F_\gamma \mathcal B_\Lambda: \, |\lambda| \geq L}} w_\delta (\lambda)^{\delta s}}.
\end{equation}
Now, suppose that $[\gamma] = \bigsqcup_{i=1}^N [\lambda_i]$.  Write $L = \max_i |\lambda_i|$,  and for each $i$, write $[\lambda_i] = \bigsqcup_{\ell} [\lambda_{i, \ell}]$ where $|\lambda_{i, \ell}| = L$.  
If $\lambda\in F_\gamma \mathcal{B}_\Lambda$ with $|\lambda| \geq L$, then $\lambda_i$ is a sub-path  of $\lambda$ for precisely one $i$, and hence
\begin{equation*}
\begin{split}
\mu_{\delta}([\gamma]) &= \lim_{s \searrow 1}2(s-1) {\sum_{\substack{\lambda \in F_\gamma\mathcal{B}_\Lambda \\ |\lambda| \geq L}} w_\delta (\lambda)^{\delta s}}= \lim_{s \searrow 1} 2(s-1) \sum_i{\sum_{\lambda \in F_{\lambda_i} \mathcal{B}_\Lambda } w_\delta (\lambda)^{\delta s}}\\
&= \sum_i \mu_{\delta}([\lambda_i])= \sum_{i, \ell} \mu_{\delta}([\lambda_{i, \ell}]) . 
\end{split}
\end{equation*}
For each fixed $i$, $\bigsqcup_\ell [\lambda_{i, \ell}] = [\lambda_i]$, so the same argument will show that $\mu_{\delta}([\lambda_i]) = \sum_{\ell} \mu_{\delta}([\lambda_{i, \ell}])$.  Thus, 
\[ \mu_{\delta}([\gamma]) = \sum_{i, \ell} \mu_{\tau,\delta}([\lambda_{i, \ell}]) = \sum_i \mu_{\delta}([\lambda_i]).\]
Since $\mu_{\delta}$ is finitely additive on $\mathcal{F}$, Carath\'eodory's theorem allows us to conclude that it gives a well-defined finite measure on $X_{\mathcal{B}_\Lambda}$.
\end{proof} 

{

Our next main result establishes that under our standard hypotheses on $\Lambda,$ if $\tau$ is a choice function and $f\in C(X_{{\mathcal B}_{\Lambda}})$ is a continuous function, then  $\pi_\tau (f) |D|^{-\delta}$ is Connes measurable. {Before beginning the proof, we make a few remarks which we will invoke regularly in the proof:
\begin{enumerate}
\item Since the Lipschitz functions are dense in $C(X_{\mathcal B_\Lambda})$, we can extend the representation $\pi_\tau$ to a representation of $C(X_{\mathcal B_\Lambda})$ on $\H$, which we will continue to denote by $\pi_\tau$.
\item Recall  (from the proof of Theorem \ref{thm:belong-trac-ideal}) that $\pi_\tau(\chi_{[\lambda]}) |D|^{-t} = |D|^{-t} \pi_\tau(\chi_{[\lambda]})$ for any $t > 0$ and any $\lambda \in F\mathcal B_\Lambda$.  Consequently, $|D|^{-t}$ also commutes with $C(X_{\mathcal B_\Lambda})$.
\end{enumerate}
 
\begin{thm}
	Let $\Lambda$ be a finite,  strongly connected $k$-graph; fix $\delta \in (0,1)$ such that Hypothesis \ref{hyp:diam-equal-eright} holds for $(\mathcal B_\Lambda, w_\delta)$, and fix a choice function $\tau$.  Let $\mu_{\delta} $ be the Borel measure on  $X_{{\mathcal B}_{\Lambda}}$ described in Proposition \ref{pr:dixmier-trace-measure}.
	Then $\pi_\tau(f)|D|^{-\delta}$ is Connes measurable for all  $f \in C(X_{\mathcal B_\Lambda})$, and the  Dixmier trace of $\pi_\tau(f)|D|^{-\delta}$ is given by
	$$
	\mathcal{T}\Big(\pi_\tau(f)|D|^{-\delta}\Big)=\lim_{s \searrow 1}(s-1)\text{Tr}( \pi_{\tau}(f)|D|^{-\delta s})\;=\;\int_{X_{{\mathcal B}_{\Lambda}}}f(x)\, d\mu_{\delta}(x).$$
	 \label{thm:Dixmier-trace}	
\end{thm}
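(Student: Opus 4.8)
The plan is to bootstrap from Theorem~\ref{thm:belong-trac-ideal}, which already handles the indicator functions $\chi_{[\lambda]}$, to arbitrary $f \in C(X_{\mathcal B_\Lambda})$ by a density argument, and then to promote the resulting residue formula to a genuine statement about Connes measurability. For $s > 1$ I would define the linear functional
\[ \Phi_s(f) = (s-1)\,\text{Tr}\big(\pi_\tau(f)|D|^{-\delta s}\big). \]
Since $\pi_\tau(f) = \bigoplus_{\gamma \in F\mathcal B_\Lambda} \text{diag}\big(f(\tau_+(\gamma)), f(\tau_-(\gamma))\big)$ and $|D|^{-\delta s}$ acts as the scalar $\diam[\gamma]^{\delta s} = w_\delta(\gamma)^{\delta s}$ on the $\gamma$-block (Lemma~\ref{lem:eigenvalues-of-D} and Hypothesis~\ref{hyp:diam-equal-eright}), these two operators commute and
\[ \Phi_s(f) = (s-1)\sum_{\gamma \in F\mathcal B_\Lambda} \big(f(\tau_+(\gamma)) + f(\tau_-(\gamma))\big)\, w_\delta(\gamma)^{\delta s}. \]
In particular $|\Phi_s(f)| \leq 2\|f\|_\infty (s-1)\zeta_\delta(\delta s)$, and since $(s-1)\zeta_\delta(\delta s) = \tfrac12 (s-1)\,\text{Tr}(|D|^{-\delta s})$ converges as $s \searrow 1$ to $\tfrac12\mathcal T(|D|^{-\delta}) < \infty$ by Corollary~\ref{cor:D-inv-Connes-mbl}, the family $\{\Phi_s\}_{1 < s \leq 2}$ is uniformly bounded: $|\Phi_s(f)| \leq C\|f\|_\infty$ for a constant $C$ independent of $s$.

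First I would prove that $\lim_{s \searrow 1}\Phi_s(f) = \int_{X_{\mathcal B_\Lambda}} f\, d\mu_\delta$ for every $f$. For $f = \chi_{[\lambda]}$ this is exactly the content of Theorem~\ref{thm:belong-trac-ideal} together with the definition \eqref{eq:def-measure-dix-trace} of $\mu_\delta$, because $\chi_{[\lambda]}^s = \chi_{[\lambda]}$ forces $(\pi_\tau(\chi_{[\lambda]})|D|^{-\delta})^s = \pi_\tau(\chi_{[\lambda]})|D|^{-\delta s}$; by linearity the identity $\lim_{s\searrow 1}\Phi_s(g) = \int g\, d\mu_\delta$ then holds for every locally constant $g$, i.e.~every finite $\C$-linear combination of cylinder indicators. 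Since $X_{\mathcal B_\Lambda}$ is a Cantor set, the level-$n$ cylinders partition it into finitely many clopen sets of diameter tending to $0$, so the locally constant functions are uniformly dense in $C(X_{\mathcal B_\Lambda})$. Given $f$ and $\varepsilon > 0$, choosing locally constant $g$ with $\|f-g\|_\infty < \varepsilon$ and using the uniform bound on $\Phi_s$ together with finiteness of $\mu_\delta$ (Proposition~\ref{pr:dixmier-trace-measure}) yields
\[ \limsup_{s\searrow 1}\Big| \Phi_s(f) - \int_{X_{\mathcal B_\Lambda}} f\, d\mu_\delta\Big| \leq \big(C + \mu_\delta(X_{\mathcal B_\Lambda})\big)\varepsilon, \]
and letting $\varepsilon \to 0$ gives the middle equality of the theorem.

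It remains to show that $\pi_\tau(f)|D|^{-\delta}$ is Connes measurable with Dixmier trace equal to this residue. The difficulty is that $\pi_\tau(f)|D|^{-\delta}$ is not positive, so the residue criterion of \cite{lord-sedaev-sukochev, connes-moscovici} does not apply directly. To circumvent this, I would first treat real-valued $f$ by writing $f = g - \|f\|_\infty\1$ with $g := \|f\|_\infty\1 + f \geq 0$. Both $\pi_\tau(g)|D|^{-\delta}$ and $\|f\|_\infty|D|^{-\delta}$ are positive; the latter is Connes measurable by Corollary~\ref{cor:D-inv-Connes-mbl}, and for the former one computes $(\pi_\tau(g)|D|^{-\delta})^s = \pi_\tau(g^s)|D|^{-\delta s}$, so that $\lim_{s\searrow 1}(s-1)\text{Tr}\big((\pi_\tau(g)|D|^{-\delta})^s\big) = \lim_{s\searrow 1}\Phi_s(g^s)$. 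Because $g^s \to g$ uniformly as $s \searrow 1$ (here $g \geq 0$ is essential, so that $g^s$ is defined) and $\{\Phi_s\}$ is uniformly bounded, $\Phi_s(g^s)$ and $\Phi_s(g)$ share the limit $\int g\, d\mu_\delta$; the existence of this limit makes $\pi_\tau(g)|D|^{-\delta}$ Connes measurable by \cite{lord-sedaev-sukochev, connes-moscovici}. As Connes measurable operators form a linear space on which every Dixmier trace restricts to a single functional, $\pi_\tau(f)|D|^{-\delta} = \pi_\tau(g)|D|^{-\delta} - \|f\|_\infty|D|^{-\delta}$ is Connes measurable with $\mathcal T(\pi_\tau(f)|D|^{-\delta}) = \int g\, d\mu_\delta - \|f\|_\infty\,\mu_\delta(X_{\mathcal B_\Lambda}) = \int f\, d\mu_\delta$, using $\mathcal T(|D|^{-\delta}) = \mu_\delta(X_{\mathcal B_\Lambda})$; splitting an arbitrary $f$ into real and imaginary parts then finishes the proof. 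I expect this last measurability step---in particular the passage from $g^s$ back to $g$, and the bookkeeping that keeps the estimates uniform in the generalized limit $\omega$---to be the main obstacle, since all of the quantitative analysis has already been absorbed into Theorem~\ref{thm:belong-trac-ideal}.
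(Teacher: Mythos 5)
Your proposal is correct, and its overall architecture coincides with the paper's proof: establish the residue formula $\lim_{s\searrow 1}(s-1)\mathrm{Tr}(\pi_\tau(f)|D|^{-\delta s}) = \int f\, d\mu_\delta$ first on cylinder indicators (via Theorem \ref{thm:belong-trac-ideal} and the definition of $\mu_\delta$), extend by linearity to simple (locally constant) functions, push to all of $C(X_{\mathcal B_\Lambda})$ by uniform density together with a bound uniform in $s$ near $1$, and finally obtain Connes measurability by reducing to positive operators and using linearity of Dixmier traces (the paper splits $f = f_+ - f_-$; your shift $g = f + \|f\|_\infty$ is an equally valid splitting). The one genuine difference is how you pass from $\mathrm{Tr}(\pi_\tau(g)|D|^{-\delta s})$ to $\mathrm{Tr}\big((\pi_\tau(g)|D|^{-\delta})^s\big)$ for $g \geq 0$: the paper invokes the proof of Theorem 8.6.5 of \cite{lord-book} (with the substitution $t = 1/(s-1)$), whereas you compute directly that $(\pi_\tau(g)|D|^{-\delta})^s = \pi_\tau(g^s)|D|^{-\delta s}$ --- both operators being simultaneously diagonal in the basis $\{\delta_\gamma \otimes e_i\}$ --- and then use the uniform convergence $g^s \to g$ on $[0,\|g\|_\infty]$ together with the uniform bound $|\Phi_s(h)| \leq C\|h\|_\infty$ to identify the two limits. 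This makes your argument more self-contained and elementary at precisely the step the paper outsources; what the citation buys the paper is independence from the explicit diagonal structure, but since that structure is available here, your route is arguably cleaner. Your remaining ingredients --- the uniform bound coming from $(s-1)\zeta_\delta(\delta s) \to \tfrac12\mathcal{T}(|D|^{-\delta})$, and the observation that measurability in the sense of ``all Dixmier traces agree'' is preserved under linear combinations --- correspond to what the paper accomplishes with its explicit $\epsilon/4$ bookkeeping and the trace-ideal inequality $|\mathrm{Tr}(TK)| \leq \mathrm{Tr}(|K|)\,\|T\|$.
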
 \begin{proof} 
 Replacing $t$ with $\frac{1}{s-1}$ in the proof of \cite[Theorem 8.6.5]{lord-book}, and applying this proof to the setting $\omega = \lim_{t \to \infty}, \mathcal M = B(\H), \tau = \text{Tr}, A = |D|^{-\delta}$ implies that for any  $f \in C(X_{\mathcal B_\Lambda})_+$, if $\lim_{s\searrow 1} (s-1) \text{Tr}( \pi_\tau(f) |D|^{-\delta s})$ exists and is finite, then (since $\pi_\tau(f)$ and $|D|^{-r}$ commute for any $r > 0$)
\[
\lim_{s \searrow 1} (s-1) \text{Tr}((\pi_\tau(f) |D|^{-\delta})^s)= \lim_{s\searrow 1} (s-1) \text{Tr}( \pi_\tau(f) |D|^{-\delta s}).\]
So for $f$ non-negative and continuous, it also follows from \cite{connes-moscovici} that if $\lim_{s\searrow 1} (s-1) \text{Tr}( \pi_\tau(f) |D|^{-\delta s})$ exists and is finite, then $\pi_\tau(f)|D|^{-\delta}$ is Connes measurable, and its Dixmier trace is $ \lim_{s\searrow 1} (s-1) \text{Tr}( \pi_\tau(f) |D|^{-\delta s})$.

Note that by Theorem \ref{thm:belong-trac-ideal}, if  $\phi$ is a simple function on $X_{{\mathcal B}_{\Lambda}}$ of the form $\phi=\sum_{j=1}^m\alpha_j\chi_{[\lambda_j]}$, then linearity of  the integral combines with Proposition  \ref{pr:dixmier-trace-measure}, our remarks in the first paragraph of this proof, and the definition of $\mu_\delta$ in Equation \eqref{eq:def-measure-dix-trace} 
to show that
$$   \lim_{s \searrow 1} (s-1) \text{Tr}(\pi_\tau(\phi)|D|^{-\delta s}) = 
\sum_{j=1}^m \alpha_j \lim_{s \searrow 1}(s-1)\text{Tr}(\pi_{\tau}(\chi_{[\lambda_j]}) |D|^{-\delta s})\;=\;\int_{X_{{\mathcal B}_{\Lambda}}}\phi(x)\, d\mu_{\delta}(x).$$

Fix $\epsilon\in (0,1)$ and $f\in C(X_{{\mathcal B}_{\Lambda}})$.		There  exists $\eta_1>0$ such that whenever $s\in (1, 1+\eta_1),$
$$| (s-1)\text{Tr}(|D|^{-s\delta})-\mathcal{T}(|D|^{-\delta })|<\epsilon.$$
By the Stone-Weierstrass Theorem, the simple functions made from characteristic functions corresponding to finite paths in $F{\mathcal B}_{\Lambda}$  are dense in $C(X_{{\mathcal B}_{\Lambda}})$ so given our fixed continuous function $f$ there is a simple function $\phi$ of the desired type with 
$\|f-\phi\|_{\text{sup}}<\frac{\epsilon}{4(\mathcal{T}(|D|^{-\delta })+1)},$ 
and hence
$$\left|\int_{X_{{\mathcal B}_{\Lambda}}}\phi(x) d\mu_{\delta}(x)-\int_{X_{{\mathcal B}_{\Lambda}}}f(x) d\mu_{\delta}(x)\right |\;\leq \;\int_{X_{{\mathcal B}_{\Lambda}}}\|f-\phi\|_{\text{sup}}\, d\mu_{\delta}(x)<\int_{X_{{\mathcal B}_{\Lambda}}}\frac{\epsilon}{4(\mathcal{T}(|D|^{-\delta })+1)}\cdot 1\,  d\mu_{\delta}(x)<\frac{\epsilon}{4}.$$
By our remarks at the beginning of this proof, 
 there exists $\eta_2>0$ such that if 
$s\in (1, 1+\eta_2),$
$$\left|(s-1)\text{Tr}(\pi_{\tau}(\phi)|D|^{-\delta s}) -\int_{X_{{\mathcal B}_\Lambda}}\phi(x)\, d\mu_{\delta}(x)\right|<\frac{\epsilon}{4}.$$ 

We now let $\eta=\text{min}\{\eta_1,\eta_2\}.$  Suppose that $s\in (1, 1+\eta).$  Then, 
\[
\begin{split}
&\left|(s-1)\text{Tr}(\pi_{\tau}(f)|D|^{-\delta s}) -  \int_{X_{{\mathcal B}_\Lambda}}f(x)d\mu_{\delta}(x)\right| 
\leq\;|(s-1)\text{Tr}(\pi_{\tau}(f)|D|^{-\delta s}) -(s-1)\text{Tr}(\pi_{\tau}(\phi)|D|^{-\delta s})|\\
&\quad +\;\left|(s-1)\text{Tr}(\pi_{\tau}(\phi)|D|^{-\delta s})- \int_{X_{{\mathcal B}_{\Lambda}}}\phi(x)d\mu_{\delta}(x)\right| 
\quad
+\;\left|\int_{X_{{\mathcal B}_{\Lambda}}}\phi(x)d\mu_{\delta}(x)-\int_{X_{{\mathcal B}_{\Lambda}}}f(x)d\mu_{\delta}(x)\right|\\
&=\;{|(s-1)\text{Tr}(\pi_{\tau}(f-\phi)|D|^{-s\delta})| }
+\;\frac{\epsilon}{4}+\frac{\epsilon}{4}
\leq (s-1)\,\text{Tr}(|D|^{-s\delta})\cdot  \|\pi_{\tau}(f-\phi)\|_{{B}(\H)} +\frac{\epsilon}{2}\\
&\leq (\mathcal{T}(|D|^{-\delta })+\epsilon)\cdot \frac{\epsilon}{4(\mathcal{T}(|D|^{-\delta })+1)}+\frac{\epsilon}{2}\;\leq\;\frac{\epsilon}{4}+\frac{\epsilon^2}{4}+\frac{\epsilon}{2}<\epsilon.
\end{split}\]

In the penultimate inequality we used the fact that the set of trace class operators is an ideal in $B(\H),$ and if  $K$ is a trace-class operator and $T \in B(\H),$ 
$|\text{Tr}(T K )|\leq \text{Tr}(|K|)\cdot \|T\|_{{B}(\H)}$ \cite[Page 218, Ex. 28a]{Reed-Simon-I}.
Thus we have established that 
\begin{equation}
\mathcal{T}(\pi_{\tau}(f)|D|^{-s\delta})=\lim_{s \searrow 1}(s-1)\text{Tr}(\pi_{\tau}(f)|D|^{-s\delta})=\int_{X_{{\mathcal B}_{\Lambda}}}f d\mu_{\delta}(x)
\label{eq:integral}
\end{equation} for any  $f  \in C(X_{\mathcal B_\Lambda}).$
As indicated at the beginning of the proof,  for any non-negative function $f \in C(X_{\mathcal B_\Lambda})_+$, $\pi_{\tau}(f)|D|^{-\delta}$ is Connes measurable and Equation \eqref{eq:integral} computes its Dixmier trace.  The linearity of the Dixmier trace, combined with the fact that any $f \in C(X_{\mathcal B_\Lambda})$ can be written as the difference of two non-negative continuous functions, $f = f_+ - f_-$, now implies that for any $f \in C(X_{\mathcal B_\Lambda})$, $\pi_{\tau}(f)|D|^{-\delta}$ is also Connes measurable, and that Equation \eqref{eq:integral} gives  the Dixmier trace of $\pi_\tau(f)|D|^{-\delta}$ for all $f \in C(X_{\mathcal B_\Lambda})$. 
\end{proof}
}

{
\begin{rmk} 
Proposition \ref{pr:dixmier-trace-measure} and Theorem \ref{thm:Dixmier-trace}
can also be deduced by following the argument indicated in \cite{kesse-samuel}.
Since  $\pi_\tau(|D|^{-\delta})$ is in the Dixmier-Macaev ideal $\mathcal{M}_{1, \infty}$ by Theorem  \ref{thm:belong-trac-ideal}, we have $\pi_\tau(f) |D|^{-\delta} \in \mathcal{M}_{1, \infty}$, for all $f \in C(X_{\mathcal{B}_\Lambda})$.
For a fixed generalized limit $\omega,$ the Dixmier trace functional ${\mathcal D}_\omega:C(X_{\mathcal{B}_\Lambda}) \to \C $, defined by  ${\mathcal D}_\omega\Big(f\Big) : = {\mathcal T}_\omega\Big(  \pi_\tau(f) |D|^{-\delta} \Big) $  is bounded, see e.g.~\cite[page 1826]{kesse-samuel}. Now  the Riesz representation theorem for linear functionals on $C(X_{\mathcal{B}_\Lambda})$ implies that  there exists a finite  measure $\mu_\omega$ (also possibly dependent on $\tau$ and $ \delta$) on $X_{\mathcal{B}_\Lambda}$ such that (see \cite[page 1826]{kesse-samuel}) 
	\[
{\mathcal D}_\omega\Big(f\Big)=\int_{X_{\mathcal{B}_\Lambda}} f\, d\mu_\omega,\quad\forall f\in C(X_{\mathcal{B}_\Lambda}).
\]
But by the  Carath\'eodory/Kolmogorov
extension theorem,  the measure $\mu_\omega$ is determined by its values on cylinder sets. This evaluation on cylinder sets is (by Remark \ref{rmk:measure-indep-of-tau})    independent of $\tau$ and $\omega$; in other words, $\mu_\omega=\mu_\delta$ for all $\omega$. Therefore we get
\[
{\mathcal D}_\omega\Big(f\Big)=\int_{X_{\mathcal{B}_\Lambda}} f\, d\mu_{\delta},\quad\forall f \in C(X_{\mathcal{B}_\Lambda}),\quad\hbox{ for all generalized limits }\omega.
\]
\end{rmk}

}

\begin{rmk}
\label{rmk:zeta-regular}
Theorem \ref{thm:Dixmier-trace}	also shows that the Cantor set 
 $X_{{\mathcal B}_{\Lambda}}$ is $\zeta$-regular in the sense of Definition 11 of \cite{pearson-bellissard}.
This is an immediate corollary of Theorem \ref{thm:Dixmier-trace}, Corollary \ref{cor:D-inv-Connes-mbl}, and the definition of $\zeta$-regularity, together with the elementary observation that the limit of the quotient is the quotient of the limits if the latter exist.
\end{rmk} 
 
	Our next step will be the determination of the  measure $ \mu_{\delta}$ on $X_{\mathcal{B}_\Lambda}$, up to renormalization.

\begin{thm}
\label{thm:dixmier-aHLRS}
Let $\Lambda$ be a finite, strongly connected $k$-graph for which Lemma \ref{lem:weight-diam} holds. Write  $A_i$ for the $i$-th adjacency matrix of $\Lambda$ and  suppose that  $A = A_1 \cdots A_k$ is  irreducible. For any $\delta \in (0,1)$, the normalization $\nu_{ \delta}$ of the measure $\mu_{ \delta}$  on $X_{\mathcal{B}_\Lambda}$ defined by
 \begin{equation}
 \label{eq:renormalization}
 \nu_{\delta}(O) = \frac{\mu_{\delta}(O)}{\mu_{\delta}(X_{\mathcal B_\Lambda}) }=\frac{\mathcal{T}(\pi_\tau(\chi_O) |D|^{-\delta})}{ \mathcal{T}(|D|^{-\delta})
}\hbox{ for every Borel set $O$ of $X_{\mathcal{B}_\Lambda}$ }\end{equation}
  agrees with the measure $M$ introduced in Proposition~8.1 of \cite{aHLRS}.  
  In particular, $\nu_{ \delta}$ is a probability measure which is independent of the choice of  $\delta$.
\end{thm}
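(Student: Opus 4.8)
The plan is to compare the two measures on cylinder sets and then invoke uniqueness. Both $\nu_\delta$ and $M$ are Borel probability measures on $X_{\mathcal B_\Lambda}\cong\Lambda^\infty$, and by the Carath\'eodory/Kolmogorov extension theorem---exactly as in the proof of Proposition \ref{pr:dixmier-trace-measure}---such a measure is determined by its values on the cylinder sets $[\lambda]$, $\lambda\in F\mathcal B_\Lambda$. Since $M(X_{\mathcal B_\Lambda})=\sum_{v\in\mathcal V_0}x^\Lambda_v=1$ by \eqref{eq:M-measure}, it therefore suffices to prove that for every $\lambda\in F\mathcal B_\Lambda$ with $|\lambda|=qk+t$ (and $0\le t\le k-1$) one has
\[
\nu_\delta([\lambda])=\frac{\mu_\delta([\lambda])}{\mu_\delta(X_{\mathcal B_\Lambda})}=(\rho_1\cdots\rho_t)^{-(q+1)}(\rho_{t+1}\cdots\rho_k)^{-q}\,x^\Lambda_{s(\lambda)}=M([\lambda]).
\]

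The crux is to extract the precise $\lambda$-dependence of $\mu_\delta([\lambda])=\mathcal T\big(\pi_\tau(\chi_{[\lambda]})|D|^{-\delta}\big)$ from the residue computation underlying Theorem \ref{thm:belong-trac-ideal}. I would start from \eqref{eq:def-measure-dix-trace} together with the spectral decomposition of the irreducible matrix $A=A_1\cdots A_k$: since $A$ is irreducible with period $p$, its peripheral eigenvalues are exactly $\rho\omega_0,\dots,\rho\omega_{p-1}$ (with $\omega_i$ the $p$th roots of unity and $\omega_0=1$), each algebraically simple, so that $A^n(a,b)=\sum_{i=0}^{p-1}(\rho\omega_i)^nP_i(a,b)+O(\tau^n)$ for the rank-one spectral projections $P_i$ and some $\tau<\rho$. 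The Dirichlet series $\sum_n A^n(a,b)\rho^{-ns}$ then has a simple pole at $s=1$, and the factor $(s-1)$ in \eqref{eq:def-measure-dix-trace} kills every non-Perron contribution: for $i\ne0$ one has $1-\rho^{1-s}\omega_i\to 1-\omega_i\ne0$ as $s\searrow1$, whereas the $i=0$ term produces $\lim_{s\searrow1}(s-1)(1-\rho^{1-s})^{-1}=1/\ln\rho$. Hence only the Perron projection $P_0=\dfrac{x^\Lambda y^{T}}{y^{T}x^\Lambda}$ survives, where $y$ is the strictly positive left Perron eigenvector of $A$.

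Substituting $P_0(a,b)=x^\Lambda_a\,y_b/(y^{T}x^\Lambda)$ back into the formula for $\mu_\delta([\lambda])$ and summing the remaining finite geometric factors---those coming from the weight \eqref{eq:w-delta} and from the partial products $A_1\cdots A_t$ acting on the vector $b\mapsto(x^\Lambda_b)^\delta$---should collapse the entire $\lambda$-dependence into $x^\Lambda_{s(\lambda)}$ times the color-graded powers of the $\rho_i$. That is, I expect
\[
\mu_\delta([\lambda])=K_\delta\,(\rho_1\cdots\rho_t)^{-(q+1)}(\rho_{t+1}\cdots\rho_k)^{-q}\,x^\Lambda_{s(\lambda)},
\]
where $K_\delta$ depends only on $\delta$, on $p$, and on the global Perron--Frobenius data $(x^\Lambda,y,\rho_1,\dots,\rho_k)$, but not on $\lambda$. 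Comparing with \eqref{eq:M-measure} gives $\mu_\delta([\lambda])=K_\delta\,M([\lambda])$ for every cylinder; summing over $v\in\mathcal V_0$ yields $\mu_\delta(X_{\mathcal B_\Lambda})=K_\delta$, and dividing gives $\nu_\delta([\lambda])=M([\lambda])$. As the right-hand side does not involve $\delta$, the normalized measure $\nu_\delta$ is independent of $\delta$ and is a probability measure because $M$ is.

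The main obstacle is the bookkeeping in this last step. First, one must confirm that the subdominant peripheral eigenvalues $\rho\omega_i$ ($i\ne0$), which are present precisely when the period $p>1$, contribute nothing to the $s\searrow1$ residue; this is exactly the place where working with an \emph{irreducible} (rather than primitive, as in \cite{julien-savinien}) matrix $A$ forces the spectral-projection argument above in lieu of a naive limit of $A^n/\rho^n$. Second, for a general cylinder with $t\ne0$ one must track the color-dependent geometric prefactors carefully enough to recover the exact exponents $-(q+1)$ and $-q$ of \eqref{eq:M-measure}, and to verify that the factor $\sum_b y_b(x^\Lambda_b)^\delta$ produced by the weights $(x^\Lambda_b)^\delta$ is absorbed entirely into the $\lambda$-independent constant $K_\delta$---so that, after normalization, every trace of $\delta$ disappears.
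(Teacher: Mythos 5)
Your proposal is correct in substance, but it takes a genuinely different route from the paper. You compute $\mu_\delta([\lambda])$ exactly on \emph{every} cylinder set by isolating the Perron spectral projection $P_0 = x^\Lambda y^T/(y^Tx^\Lambda)$ in the residue at $s=1$; your observation that $(s-1)/(1-\omega_i\rho^{1-s})\to 0$ for $\omega_i\neq 1$ while the $\omega_0=1$ term yields $1/\ln\rho$ is exactly right, and it is the correct way to handle mere irreducibility, where $\lim_n A^n/\rho^n$ need not exist. The paper instead avoids identifying any spectral projection: from the scaling identity $w_\delta(\gamma_0\gamma') = \rho^{-1/\delta}w_\delta(\gamma')$ for $|\gamma_0|=k$ it derives the self-similarity relation $\mu_\delta([v]) = \rho^{-1}\sum_z A(v,z)\,\mu_\delta([z])$, so that $\big(\nu_\delta([v])\big)_{v}$ is a normalized positive $\rho$-eigenvector of the irreducible matrix $A$ and hence equals $x^\Lambda$ by Perron--Frobenius uniqueness; it then proves the scaling relation $\mu_\delta([\gamma]) = \rho^{-q_0}\mu_\delta([s(\gamma)])$ only for \emph{square} paths ($|\gamma| = q_0 k$), and finishes by citing the fact (from Lemma 4.1 of \cite{FGKP}) that square cylinder sets generate the Borel $\sigma$-algebra. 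Your route buys more: an explicit closed form for the constant $K_\delta = \mu_\delta(X_{\mathcal B_\Lambda}) = \mathcal{T}(|D|^{-\delta})$ and equality on all cylinders without the generation lemma. The paper's route buys less bookkeeping: no color-graded exponents, no left eigenvector, and no need to compute any constant.

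One fact you should make explicit to close your $t\neq 0$ bookkeeping: you need $y^T A_1\cdots A_{t'} = \rho_1\cdots\rho_{t'}\,y^T$ (together with $A_{t_0+1}\cdots A_k\,x^\Lambda = \rho_{t_0+1}\cdots\rho_k\,x^\Lambda$, which is Proposition 3.1 of \cite{aHLRS}) so that the wrap-around path-counting products $A_{t_0+1}\cdots A_k\, A^n\, A_1\cdots A_{t'}$ sandwiched around $P_0$ collapse to the exponents $-(q+1)$ and $-q$ of \eqref{eq:M-measure} with a $\lambda$-independent prefactor. This is not automatic from $y$ being the left Perron eigenvector of $A$ alone, but it follows easily: each $A_i$ commutes with $A$ and $\rho$ is a simple eigenvalue of the irreducible matrix $A$, so $(y^TA_i)A = \rho\,(y^TA_i)$ forces $y^TA_i = c_i y^T$ for some scalar $c_i>0$, and pairing with $x^\Lambda$ gives $c_i=\rho_i$. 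With this lemma in hand, your computation does yield $\mu_\delta([\lambda]) = K_\delta\, M([\lambda])$ with the same constant $K_\delta = \dfrac{2k}{(y^Tx^\Lambda)\ln\rho}\sum_b y_b (x^\Lambda_b)^\delta$ for every $t$, exactly as you anticipated.
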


\begin{proof}
For any path $\gamma \in F_v\mathcal{B}_\Lambda$ with $|\gamma| \geq k$,  write $\gamma = \gamma_0 \gamma'$ with $|\gamma_0| = k$.  Since $r(\gamma') \in \mathcal{V}_k = \mathcal{V}_0$, we can identify $\gamma'$ with a path in $F\mathcal{B}_\Lambda$. Then Proposition \ref{pr:delta-weight} tells us that 
\[w_\delta(\gamma) = \rho^{-1/\delta} w_\delta(\gamma').\]

Consequently, 
\begin{align*}
\mu_{\delta}([v]) &  = \lim_{s \searrow 1}2(s-1){\sum_{\gamma \in F_v \mathcal{B}_\Lambda}w_\delta(\gamma)^{\delta s}} = \lim_{s \searrow 1} 2(s-1) \left(  {\sum_{r(\gamma) = v, |\gamma| < k} w_\delta(\gamma)^{\delta s}} + \sum_{r(\gamma) = v, |\gamma| \geq k } w_\delta(\gamma)^{\delta s}\right)  \\
&= \lim_{s\searrow 1}2(s-1) \left({\sum_{r(\gamma) = v, |\gamma| < k} w_\delta(\gamma)^{\delta s}} + {\sum_{n=1}^\infty \sum_{t=0}^{k-1} \sum_{r(\gamma) = v, \, |\gamma| = nk + t} w_\delta(\gamma)^{\delta s}} \right) \\
&= \lim_{s\searrow 1}2(s-1) \left( \sum_{r(\gamma) = v, |\gamma| < k} w_\delta(\gamma)^{\delta s} + {\rho^{-s} \sum_{n=0}^\infty \sum_{t=0}^{k-1} \sum_{z \in \Lambda^0}\sum_{r(\gamma') = z, \, |\gamma'| = nk + t}A(v, z) w_\delta(\gamma')^{\delta s}} \right) \\
&=\lim_{s\searrow 1} \frac{1}{\rho^s} 2(s-1)   \sum_{z \in \Lambda^0} A(v, z) \sum_{\gamma' \in F_z \mathcal B_\Lambda} w_\delta(\gamma')^{\delta s} \\
&= \frac{1}{\rho} \sum_{z\in \Lambda^0} A(v,z)  \mu_{\delta}([z]).
\end{align*}
The third equality holds because of the formula \eqref{eq:w-delta}
for the weight $w_\delta$; to be precise, if $\gamma = \gamma_0 \gamma'$ and $|\gamma_0| = k$, then $w_\delta(\gamma)^{\delta s} = \rho^{-s} w_\delta(\gamma')^{\delta s}.$  Moreover, for each fixed such path $\gamma'$ with range $z$ and length $(n-1)k + t$, there are $A(v, z)$ paths $\gamma$ of length $nk+t$ and range $v$ such that $\gamma = \gamma_0 \gamma'$ for some path $\gamma_0$ with length $k$.
The penultimate equality holds because the first sum (being  finite) tends to zero as $s$ tends to $1$; the final equality holds since both $\lim_{s\searrow 1} \rho^{-s} $ and $ \mu_{\delta}([z])$ are finite, so the limit of the product equals the product of the limits. 
Thus, $(\nu_{\delta}([v]))_{v\in \mathcal V_0}$ is a positive eigenvector for $A$ with $\ell^1$-norm 1 and eigenvalue $\rho$, and hence must agree with $x^\Lambda$ by the irreducibility of $A$.

Moreover, if $|\gamma| = q_0 k$ (equivalently, if we think of $\gamma \in \Lambda$, then $d(\gamma) = (q_0, \ldots, q_0)$), then 
\[ \mu_{\tau, \delta}([\gamma]) = \lim_{s\searrow 1} 2(s-1) \frac{1}{\rho^{s q_0}} \sum_{b, v \in \mathcal V_0} \sum_{t=0}^{k-1}  \sum_{n\in \N} \frac{ A^n(s(\gamma), v) }{\rho^{n s}}\frac{A_1 \cdots A_t(v, b) (x^\Lambda_b)^s}{(\rho_1 \cdots \rho_t)^{s}} = \frac{1}{\rho^{q_0}} \mu_{\tau, \delta}([s(\gamma)]).\]
Comparing this formula with Equation \eqref{eq:M-measure} tells us that whenever $|\gamma | = q_0 k$,
\[\nu_{\delta}([\gamma]) = M([\gamma]).\]
Since $\nu_{\delta}$ agrees with $M$ on the square cylinder sets $[\lambda]$ with $d(\lambda) = (q_0, \ldots, q_0)$, and we know from the proof of Lemma 4.1 of \cite{FGKP} that
these sets generate the Borel $\sigma$-algebra of $X_{\mathcal{B}_\Lambda}$, the measures $\nu_{\delta}$ and  $M$ must agree on all Borel subsets of $X_{\mathcal{B}_\Lambda}.$ 
\end{proof}
}

\begin{rmk}
\begin{enumerate}
\item 
If one could prove that the vector $(\mu_{\delta}[v])_{v\in \mathcal V_0}$ was an eigenvector for each $A_i$ with eigenvalue $\rho_i$, then we could use the theory of families of irreducible matrices, developed in \cite[Section 3]{aHLRS}, to remove the hypothesis that $A$ be irreducible in Theorem \ref{thm:dixmier-aHLRS}.  
\item 
Since $\mathcal{T}(|D|^{-\delta})$ does not depend on $\tau,$ the above proposition shows that $\mu_{\delta}$ is a finite measure on $X_{\mathcal{B}_\Lambda}$, with
\[
\mu_{\delta} (O)= \mathcal{T}( |D|^{-\delta})\,  M(O),  \hbox{ for every Borel set $O$ of $X_{\mathcal{B}_\Lambda}$}.\]
\end{enumerate}
\end{rmk}}

{We have therefore  proved the following improved version of Theorem  \ref{thm:Dixmier-trace}, under the additional hypothesis that $A = A_1 \cdots A_k$ be irreducible. 

\begin{thm} \label{thm:Dixmier-trace-final}	Let $\Lambda$ be a finite, strongly connected $k$-graph.  Write  $A_i$ for the $i$th adjacency matrix of $\Lambda$ and  suppose that  $A = A_1 \cdots A_k$ is  irreducible. Fix $\delta \in (0,1)$ and suppose that Hypothesis \ref{hyp:diam-equal-eright} holds for the weight $w_\delta$ of Equation \eqref{eq:w-delta}. Then for any  {$f \in C_{}(X_{\mathcal B_\Lambda}) $}, the operator $ \pi_\tau(f) |D|^{-\delta}$ is measurable in the sense of Connes and its Dixmier trace is 
	\[
\lim_{s \searrow 1} (s-1) \text{Tr}(\pi_\tau(f) |D|^{-\delta s}) =
\mathcal{T}(|D|^{-\delta}) \int_{X_{\mathcal{B}_\Lambda}} f\, dM,
\]
where  $M$ is the measure introduced in Proposition~8.1 of \cite{aHLRS}. 
\end{thm}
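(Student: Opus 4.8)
The plan is to derive this statement as a direct corollary of the two preceding results, Theorems \ref{thm:Dixmier-trace} and \ref{thm:dixmier-aHLRS}, by inserting the measure identification of the latter into the integral formula of the former. First I would observe that the standing hypotheses here — namely Hypothesis \ref{hyp:diam-equal-eright} for $w_\delta$, together with the irreducibility of $A = A_1 \cdots A_k$ — supply exactly what both earlier theorems require: Hypothesis \ref{hyp:diam-equal-eright} is equivalent, via Lemma \ref{lem:weight-diam}, to the ``at least two edges of each color'' condition assumed in Theorem \ref{thm:dixmier-aHLRS}, while the irreducibility of $A$ is the extra hypothesis of that theorem. The Connes measurability of $\pi_\tau(f)|D|^{-\delta}$ for all $f \in C(X_{\mathcal B_\Lambda})$, and the identity
\[
\mathcal{T}(\pi_\tau(f)|D|^{-\delta}) = \lim_{s\searrow 1}(s-1)\text{Tr}(\pi_\tau(f)|D|^{-\delta s}) = \int_{X_{\mathcal B_\Lambda}} f\, d\mu_\delta,
\]
are then already furnished by Theorem \ref{thm:Dixmier-trace} (which, notably, does not require irreducibility of $A$). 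Thus the only remaining task is to rewrite the right-hand integral in terms of $M$.

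To this end I would first compute the total mass of $\mu_\delta$. Since $X_{\mathcal B_\Lambda} = \bigsqcup_{v \in \mathcal V_0}[v]$ with $\mathcal V_0$ finite, the characteristic function $\chi_{X_{\mathcal B_\Lambda}}$ is identically $1$, so $\pi_\tau(\chi_{X_{\mathcal B_\Lambda}}) = 1 \in B(\H)$; hence, by the definition of $\mu_\delta$ in Equation \eqref{eq:def-measure-dix-trace} together with Corollary \ref{cor:D-inv-Connes-mbl},
\[
\mu_\delta(X_{\mathcal B_\Lambda}) = \mathcal{T}(\pi_\tau(\chi_{X_{\mathcal B_\Lambda}})|D|^{-\delta}) = \mathcal{T}(|D|^{-\delta}),
\]
a finite and strictly positive quantity. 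Theorem \ref{thm:dixmier-aHLRS} then identifies the normalization $\nu_\delta = \mu_\delta / \mu_\delta(X_{\mathcal B_\Lambda})$ with the measure $M$ of \cite{aHLRS}; combining this with the mass computation gives the Borel measure identity $\mu_\delta = \mathcal{T}(|D|^{-\delta})\, M$.

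Substituting $\mu_\delta = \mathcal{T}(|D|^{-\delta})\, M$ into the formula from Theorem \ref{thm:Dixmier-trace} yields, for every $f \in C(X_{\mathcal B_\Lambda})$,
\[
\mathcal{T}(\pi_\tau(f)|D|^{-\delta}) = \int_{X_{\mathcal B_\Lambda}} f\, d\mu_\delta = \mathcal{T}(|D|^{-\delta}) \int_{X_{\mathcal B_\Lambda}} f\, dM,
\]
which is the asserted identity. Because each ingredient is already in hand, there is no substantial obstacle to overcome; the single point deserving explicit care is verifying that the normalizing scalar extracted from Theorem \ref{thm:dixmier-aHLRS} is precisely $\mathcal{T}(|D|^{-\delta})$ rather than some other constant, which is exactly why I would carry out the total-mass computation $\mu_\delta(X_{\mathcal B_\Lambda}) = \mathcal{T}(|D|^{-\delta})$ before performing the substitution.
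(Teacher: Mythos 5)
Your proposal is correct and follows essentially the same route as the paper: the paper also obtains Theorem \ref{thm:Dixmier-trace-final} as a direct consequence of Theorem \ref{thm:Dixmier-trace} combined with Theorem \ref{thm:dixmier-aHLRS}, noting (in the remark preceding the theorem) exactly the identity $\mu_{\delta} = \mathcal{T}(|D|^{-\delta})\, M$ that you derive from the total-mass computation $\mu_\delta(X_{\mathcal B_\Lambda}) = \mathcal{T}(|D|^{-\delta})$. Your explicit verification of the normalizing constant via Corollary \ref{cor:D-inv-Connes-mbl} matches the paper's Equation \eqref{eq:renormalization}, so there is no gap.
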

}

\section{Eigenvectors of Laplace-Beltrami operators and  wavelets 
	}\label{sec-wavelets-as-eigenfunctions}

	In this section, we investigate the relationship between the decomposition of $L^2(X_{\mathcal{B}_\Lambda}, \mu_\delta)$ via the eigenspaces of the  Laplace-Beltrami operators $\Delta_s$ associated to the {spectral triples of Section \ref{sec:zeta-regular} for the }ultrametric Cantor set $(X_{\mathcal{B}_\Lambda}, d_{w_\delta})$ of Corollary~\ref{cor:ultrametric-Cantor}, and the wavelet decomposition of $L^2(\Lambda^\infty, M)$ given in Theorem 4.2 of \cite{FGKP}. {Our main result in this section, Theorem \ref{thm:JS-wavelets}, establishes that the Laplace-Beltrami eigenspaces, as described in \cite[Theorem 4.3]{julien-savinien}, also encode the wavelet decomposition  of \cite[Theorem 4.2]{FGKP}.}
	
	{The connection between operators and wavelets that we identify in this section goes deeper than the frequently-seen connection between wavelet decompositions and Dirac operators.  To be precise, the wavelet decomposition of $L^2(\Lambda^\infty, M)$ arises from a representation of $C^*(\Lambda)$ (see Definition \ref{def:c-star-lambda}).  Thus, the results in this section establish a link between representations of higher-rank graphs and the Pearson-Bellissard spectral triples, in addition to identifying the wavelet decomposition of \cite{FGKP} with the eigenspaces of the Laplace-Beltrami operators $\Delta_s$.}
\subsection{The Laplace-Beltrami operators and their eigenspaces}

	We begin by describing the Laplace-Beltrami operators of \cite{pearson-bellissard} and their eigenspaces. 
	{Recall a choice function is a map $\tau:F\mathcal{B}_\Lambda\to  X_{\mathcal{B}_\Lambda}\times X_{\mathcal{B}_\Lambda}$  satisfying $\tau(\gamma)=(x,y)$ where $x,y\in [\gamma]$ and $d(x,y)=\text{diam}([\gamma])=w(\gamma).$ The set of all choice functions is denoted by $\Upsilon_\Lambda$. We want to identify  $\Upsilon_\Lambda$ with a measurable space which we can construct a measure related to the measure $M$ which arose in the last section, see Theorem \ref{thm:dixmier-aHLRS}.  Our approach will be the same as that given in Section 7.2 of \cite{pearson-bellissard} with slightly more detail.
	
	\begin{prop} (cf.~\cite{pearson-bellissard}, Section 7.2)
		\label{prop-param-choice-fcns}
		Let $\Lambda$ be a strongly connected finite $k$-graph and $\delta \in (0,1)$ such that $(\mathcal B_\Lambda, w_\delta)$ satisfies Hypothesis \ref{hyp:diam-equal-eright}. If $\Upsilon_\Lambda$ represents the set of choice functions $\tau:F\mathcal{B}_\Lambda\to  X_{\mathcal{B}_\Lambda}\times X_{\mathcal{B}_\Lambda}$, we can identify  $\Upsilon_\Lambda$ with an infinite product space
		$$Y=\prod_{\gamma\in F\mathcal{B}_\Lambda}Y_{\gamma},$$ 
		where each $Y_{\gamma}$ is a compact set equal to a finite unions of products of cylinder sets.  Moreover, {assuming that the product $A = A_1 \cdots A_k$ of the adjacency matrices of $\Lambda$ is irreducible,} there is a probability measure $N$ on $Y$ that can be derived from the measure $ M $ on $X_{\mathcal{B}_{\Lambda}}$  described in Theorem \ref{thm:dixmier-aHLRS}.
	\end{prop}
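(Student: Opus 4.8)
The plan is to describe the choice functions completely combinatorially, and then to assemble $N$ as an infinite product of conditional measures built from $M$. First I would unwind the defining condition on a choice function $\tau$ at a fixed path $\gamma\in F\mathcal{B}_\Lambda$. A pair $(x,y)\in[\gamma]\times[\gamma]$ satisfies $d_{w_\delta}(x,y)=\text{diam}[\gamma]=w_\delta(\gamma)$ (using Hypothesis \ref{hyp:diam-equal-eright}) precisely when $w_\delta(x\wedge y)=w_\delta(\gamma)$; since $x,y\in[\gamma]$ forces $\gamma$ to be a sub-path of $x\wedge y$ and $w_\delta$ is strictly decreasing along paths (third condition of Definition \ref{def:weight}), this holds if and only if $x\wedge y=\gamma$. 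Writing $e_1,\dots,e_m$ for the edges $e\in\mathcal{E}_{|\gamma|+1}$ with $r(e)=s(\gamma)$, the condition $x\wedge y=\gamma$ says exactly that $x\in[\gamma e_i]$ and $y\in[\gamma e_j]$ for some $i\neq j$. Hence the set of admissible values of $\tau(\gamma)$ is
\[
Y_\gamma:=\{(x,y)\in[\gamma]\times[\gamma]:x\wedge y=\gamma\}=\bigsqcup_{i\neq j}[\gamma e_i]\times[\gamma e_j].
\]
Each $[\gamma e_i]$ is clopen, hence compact, and by Lemma \ref{lem:weight-diam} there are at least two such edges (so $Y_\gamma\neq\emptyset$); thus $Y_\gamma$ is a finite disjoint union of products of cylinder sets and is compact. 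Since a choice function is nothing but a selection of one point of $Y_\gamma$ for each $\gamma$, the map $\tau\mapsto(\tau(\gamma))_\gamma$ identifies $\Upsilon_\Lambda$ with $Y=\prod_{\gamma\in F\mathcal{B}_\Lambda}Y_\gamma$, which settles the first assertion.

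Next I would place a probability measure on each factor. Because $M([\gamma e_i])=\rho(\Lambda)^{-d(\gamma e_i)}x^\Lambda_{s(\gamma e_i)}>0$ for every extending edge, the product $M\times M$ gives $Y_\gamma$ finite positive mass $(M\times M)(Y_\gamma)=\sum_{i\neq j}M([\gamma e_i])M([\gamma e_j])$, so I may define the normalized restriction
\[
N_\gamma:=\frac{1}{(M\times M)(Y_\gamma)}\,(M\times M)\big|_{Y_\gamma},
\]
a Borel probability measure on the compact metrizable space $Y_\gamma$. I would then take $N:=\bigotimes_{\gamma\in F\mathcal{B}_\Lambda}N_\gamma$. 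Since $\Lambda$ is finite, each $F^n\mathcal{B}_\Lambda$ is finite, so $F\mathcal{B}_\Lambda$ is countable; the countable product of the probability spaces $(Y_\gamma,N_\gamma)$ therefore exists and yields a Borel probability measure $N$ on the compact metrizable space $Y$, by the standard theory of infinite products of probability spaces (equivalently, by Kolmogorov's extension theorem applied to the cylinder algebra of $Y$). By construction $N$ is derived from $M$, which gives the second assertion.

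The construction is essentially routine once the description of $Y_\gamma$ is secured; the one point that genuinely needs the hypotheses is the identification $Y_\gamma=\bigsqcup_{i\neq j}[\gamma e_i]\times[\gamma e_j]$ together with its nonemptiness, which rests on Hypothesis \ref{hyp:diam-equal-eright} via Lemma \ref{lem:weight-diam} to furnish at least two edges extending every $\gamma$ (so that each $N_\gamma$ is well defined). The irreducibility of $A=A_1\cdots A_k$ enters only through Theorem \ref{thm:dixmier-aHLRS}, which guarantees that $M$ is the normalized Dixmier-trace measure $\nu_\delta$; it plays no role in the product-measure construction itself, but ensures that $N$ is genuinely derived from the spectral-triple data on $X_{\mathcal{B}_\Lambda}$.
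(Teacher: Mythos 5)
Your proof is correct and follows essentially the same route as the paper: you identify $Y_\gamma$ as the finite disjoint union $\bigsqcup_{i\neq j}[\gamma e_i]\times[\gamma e_j]$ over pairs of distinct one-edge extensions (the paper's set $\mathcal G_\gamma$), normalize the restriction of $M\times M$ to $Y_\gamma$ (your $N_\gamma$ coincides exactly with the paper's, since $(M\times M)(Y_\gamma)=\sum_{(\eta,\eta')\in\mathcal G_\gamma}M([\eta])M([\eta'])$), and then take the infinite product of probability measures, where the paper cites Kakutani and you invoke the equivalent Kolmogorov-type product construction. Your observation that irreducibility of $A$ enters only through Theorem \ref{thm:dixmier-aHLRS} also matches the remark the paper makes immediately after the proposition.
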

	
	\begin{proof}
		We first fix $\gamma\in F\mathcal{B}_\Lambda,$ and define the subset ${\mathcal G}_{\gamma}$ of $F\mathcal B_\Lambda \times F \mathcal B_\Lambda$ as in Section 7.2 of \cite{pearson-bellissard}.  Let $z\in X_{\mathcal{B}_\Lambda}$ be an element of $[\gamma],$ so that $z(0,d(\gamma))=\gamma.$ If we set $r=w_\delta(\gamma),$ we know from Proposition \ref{pro:weight-to-metric} and Hypothesis (\ref{hyp:diam-equal-eright}) that $[\gamma]=B[z,r].$
		Now let $\tau$ be a choice function with $\tau(\gamma)=(x,y),$ so that $d_\delta(x,y)=w_\delta (\gamma)=\text{diam}([\gamma]).$ Hypothesis \ref{hyp:diam-equal-eright} implies the existence of  $\gamma_1$ and $\gamma_2$ in $F\mathcal{B}_\Lambda$ that are extensions of the fixed finite path $\gamma$ with $|\gamma_1|=|\gamma_2|=|\gamma|+1,$ and $x(0,d(\gamma_1))=\gamma_1,$ and $y(0,d(\gamma_2))=\gamma_2.$ On the other hand, given $\gamma_1,\;\gamma_2\in F\mathcal{B}_\Lambda$ that are extensions of the fixed finite path $\gamma$ with $\gamma_1\not=\gamma_2,\;\;|\gamma_1|=|\gamma_2|=|\gamma|+1,$ for any $x\in [\gamma_1]\subset B[z,r]$ we have  $x(0,d(\gamma_1))=\gamma_1$ and for any $y\in [\gamma_2]\subset B[z,r]$ we have $y(0,d(\gamma_2))=\gamma_2$ so that by Proposition  \ref{pro:weight-to-metric}, $d_\delta (x,y)=w(\gamma)=\text{diam}([\gamma])=r.$   Thus we can identify all ordered pairs that are contained in the Cartesian products  $[\gamma_1]\times[\gamma_2]$ with the image under a choice function of $\gamma\in F\mathcal{B}_\Lambda.$   For each $\gamma\in F\mathcal{B}_\Lambda,$ we therefore write
		$${\mathcal G}_{\gamma}=\{(\gamma_1,\gamma_2)\in F\mathcal{B}_\Lambda\times F\mathcal{B}_\Lambda\}$$
		where $\gamma_1$ and $\gamma_2$ are extensions of $\gamma$  
		with  $|\gamma_1|=|\gamma_2|=|\gamma|+1.$ Our requirement that $\Lambda$ be a finite $k$-graph implies that each ${\mathcal G}_{\gamma}$ is a finite set. For each $\gamma\in  F\mathcal{B}_\Lambda$,  we write 
		$$Y_{\gamma}=\bigsqcup_{(\gamma_1,\gamma_2)\in {\mathcal G}_{\gamma}} [\gamma_1]\times [\gamma_2].$$ 
		Since ${\mathcal G}_{\gamma}$ is a finite set and each $[\gamma_1]\times [\gamma_2]$ is compact in $X_{\mathcal{B}_\Lambda}\times X_{\mathcal{B}_\Lambda},$
		the finite disjoint union $Y_{\gamma}$ is closed in $X_{\mathcal{B}_\Lambda}\times X_{\mathcal{B}_\Lambda},$ hence compact. 
		We then note that by construction, each element of the infinite product 
		$$Y=\prod_{\gamma\in F\mathcal{B}_\Lambda}Y_{\gamma}$$  can be identified with a choice function, and thus $Y$ can be identified with $\Upsilon_\Lambda.$  It follows that if we equip each factor  $Y_{\gamma}$ with a probability measure $N_{\gamma},$ we obtain a probability measure $N$  on the infinite product space $X,$ by the fundamental results of Kakutani \cite{kaku}.
		
		We recall that  $ M$ is the probability measure on $X_{\mathcal{B}_\Lambda}$ which arises via the normalized Dixmier trace,   as described in Theorem \ref{thm:dixmier-aHLRS}, and so $M \times M$ is a probability measure on the Cartesian product $X_{\mathcal{B}_\Lambda}\times X_{\mathcal{B}_\Lambda}.$
		Fixing $(\gamma_1,\gamma_2)\in {\mathcal G}_{\gamma},$
		then $ M \times M$ restricts to a finite measure on Borel subsets of the Cartesian product $[\gamma_1]\times [\gamma_2]\subset X_{\mathcal{B}_\Lambda}\times X_{\mathcal{B}_\Lambda}$ that is most likely not a probability measure.
		We now scale this measure as follows: for any Borel subset $E$ of $[\gamma_1]\times [\gamma_2],$ let 
		$$N_{(\gamma_1,\gamma_2)}(E)=\frac{(M \times M)(E)}{\sum_{(\eta,\eta')\in{\mathcal G}_{\gamma}}M([\eta])M([\eta'])}.$$
		Now define the Borel measure $N_\gamma$ on $Y_{\gamma}$ by setting 
		$$N_{\gamma}(E)=\sum_{(\gamma_1,\gamma_2)\in {\mathcal G}_{\gamma}}N_{(\gamma_1,\gamma_2)}(E\cap ([\gamma_1]\times [\gamma_2])).$$
		Finally, using Kakutani's infinite product theory for measures \cite{kaku}, we have a Borel probability measure $N$ defined on $Y=\prod_{\gamma}Y_{\gamma}$ by 
		$$N=\prod_{\gamma\in F\mathcal{B}_\Lambda}N_{\gamma}.$$
		Since $Y$ can be identified with $\Upsilon_\Lambda,$ we write $N$ for the corresponding measure on $\Upsilon_\Lambda,$ as well.
	\end{proof}
	
	{\begin{rmk} In Proposition \ref{prop-param-choice-fcns}
		the hypothesis $A = A_1 \cdots A_k$ is irreducible is not essential. In its absence,  we can prove that we obtain a  probability measure $N_{ \delta}$ on $Y$ that can be derived from the measure $ \mu_{ \delta}$ on $X_{\mathcal{B}_{\Lambda}}$ (of Proposition \ref{pr:dixmier-trace-measure}) in the same way that $N$ is derived from $M$.
		\end{rmk}}
		
	Therefore, according to Section~8.3 of \cite{pearson-bellissard} and Section~4 of \cite{julien-savinien}, for each $s\in \R$ the {\color{teal} $\zeta-$regular} Pearson-Bellissard spectral triple from the previous section  gives rise to a Laplace-Beltrami operator $\Delta_s$ on $L^2(X_{\mathcal{B}_\Lambda}, M)$  via the Dirichlet form $Q_s$ as follows:
	\begin{equation}\label{eq:Dirichlet-form}
	\langle f, \Delta_s(g)\rangle =Q_s(f,g):=\frac{1}{2}\int_{\Upsilon_\lambda}\text{Tr}\Big( \vert D\vert^{-s}[D,\pi_\tau(f)]^\ast [D,\pi_\tau(g)]\Big)\, dN(\tau).
	\end{equation}
	Thanks to Section 8.1 of \cite{pearson-bellissard}, we know that $Q_s$ is a closable Dirichlet form for all $s\in \R$ and it has a dense domain that is generated by the set of characteristic functions on cylinder sets of $X_{\mathcal{B}_\Lambda}$. Also, by applying the work of \cite{pearson-bellissard} and \cite{julien-savinien} to our weighted stationary $k$-Bratteli diagrams $\mathcal{B}_\Lambda$, we can obtain an explicit formula for $\Delta_s$ on characteristic functions as follows.

	For a finite path $\eta = (\eta_i)_{i=1}^{|\eta|}$ {(where each  $\eta_i$ is an edge)} in $\mathcal{B}_\Lambda$, we write  $\chi_{[\eta]}$ for the characteristic function of the set $[\eta] \subseteq  X_{\mathcal{B}_\Lambda}$ of infinite paths of $\mathcal{B}_\Lambda$ whose initial segment is $\eta$, and $\eta(0, i)$ for $\eta_1 \cdots \eta_i$. We denote by $\eta(0, 0)$ the vertex $r(\eta)$. Also, for $\gamma \in F\mathcal{B}_\Lambda$, we set
	\[
	\frac{1}{F_\gamma} = \sum_{(e, e') \in \text{ext}_1 (\gamma)} M([\gamma e]) M([\gamma e']), 
	\]
	where $\text{ext}_1(\gamma)$ is the set of pairs $(e, e')$ of edges in $\mathcal{B}_\Lambda$ with $e \not= e'$ and $r(e) = r(e') = s(\gamma)$.
	
	{From Lemma \ref{lem:weight-diam}, we know that if Hypothesis \ref{hyp:diam-equal-eright} holds for the weighted stationary $k$-Bratteli diagram $(\mathcal B_\Lambda, w_\delta)$ associated to a higher-rank graph $\Lambda$, then $\text{ext}_1(\gamma)$ is nonempty for all $\gamma \in F\mathcal B_\Lambda$.  We can therefore assume that $\text{ext}_1(\gamma) $ is always nonempty; equivalently, that $F_\gamma < \infty$.}
	Then{, as in Section 4 of \cite{julien-savinien},} for each $s\in \R$, we have
	\begin{equation}
	\begin{split} \Delta_s(\chi_{[\eta]}) &  = -\sum_{i=0}^{|\eta|-1} 2 F_{\eta(0,i)} w(\eta(0,i))^{s-2}\left(M([\eta(0,i)] \backslash [\eta(0, i+1)] ) \chi_{[\eta]} \right. \\
	& \qquad \quad \left. - M([\eta]) \chi_{[\eta(0,i)]\backslash [\eta(0, i+1)]} \right).\end{split}
	\label{eq:Laplace-Beltrami}
	\end{equation}
	
	We now restate some results from Section 4 of \cite{julien-savinien}, which we have adapted to our setting.
	
	\begin{prop}
		\label{prob-eigenspace-LB-ops} (cf. \cite{julien-savinien}, Theorem 4.3)
		Let $\Lambda$ be a finite, strongly connected $k$-graph and choose $\delta \in (0,1)$ such that $(\mathcal{B}_\Lambda, w_\delta)$ satisfies Hypothesis \ref{hyp:diam-equal-eright}. {Suppose that $A = A_1 \cdots A_k$ is irreducible.} Let $X_{\mathcal{B}_{\Lambda}}$ be the infinite path space associated to $\Lambda$ with associated probability  measure $M.$
		Let $\{\Delta_s:\;s>0\}$ be the family of Laplace--Beltrami operators defined on a dense subspace of $L^2(X_{\mathcal{B}_{\Lambda}},M)$  in Equation (\ref{eq:Laplace-Beltrami}).  Then the eigenspaces of $\{\Delta_s:\;s>0\}$ are independent of $s$.  Precisely, they are given by 
		$$E_{-1} = \text{span} \{\chi_{X_{\mathcal{B}_\Lambda}} \}$$
		with eigenvalue $0$ and  
		\[E_0 = \text{span}\left\{ \frac{1}{M([v])} \chi_{[v]} -\frac{1}{M([v'])} \chi_{[v']}: v \not= v' \in \mathcal{V}_0 \right\},\]
		with eigenvalue $2/ \left( \sum_{v\not= v' \in \mathcal V_0} M([v]) M([v']) \right) .$
		For each nonempty $\gamma\in F\mathcal{B}_\Lambda,$ define a subspace 
		\begin{equation}\label{eq:eigenspace_E_gamma}
		E_\gamma = \text{span} \left\{ \frac{1}{M([\gamma e])} \chi_{[\gamma e]} - \frac{1}{M([\gamma e'])} \chi_{[\gamma e']}: (e, e') \in \text{ext}_1(\gamma) \right\}.\end{equation}
		Then the subspace $E_{\gamma}$  consists of eigenvectors with the same eigenvalue, and for $\gamma\not=\eta\in F\mathcal{B}_{\Lambda},\;E_{\gamma}$ is orthogonal to $E_{\eta}.$
		
	\end{prop}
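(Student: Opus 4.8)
The plan is to feed the explicit spanning vectors of each eigenspace into the formula \eqref{eq:Laplace-Beltrami} for $\Delta_s$ and read off the eigenvalue, exploiting a systematic cancellation. Fix a nonempty $\gamma \in F\mathcal{B}_\Lambda$ and a pair $(e,e') \in \text{ext}_1(\gamma)$, and set $v_{e,e'} := \frac{1}{M([\gamma e])}\chi_{[\gamma e]} - \frac{1}{M([\gamma e'])}\chi_{[\gamma e']}$. Applying \eqref{eq:Laplace-Beltrami} to each of $\chi_{[\gamma e]}$ and $\chi_{[\gamma e']}$ (both of length $|\gamma|+1$), I would split the sum over $i$ into the prefix range $0 \le i \le |\gamma|-1$ and the top term $i = |\gamma|$. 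For each prefix index $i$ the paths $(\gamma e)(0,i) = (\gamma e')(0,i) = \gamma(0,i)$ coincide, so the ``spread-out'' function $\chi_{[\gamma(0,i)]\setminus[\gamma(0,i+1)]}$ and the scalars $F_{\gamma(0,i)}$, $w(\gamma(0,i))^{s-2}$, $M([\gamma(0,i)]\setminus[\gamma(0,i+1)])$ are identical for $e$ and $e'$; hence upon forming the normalized difference $v_{e,e'}$ these terms cancel, leaving a scalar multiple of $v_{e,e'}$. The top term $i=|\gamma|$ requires rewriting $M([\gamma]\setminus[\gamma e]) = M([\gamma]) - M([\gamma e])$ and $\chi_{[\gamma]\setminus[\gamma e]} = \chi_{[\gamma]} - \chi_{[\gamma e]}$; after simplification the $\chi_{[\gamma e]}$ pieces recombine and the common $\chi_{[\gamma]}$ cancels in the difference, again producing a scalar multiple of $v_{e,e'}$.

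Carrying out this bookkeeping shows $\Delta_s v_{e,e'} = \lambda_\gamma(s)\, v_{e,e'}$, where $\lambda_\gamma(s)$ is an explicit sum of the coefficients $2F_{\gamma(0,i)}w(\gamma(0,i))^{s-2}M([\gamma(0,i)]\setminus[\gamma(0,i+1)])$ together with the top coefficient $2F_\gamma w(\gamma)^{s-2}M([\gamma])$. Crucially, $\lambda_\gamma(s)$ depends only on $\gamma$ and $s$, not on the chosen pair $(e,e')$; since the displayed vectors span $E_\gamma$, every element of $E_\gamma$ is then an eigenvector with the common eigenvalue $\lambda_\gamma(s)$. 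Because the subspace $E_\gamma$ in \eqref{eq:eigenspace_E_gamma} is defined with no reference to $s$, this simultaneously proves that the eigenspaces are independent of $s$.

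For the two exceptional families I would argue separately. For $E_{-1}$, the function $\chi_{X_{\mathcal{B}_\Lambda}} = 1$ is the unit of $C(X_{\mathcal{B}_\Lambda})$, so $[D,\pi_\tau(1)] = 0$ and \eqref{eq:Dirichlet-form} gives $Q_s(f,1)=0$ for all $f$, whence $\Delta_s(\chi_{X_{\mathcal{B}_\Lambda}})=0$. The family $E_0$ is the genuinely new point relative to \cite{julien-savinien}, since $\mathcal{V}_0$ may contain more than one vertex and the vertex cylinders $[v]$ share no common sub-path. These differences sit at the ``root level'', where $\text{diam}(X_{\mathcal{B}_\Lambda}) = 1$ (any two paths with distinct initial vertices satisfy $x\wedge y = \emptyset$, so $d_{w_\delta}(x,y)=1$). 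Treating the empty path as a base with $w(\emptyset)=1$ and $M([\emptyset]) = M(X_{\mathcal{B}_\Lambda}) = 1$, the same cancellation as above (now with an empty prefix sum) yields that each $\frac{1}{M([v])}\chi_{[v]} - \frac{1}{M([v'])}\chi_{[v']}$ is an eigenvector with eigenvalue $2F_\emptyset = 2/\sum_{v\ne v'}M([v])M([v'])$, where $1/F_\emptyset = \sum_{(v,v')} M([v])M([v'])$ runs over distinct ordered pairs. The main obstacle is exactly this step: because $\Delta_s$ in \eqref{eq:Laplace-Beltrami} is indexed by $F\mathcal{B}_\Lambda$, whose length-$0$ elements are the individual vertices, the root-level term is not visible in \eqref{eq:Laplace-Beltrami} and must be extracted directly from the Dirichlet form \eqref{eq:Dirichlet-form} by recording the contribution of choice-function pairs lying in distinct vertex cylinders (equivalently, by adjoining a virtual root above $\mathcal{V}_0$).

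Finally, orthogonality of distinct eigenspaces follows from two elementary observations, which I would organize by cases. First, every spanning vector of every $E_\gamma$ (and of $E_0$) integrates to $0$ against $M$, so each such space is orthogonal to $E_{-1} = \text{span}\{\chi_{X_{\mathcal{B}_\Lambda}}\}$. Second, given $\gamma \ne \eta$: if neither is a sub-path of the other then $[\gamma]\cap[\eta]=\emptyset$ and the supports of $E_\gamma$ and $E_\eta$ are disjoint, forcing orthogonality; if, say, $\gamma$ is a proper sub-path of $\eta$, then $[\eta]\subseteq[\gamma e_0]$ for the first edge $e_0$ of $\eta$ after $\gamma$, every $g\in E_\gamma$ is constant (value $c$) on $[\gamma e_0]\supseteq[\eta]$, and every $h\in E_\eta$ has $\int_{[\eta]} h\,dM = 0$, so $\langle g,h\rangle = \bar{c}\int_{[\eta]}h\,dM = 0$. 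The same constant-versus-mean-zero argument handles $E_0 \perp E_\gamma$, since the $E_0$ vectors are constant on each $[r(\gamma)]\supseteq[\gamma]$. This completes the verification of all the assertions.
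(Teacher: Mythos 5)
Your proposal is correct, but it is organized quite differently from the paper's own proof, which is essentially citation-based: the paper defers both the $E_\gamma$ eigenvalue computation and the $E_0$ eigenvalue to Theorem 4.3 of \cite{julien-savinien}, records the resulting eigenvalue formula $\lambda_\gamma$ in terms of the quantities $G_s(\gamma(0,i)) = \tfrac12 \mathrm{diam}([\gamma(0,i)])^{2-s}F_{\gamma(0,i)}$, and only proves orthogonality from scratch. You instead derive the eigenvalue equation directly from Equation \eqref{eq:Laplace-Beltrami}, and your prefix-cancellation/top-term bookkeeping is accurate: the ``spread-out'' terms $\chi_{[\gamma(0,i)]\setminus[\gamma(0,i+1)]}$ cancel in the normalized difference because their coefficients $M([\gamma e])/M([\gamma e])$ and $M([\gamma e'])/M([\gamma e'])$ are both $1$, and at the top level the common $\chi_{[\gamma]}$ drops out, leaving a multiple of $v_{e,e'}$ with coefficient independent of $(e,e')$. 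What your route buys is self-containedness, and in particular it surfaces a genuine subtlety that the paper's citation glosses over: as you note, the length-zero elements of $F\mathcal{B}_\Lambda$ are the vertices, so Equation \eqref{eq:Laplace-Beltrami} applied to $\chi_{[v]}$ gives an empty sum, and the claimed $E_0$ eigenvalue $2/\sum_{v\neq v'}M([v])M([v'])$ is only recovered by adjoining a root-level (empty-path) block — with $w(\emptyset)=1$ and choice pairs lying in distinct vertex cylinders — exactly as in the rooted-diagram conventions of \cite{pearson-bellissard, julien-savinien}; your identification of $2F_\emptyset$ as the eigenvalue is the right resolution. Your orthogonality argument (locally-constant versus mean-zero on nested cylinders, plus disjoint supports otherwise) is cleaner and more conceptual than the paper's, which expands the inner product of basis vectors of $E_\gamma$ and $E_{\gamma\lambda}$ into four terms and cancels them case by case; the two arguments are equivalent in content. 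The only caveat, which applies equally to the paper's proof, is that neither argument shows these spaces exhaust the spectrum (i.e.\ that $\bigoplus_\gamma E_\gamma$ is dense in $L^2(X_{\mathcal{B}_\Lambda},M)$); in the paper this completeness is inherited from the statement of Theorem 4.3 of \cite{julien-savinien}, so your proof matches the stated proposition to the same degree of rigor.
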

	\begin{proof}
		This result is contained in Theorem 4.3 of \cite{julien-savinien}, and here we are including details for completeness and clarity of notation. 
		
		By our discussion of the action of $\Delta_s$ on cylinder sets, $\chi_{\Lambda^{\infty}}\equiv 1$ is in the kernel of $\Delta_s$ so that $E_{-1}$ has eigenvalue $0.$  The proof of Theorem 4.3 of \cite{julien-savinien} shows that 
		$$\frac{2}{\sum_{v,v'\in \Lambda^0:\;v\not=v'}M([v])M([v'])}$$
		is an eigenvalue for the given space $E_0.$
		Now consider the subspaces $E_{\gamma}$ for a nonempty path $\gamma\in F\mathcal{B}_\Lambda.$
		The eigenvalues $\lambda_{\gamma}$ for the subspaces $E_{\gamma}$ as given in the statement of our theorem are computed via Theorem 4.3 of \cite{julien-savinien} as follows.  
		Recall for any finite path $\eta$ of ${\mathcal B}_{\Lambda}$ we have defined the set  $\text{ext}_1(\eta)$ and the positive number $F_{\eta}$ above. For each $s>0,$ let 
		$$G_s(\eta)=\frac{1}{2}\text{diam}([\eta])^{2-s}F_\eta.$$
		Thus for a nonempty finite path $\gamma,$ the formula for the eigenvalue   $\lambda_{\gamma}$ is given by
		$$\lambda_{\gamma}=\sum_{i=0}^{|\gamma|-1}\frac{[M([\gamma(0,i)])-M([\gamma(0,i+1)])]}{G_s(\gamma(0,i))}-\frac{M([\gamma])}{G_s(\gamma)},$$ and in Theorem 4.3 of \cite{julien-savinien} it is shown that every vector in $E_{\gamma}$ is an eigenvector for $\Delta_s$ with eigenvalue $\lambda_{\gamma}.$ For an arbitrary finite $k$-graph, it is not an easy task to compute the eigenvalues  $\lambda_\gamma$ for a specific weight $w_\delta$. The authors have done so in the case of a symmetric weight where Bratteli diagram comes from the directed graph $\Lambda_D$ with $D$ vertices and $D^2$ edges giving rise to the Cuntz algebra ${\mathcal O}_D$ in  \cite[Theorem 4.10]{FGJKP1}, and have done so for an arbitrary weight on $\Lambda_2$ in \cite[Proposition 6.8]{FGJKP1}.

		The eigenspaces of $\Delta_s$ are independent of $s$, although in general, the eigenvalues $\lambda_\gamma$ depend on the choice of $s \in \R$. For general $\gamma,\;\eta$ in $F\mathcal{B}_{\Lambda}$ with $\gamma\not=\eta,$ it is not obvious that $\lambda_{\gamma}\not=\lambda_{\eta}.$  However, it will be the case that $E_{\gamma}\perp E_{\eta},$
		by the following reasoning. If $[\gamma]\cap [\eta]=\emptyset,$ it is evident that the functions in $E_{\gamma}$ and $E_{\eta}$ have disjoint support, thus are orthogonal. In the case where $[\gamma]\cap [\eta]\not= \emptyset$, suppose without loss of generality that $|\eta| \leq |\gamma|$.  It then follows that we must have $[\eta] \subseteq [\gamma]$, and consequently $\eta=\gamma \lambda$ for some path $\lambda.$ Therefore,
		\begin{align*}
	\langle \frac{1}{M([\gamma e])} \chi_{[\gamma e]} &\,- \frac{1}{M([\gamma e'])} \chi_{[\gamma e']},   \frac{1}{M([\eta \tilde{e}]} \chi_{[\eta \tilde{e}]} - \frac{1}{M([\eta \tilde{e}'])} \chi_{[\eta \tilde{e}']}\rangle = \frac{1}{M([\gamma e]) M([\eta \tilde e])} \int_{X_{\mathcal{B}_{\Lambda}}} \chi_{[\gamma e]} \chi_{[\eta \tilde e]} \, dM  \\
	&+ \frac{1}{M([\gamma e']) M([\eta \tilde e'])}\int_{X_{\mathcal{B}_{\Lambda}}} \chi_{[\gamma e']} \chi_{[\eta \tilde e']} \, dM   \\
	&  - \left( \frac{1}{M([\gamma e]) M([\eta \tilde e'])} \int_{X_{\mathcal{B}_{\Lambda}}}\chi_{[\gamma e]} \chi_{[\eta \tilde e']} \, dM + \frac{1}{M([\gamma e']) M([\eta \tilde e])} \int_{X_{\mathcal{B}_{\Lambda}}} \chi_{[\gamma e']} \chi_{[\eta \tilde e]} \, dM \right) .	
	\end{align*}
	The first and third terms  are both zero unless the first edge of $\lambda$  is $e$, in which case their difference evaluates to 
	\[ \frac{1}{M([\gamma e])} - \frac{1}{M([\gamma e])} = 0.\]  Similarly, the second and fourth integrals are both  zero unless the first edge of $\lambda$ is $e'$,  and in this case the integrals take the same value. It follows that the basis  vectors for $E_\gamma$ will always be orthogonal to  the basis vectors for $E_\eta$, so $E_{\gamma} \perp E_{\eta}$ as claimed.
	\end{proof}
	
	}

\subsection{Wavelets and eigenspaces for $\Delta_s$}\label{sec:k-wavelet}

In this section, we prove our Theorem relating the wavelet decomposition \eqref{eq:wavelet-decomp} with the eigenspaces $E_\gamma$ of the Laplace-Beltrami operators $\Delta_s$ in the case when $A := A_1 \cdots A_k$ is irreducible.

	In Theorem \ref{thm:JS-wavelets} below, we compare the subspaces $E_\gamma$ with the wavelet decomposition of $L^2(\Lambda^\infty, M)$ which was constructed in \cite{FGKP} out of a representation of the $C^*$-algebra $C^*(\Lambda)$ on $L^2(\Lambda^\infty, M)$.

{Before recalling this wavelet decomposition, 
	we first review the construction of  the $C^*$-algebra $C^*(\Lambda)$ associated to a higher-rank graph.
	\begin{defn}
		\label{def:c-star-lambda}\cite{kp}
		Let $\Lambda$ be a finite $k$-graph with no sources. 
		$C^*(\Lambda)$ is the universal $C^*$-algebra generated by a collection of partial isometries $\{s_\lambda\}_{\lambda \in \Lambda}$ satisfying the Cuntz-Krieger conditions:
		\begin{itemize}
			\item[(CK1)] $\{ s_v: v \in \Lambda^0\}$ is a family of mutually orthogonal projections;
			\item[(CK2)] Whenever $s(\lambda) = r(\eta)$ we have $s_\lambda s_\eta = s_{\lambda \eta}$;
			\item[(CK3)] For any $\lambda \in \Lambda, \ s_\lambda^* s_\lambda = s_{s(\lambda)}$;
			\item[(CK4)] For all $v \in \Lambda^0$ and all $n \in \N^k$, $\sum_{\lambda \in v\Lambda^n} s_\lambda s_\lambda^* = s_v$.
		\end{itemize}
	\end{defn}
	%
	
	We now review the ``standard representation" of $C^*(\Lambda)$ on $L^2(\Lambda^{\infty}, M),$ which we denote by $\pi.$ It is this representation, first described in Theorem 3.5 of \cite{FGKP}, which gives  the wavelets that will be used in the sequel. 
	For $p\in \N^k$ and $\lambda\in \Lambda$, let $\sigma^p$ and $\sigma_\lambda$ be the shift map and prefixing map given in Remark~\ref{rmk:S2_shift}(b).
	If we let $S_\lambda:=\pi(s_\lambda)$, the image of the standard generator $s_\lambda$ of $C^*(\Lambda)$, then
	Theorem 3.5 of \cite{FGKP} tells us that $S_\lambda$ is given on characteristic functions of cylinder sets by
	\begin{equation}\label{eq:S_lambda}
	\begin{split}
	S_\lambda\chi_{[\eta]} (x) &=\chi_{[\lambda]}(x)\rho(\Lambda)^{d(\lambda)/2}\chi_{[\eta]}(\sigma^{d(\lambda)}(x))\\
	&=\begin{cases} \rho(\Lambda)^{d(\lambda)/2}\quad \text{if $x=\lambda\eta y$ for some $y \in \Lambda^\infty$}\\ 
	0 \quad\quad\quad \text{otherwise}\end{cases}\\
	&= \rho(\Lambda)^{d(\lambda)/2} \chi_{[\lambda \eta]}(x).
	\end{split}
	\end{equation}

	We can think of the operators $S_{\lambda}$ as combined ``scaling and translation'' operators, since they change both the size and the range of a cylinder set $[\eta]$, and are intimately tied to the geometry of the $k$-graph $\Lambda$.

	Theorem \ref{thm:JS-wavelets} below shows  that when Hypothesis \ref{hyp:diam-equal-eright} holds and the adjacency matrix $A = A_1 \cdots A_k$ of $\Lambda$ is irreducible, the eigenspaces of the Laplace--Beltrami operators refine the wavelet decomposition of \cite{FGKP} which arises from the standard representation $\pi$.  In order to state and prove this Theorem, we first review this wavelet decomposition.}

For each $n \in \N$, write 
\[\mathscr{V}_n = \text{span} \{ \chi_{[\lambda]}: d(\lambda) = (n, \ldots, n)\}, \quad \text{ and } \quad \mathcal{W}_n = \mathscr{V}_{n+1} \cap \mathscr{V}_{n}^\perp.\] 
We know from Lemma~4.1 of \cite{FGKP} that $ \{ \chi_{[\lambda]}: d(\lambda) = (n, \ldots, n) \text{ for some } n \in \N\}$
densely spans $L^2(\Lambda^\infty, M)$.  Consequently, 
\begin{equation}
L^2(\Lambda^\infty, M)= \mathscr{V}_0 \oplus \bigoplus_{n\in \N} \mathcal{W}_n.
\label{eq:wavelet-decomp}
\end{equation}
{
	Proposition \ref{prop:S_n} below establishes that the subspaces $\mathcal{W}_n := \mathscr{V}_{n+1} \cap \mathscr{V}_n^\perp$ are precisely the wavelet subspaces which were denoted $\mathcal{W}_{n, \Lambda}$ in  
	Theorem 4.2 of \cite{FGKP}.  
	Indeed, one can think of the subspaces $\{\mathscr V_n\}_{n\in \N}$ as a ``multiresolution analysis'' for $L^2(\Lambda^\infty, M)$.  With this perspective, researchers familiar with wavelet theory will find it natural that the wavelet spaces $\mathcal W_{n,\Lambda}$ of \cite{FGKP} arise in this fashion from a multiresolution analysis.
}

For the proof of our main result, Theorem \ref{thm:JS-wavelets}, as well as for the proof of Proposition \ref{prop:S_n}, it will be convenient to work with a specific basis for $\mathcal{W}_0$.  For each vertex $v$ in $\Lambda$, let 
\[ D_v =  v\Lambda^{(1,\ldots, 1)} .\] 
One can show (cf.~\cite[Lemma 2.1(a)]{aHLRS}) that $D_v$ is always nonempty when $\Lambda$ is finite and strongly connected.

Enumerate the elements of $D_v$ as $D_v = \{ \lambda_0, \ldots, \lambda_{\#(D_v) -1}\}.$
Observe that if $D_v = \{ \lambda\}$ is a 1-element set, then $[v] = [\lambda]$.  If $\#(D_v) > 1$, 
then for each  $1 \leq i \leq \#(D_v) -1$, we define 
\begin{equation}\label{eq:f_iv}
f^{i,v} = \frac{1}{M([\lambda_0])} \chi_{[\lambda_0]} - \frac{1}{M([\lambda_i])} \chi_{[\lambda_i]}.
\end{equation}
One easily checks that in $L^2(\Lambda^\infty, M)$, $\langle f^{i,v} , \chi_{[w]} \rangle = 0$ for all $i$ and all vertices $v, w$, and that \[ \{ f^{i, v}: v \in \Lambda^0, 1 \leq i\leq \#(D_v)-1\}\]
is an orthogonal basis for $\mathcal{W}_0 =\mathscr{V}_1\cap \mathscr{V}_0^\perp  \subseteq L^2(\Lambda^\infty, M)$.

{
	The following Proposition justifies the labeling of the orthogonal decomposition of $L^2(\Lambda^\infty, M)$ given in Equation \eqref{eq:wavelet-decomp} as a wavelet decomposition; it is generated by applying our ``scaling and translation'' operators $S_\lambda$ to a finite family $\{ f^{i, v}\}_{i, v}$ of ``mother functions.''
	\begin{prop}\label{prop:S_n}
		For any $n \in \N$,
		the set  
		\[S_n = \{ S_\lambda f^{i, s(\lambda)}: d(\lambda) = (n, \ldots, n), 1 \leq i \leq \#(D_{s(\lambda)}) - 1\}\] 
		is a basis for $\mathcal{W}_n=\mathscr{V}_{n+1}\cap \mathscr{V}_n^{\perp}$.
		\label{prop:wavelets-reformulated}
	\end{prop}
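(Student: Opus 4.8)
The plan is to reduce everything to the already-established $n=0$ case, in which $\{f^{i,v}\}$ is an orthogonal basis for $\mathcal{W}_0$ and $\langle f^{i,v},\chi_{[w]}\rangle=0$, by exploiting the fact that for $d(\lambda)=(n,\dots,n)$ the operator $S_\lambda$ is a scaled prefixing map carrying $[s(\lambda)]$ onto $[\lambda]$. Concretely, I would prove three things: (i) $S_n \subseteq \mathcal{W}_n$; (ii) $S_n$ is an orthogonal set (hence linearly independent); and (iii) $\#S_n = \dim\mathcal{W}_n$. Since $\Lambda$ is finite, every $\mathscr{V}_m$ is finite-dimensional, so (i)--(iii) together force $S_n$ to be a basis of $\mathcal{W}_n$.

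For (i), fix $\lambda$ with $d(\lambda)=(n,\dots,n)$ and write $D_{s(\lambda)}=\{\lambda_0,\dots,\lambda_{\#(D_{s(\lambda)})-1}\}$. Applying \eqref{eq:S_lambda} to \eqref{eq:f_iv} gives $S_\lambda f^{i,s(\lambda)} = \rho(\Lambda)^{d(\lambda)/2}\big(\frac{1}{M([\lambda_0])}\chi_{[\lambda\lambda_0]} - \frac{1}{M([\lambda_i])}\chi_{[\lambda\lambda_i]}\big)$, and since $d(\lambda\lambda_j)=(n+1,\dots,n+1)$ this lies in $\mathscr{V}_{n+1}$. To see it is orthogonal to $\mathscr{V}_n$, note its support is contained in $[\lambda]$, so it is automatically orthogonal to $\chi_{[\mu]}$ for every degree-$(n,\dots,n)$ path $\mu\ne\lambda$; the only remaining pairing is with $\chi_{[\lambda]}$, which is $\equiv 1$ on $[\lambda]$, so that pairing equals $\int_{[\lambda]}S_\lambda f^{i,s(\lambda)}\,dM = \rho(\Lambda)^{d(\lambda)/2}\big(\frac{M([\lambda\lambda_0])}{M([\lambda_0])}-\frac{M([\lambda\lambda_i])}{M([\lambda_i])}\big)$. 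The multiplicativity $M([\lambda\lambda_j])=\rho(\Lambda)^{-d(\lambda)}M([\lambda_j])$ coming from \eqref{eq:M-measure} makes both ratios equal to $\rho(\Lambda)^{-d(\lambda)}$, so the integral vanishes and $S_\lambda f^{i,s(\lambda)}\in \mathcal{W}_n$.

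For (ii), two elements $S_\lambda f^{i,s(\lambda)}$ and $S_{\lambda'} f^{i',s(\lambda')}$ with $\lambda\ne\lambda'$ are supported on the disjoint cylinders $[\lambda]$ and $[\lambda']$, hence orthogonal; when $\lambda=\lambda'$, the same multiplicativity computation (equivalently, the fact that $S_\lambda^*S_\lambda=\pi(s_{s(\lambda)})$ acts as multiplication by $\chi_{[s(\lambda)]}$, so $S_\lambda$ is isometric on functions supported in $[s(\lambda)]$) gives $\langle S_\lambda f^{i,s(\lambda)}, S_\lambda f^{i',s(\lambda)}\rangle = \langle f^{i,s(\lambda)},f^{i',s(\lambda)}\rangle$, which is $0$ for $i\ne i'$ because $\{f^{i,v}\}$ is orthogonal. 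For (iii) I would count paths: the number of degree-$(m,\dots,m)$ paths is $\sum_{v,w}A^m(v,w)$ with $A=A_1\cdots A_k$, so $\dim\mathcal{W}_n=\sum_{v,w}A^{n+1}(v,w)-\sum_{v,w}A^n(v,w)$; on the other hand $\#(D_w)=\sum_{w'}A(w,w')$ and the number of degree-$(n,\dots,n)$ paths with source $w$ is $\sum_v A^n(v,w)$, so $\#S_n=\sum_w\big(\sum_v A^n(v,w)\big)\big(\sum_{w'}A(w,w')-1\big)$, which telescopes to exactly $\sum_{v,w'}A^{n+1}(v,w')-\sum_{v,w}A^n(v,w)=\dim\mathcal{W}_n$. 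I expect the main obstacle to be bookkeeping rather than anything conceptual: one must keep the multiplicativity of $M$, the scaling factor $\rho(\Lambda)^{d(\lambda)/2}$, and the index range $1\le i\le \#(D_{s(\lambda)})-1$ consistent throughout, and verify that the count matches exactly (the ``$-1$'' in $\#(D_w)-1$ is precisely what produces the subtraction $\dim\mathscr{V}_{n+1}-\dim\mathscr{V}_n$). Alternatively, spanning could be argued directly by expressing each $\chi_{[\lambda e]}$ in terms of $\chi_{[\lambda]}\in\mathscr{V}_n$ and the vectors of $S_n$, but the dimension count is cleaner.
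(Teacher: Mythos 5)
Your proposal follows the same route as the paper's proof: your step (i) is essentially verbatim the paper's computation (the only nonzero pairing is with $\chi_{[\lambda]}$, and it vanishes by the multiplicativity $M([\lambda\lambda_j]) = \rho(\Lambda)^{-d(\lambda)}M([\lambda_j])$), and your step (iii) is exactly the paper's dimension count, merely expanded in terms of powers of $A = A_1\cdots A_k$. The one place you diverge is step (ii), and there your argument contains an error: the family $S_n$ is \emph{not} orthogonal, because the functions $f^{i,v}$ are not mutually orthogonal for a fixed vertex $v$. Indeed, for $i \neq i'$ the cross terms vanish (the cylinders $[\lambda_0], [\lambda_i], [\lambda_{i'}]$ are pairwise disjoint), but the $\chi_{[\lambda_0]}$ terms pair with themselves:
\[
\langle f^{i,v}, f^{i',v}\rangle \;=\; \frac{1}{M([\lambda_0])^2}\,M([\lambda_0]) \;=\; \frac{1}{M([\lambda_0])} \;>\; 0,
\]
and your (correct) observation that $S_\lambda$ is isometric on functions supported in $[s(\lambda)]$ then gives $\langle S_\lambda f^{i,s(\lambda)}, S_\lambda f^{i',s(\lambda)}\rangle = 1/M([\lambda_0]) \neq 0$ as well. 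To be fair, the paper itself loosely calls $\{f^{i,v}\}$ an ``orthogonal basis'' of $\mathcal{W}_0$ in the paragraph preceding the proposition, and you inherited that claim; but it is only true across distinct vertices, where the supports are disjoint, and it genuinely fails whenever some $\#(D_v) \geq 3$, which always happens for $k \geq 2$ under Hypothesis \ref{hyp:diam-equal-eright}.

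Fortunately the error is harmless for your architecture: all that step (ii) must deliver is linear independence, not orthogonality. You can get it in either of two ways. The paper's way: each $S_\lambda f^{i,s(\lambda)}$ is a combination of $\chi_{[\lambda\lambda_0]}$ and $\chi_{[\lambda\lambda_i]}$, and the disjointness $[\lambda\lambda_i]\cap[\lambda'\lambda'_{i'}] = \delta_{\lambda,\lambda'}\,\delta_{\lambda_i,\lambda'_{i'}}\,[\lambda\lambda_i]$ forces every coefficient of $\chi_{[\lambda\lambda_i]}$ with $i\geq 1$ in a vanishing linear combination to be zero. Or your way, repaired: $S_\lambda$ is isometric, hence injective, on functions supported in $[s(\lambda)]$, the family $\{f^{i,s(\lambda)}\}_i$ is linearly independent (its members involve distinct cylinders $[\lambda_i]$), and elements of $S_n$ with distinct $\lambda$ have disjoint supports. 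With ``orthogonal'' replaced by ``linearly independent'' throughout step (ii), your proof is correct and coincides with the paper's.
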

	\begin{proof}
		The formulas \eqref{eq:S_lambda} and \eqref{eq:f_iv} show that if $d(\lambda) = (n, \ldots, n),$ then $S_\lambda f^{i,s(\lambda)}$ is a linear combination of characteristic functions of cylinder sets of degree $(n+1, \ldots, n+1)$.  Thus, to see that $S_\lambda f^{i, s(\lambda)} \in \mathcal{W}_n$ for each such   $\lambda$ and each $1 \leq i \leq \#(D_{s(\lambda)}) -1$, we must check that $\langle S_\lambda f^{i, s(\lambda)}, \chi_{[\eta]} \rangle = 0$ whenever $d(\eta) = (n, \ldots, n)$.  We compute:
		\begin{align*}
		\frac{1}{\rho(\Lambda)^{d(\lambda)/2}} \langle S_\lambda f^{i, s(\lambda)}, \chi_{[\eta]} \rangle &= \frac{1}{M([\lambda_0])} \int_{X_{\mathcal{B}_\Lambda}} \chi_{[\eta]} \chi_{[\lambda \lambda_0]} \, dM - \frac{1}{M([\lambda_i])}\int_{X_{\mathcal{B}_\Lambda}} \chi_{[\eta]} \chi_{[\lambda \lambda_i]} \, dM \\
		&= \begin{cases}
		0, & \eta \not= \lambda \\
		\frac{M([\lambda \lambda_0])}{M([\lambda_0])} - \frac{M([\lambda \lambda_i])}{M([\lambda_i])}, & \lambda = \eta.
		\end{cases}
		\end{align*}

		Using the formula for $M$ given in Equation \eqref{eq:kgraph_Measure}, we see that
		
		\[ \frac{M([\lambda \lambda_0])}{M([\lambda_0])} - \frac{M([\lambda \lambda_i])}{M([\lambda_i])} = \rho(\Lambda)^{-d(\lambda)} - \rho(\Lambda)^{-d(\lambda)} = 0.\]
		In other words, $ \langle S_\lambda f^{i, s(\lambda)}, \chi_{[\eta]}\rangle_M = 0$ always, so $S_\lambda f^{i, s(\lambda)} \perp \mathscr{V}_n$, and hence  $S_\lambda f^{i, s(\lambda)} \in \mathcal{W}_n$ for all $\lambda$ and for all $i$.
		Moreover, $S_n$ is easily seen to be a linearly independent set: if $d(\lambda) = d(\lambda') = (n, \ldots, n)$ and $d(\lambda_i) = d(\lambda_i') = (1, \ldots, 1)$, 
		\[ [\lambda \lambda_i] \cap [\lambda' \lambda_i'] = \delta_{\lambda, \lambda'} \delta_{\lambda_i,\lambda_i'} [\lambda \lambda_i].\]
		
		Since  $\dim \mathcal{W}_n = \dim \mathscr{V}_{n+1} - \dim \mathscr{V}_n  = \#(\Lambda^{(n+1, \ldots, n+1)}) - \#(\Lambda^{(n, \ldots, n)}) $
		and 
		\[\#(S_n) = \sum_{\lambda \in \Lambda^{(n,\ldots, n)}} (\#(D_{s(\lambda)}) -1) = \#(\Lambda^{(n+1, \ldots, n+1)}) - \#(\Lambda^{(n, \ldots, n)})\]
		we  have $\mathcal{W}_n = \text{span}\, S_n$ as claimed. 
	\end{proof}
}

\begin{thm}\label{thm:wavelet-k}
	Let $\Lambda$ be a finite, strongly connected $k$-graph with adjacency matrices $A_i$. Suppose that $A=A_1\cdots A_k$ is irreducible. For any weight $w_\delta$ on the associated Bratteli diagram $\mathcal{B}_\Lambda$ as in Proposition \ref{pr:delta-weight},
	{such that Hypothesis \ref{hyp:diam-equal-eright} holds for $(\mathcal B_\Lambda, w_\delta)$,}
	the eigenspaces of the associated Laplace--Beltrami operators $\Delta_s$ refine the wavelet decomposition of \eqref{eq:wavelet-decomp}: 
	\[\mathscr{V}_0 = E_{-1} \oplus E_0 \quad \text{ and } \quad \mathcal{W}_n = \text{span}\, \{ E_\gamma: |\gamma| = n k + t,\, 0 \leq t \leq k-1\}.\]
	\label{thm:JS-wavelets}
\end{thm}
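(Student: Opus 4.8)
The plan is to reduce everything to a single increasing filtration of $L^2(X_{\mathcal B_\Lambda}, M)$ by Bratteli path length, and to match it against the square-degree filtration defining the wavelet spaces. For $m\ge 0$ set $W_m = \text{span}\{\chi_{[\gamma]} : \gamma\in F^m\mathcal B_\Lambda\}$; since the cylinder sets of a fixed length partition $X_{\mathcal B_\Lambda}$, the family $\{\chi_{[\gamma]}:|\gamma|=m\}$ is an orthogonal basis, so $\dim W_m = \#F^m\mathcal B_\Lambda$. The relation $\chi_{[\gamma]} = \sum_{e:\,r(e)=s(\gamma)}\chi_{[\gamma e]}$ gives the nesting $W_0\subseteq W_1\subseteq\cdots$, and the identification of length-$nk$ Bratteli paths with square morphisms of degree $(n,\dots,n)$ (Remark \ref{rmk:no-bij-finite-paths}) yields $\mathscr V_n = W_{nk}$, hence $\mathcal W_n = \mathscr V_{n+1}\cap\mathscr V_n^\perp = W_{(n+1)k}\cap W_{nk}^\perp$.

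The key step is a one-level lemma: for every $m\ge 0$,
\[ W_{m+1}\cap W_m^\perp = \bigoplus_{\gamma\in F^m\mathcal B_\Lambda} E_\gamma. \]
First I would verify the containment ``$\supseteq$''. Each generator $\tfrac{1}{M([\gamma e])}\chi_{[\gamma e]}-\tfrac{1}{M([\gamma e'])}\chi_{[\gamma e']}$ of $E_\gamma$ clearly lies in $W_{m+1}$, and pairing it with $\chi_{[\eta]}$ for $|\eta|=m$ gives $0$: the supports are disjoint unless $\eta=\gamma$, and when $\eta=\gamma$ the two terms contribute $1-1=0$. Thus $E_\gamma\perp W_m$, so $E_\gamma\subseteq W_{m+1}\cap W_m^\perp$; moreover distinct $E_\gamma,E_{\gamma'}$ with $|\gamma|=|\gamma'|=m$ have disjoint supports and hence are orthogonal (as already recorded in Proposition \ref{prob-eigenspace-LB-ops}), so the sum is orthogonal. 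Equality then follows from dimensions: $\dim E_\gamma = \#\{e:r(e)=s(\gamma)\}-1$, and summing over $\gamma\in F^m\mathcal B_\Lambda$ telescopes to $\#F^{m+1}\mathcal B_\Lambda - \#F^m\mathcal B_\Lambda = \dim W_{m+1}-\dim W_m = \dim(W_{m+1}\cap W_m^\perp)$. Here Hypothesis \ref{hyp:diam-equal-eright}, via Lemma \ref{lem:weight-diam}, ensures $\#\{e:r(e)=s(\gamma)\}\ge 2$, so each $E_\gamma$ is nonzero and $\text{ext}_1(\gamma)\ne\emptyset$.

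With the lemma in hand the theorem is pure assembly. For the vertex level, $\mathscr V_0 = W_0 = \text{span}\{\chi_{[v]}\}_{v\in\mathcal V_0}$: the constant function spans $E_{-1}$, its orthogonal complement inside $W_0$ is exactly $E_0$ (the span of vertex differences, which integrate to $0$ against $\chi_{X_{\mathcal B_\Lambda}}$), and the count $1+(\#\mathcal V_0-1)=\#\mathcal V_0=\dim W_0$ gives $\mathscr V_0 = E_{-1}\oplus E_0$. For $n\ge 0$, the standard orthogonal decomposition of the nested chain $W_{nk}\subseteq\cdots\subseteq W_{(n+1)k}$ combined with the lemma yields
\[ \mathcal W_n = W_{(n+1)k}\cap W_{nk}^\perp = \bigoplus_{m=nk}^{(n+1)k-1}\bigl(W_{m+1}\cap W_m^\perp\bigr) = \bigoplus_{m=nk}^{(n+1)k-1}\ \bigoplus_{\gamma\in F^m\mathcal B_\Lambda} E_\gamma, \]
which is precisely $\text{span}\{E_\gamma : nk\le|\gamma|\le(n+1)k-1\} = \text{span}\{E_\gamma : |\gamma|=nk+t,\ 0\le t\le k-1\}$.

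I expect the main obstacle to be purely organizational rather than analytic: aligning the path-length filtration $(W_m)$ with the square-degree filtration $\mathscr V_n = W_{nk}$, and keeping straight that the literal subscript in $E_0$ denotes the \emph{vertex-difference} space sitting inside $\mathscr V_0$, whereas the spaces $E_\gamma$ with $|\gamma|=0$ (the vertices, regarded as paths awaiting a further edge) instead populate $\mathcal W_0$. The only genuinely quantitative ingredient is the dimension identity $\dim E_\gamma = \#\{e:r(e)=s(\gamma)\}-1$ and its telescoping sum; everything else reduces to orthogonality of characteristic functions of disjoint cylinder sets and the elementary decomposition of a nested sequence of finite-dimensional subspaces.
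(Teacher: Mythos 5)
Your proof is correct, and it takes a genuinely different route from the paper's. The paper proves the containment $E_\eta \subseteq \mathcal{W}_n$ by explicit computation inside the representation of $C^*(\Lambda)$: it writes $\eta = \eta_n\eta_t$, and shows (Equation \eqref{eq:f-eta-in-W-n}) that each generator $f_\eta$ of $E_\eta$ is the image under the scaling-and-translation operator $S_{\eta_n}$ of an explicit linear combination of the mother functions $f^{i,r(\eta_t)}$, invoking Proposition \ref{prop:S_n} (that the $S_\lambda f^{i,s(\lambda)}$ form a basis of $\mathcal{W}_n$) together with measure identities coming from Equation \eqref{eq:kgraph_Measure}; equality then follows from a dimension count over all $t$ at once. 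You replace all of this with the finer filtration $W_m$ by Bratteli path length and the one-level lemma $W_{m+1}\cap W_m^\perp = \bigoplus_{|\gamma|=m}E_\gamma$, proved by elementary orthogonality of characteristic functions of cylinder sets plus a telescoping dimension count ($\sum_{\gamma\in F^m\mathcal B_\Lambda}(\#\{e: r(e)=s(\gamma)\}-1) = \#F^{m+1}\mathcal B_\Lambda - \#F^m\mathcal B_\Lambda$, since every length-$(m+1)$ path factors uniquely as a length-$m$ path followed by an edge); the theorem is then pure assembly via the standard decomposition of a nested chain of finite-dimensional subspaces. The dimension counts in the two arguments are essentially the same telescoping bijection, but your containment step is substantially more elementary: it never uses the operators $S_\lambda$, the mother wavelets, or Proposition \ref{prop:S_n}, and it treats the vertex level ($\mathscr V_0 = E_{-1}\oplus E_0$, which the paper handles by an ad hoc rewriting of $\chi_{[v]}$) and the levels inside each $\mathcal W_n$ completely uniformly. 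What the paper's computation buys in exchange is precisely the point emphasized in the introduction to Section \ref{sec-wavelets-as-eigenfunctions}: it exhibits the Laplace--Beltrami eigenvectors concretely as $S_\lambda$-images of the mother functions, making visible the link between the eigenspaces and the representation of $C^*(\Lambda)$, whereas in your argument that connection remains implicit. Two small points you handled correctly and should keep explicit in a final write-up: the distinction between the vertex-difference space $E_0$ and the spaces $E_\gamma$ with $|\gamma|=0$ (which live in $\mathcal W_0$), and the role of Hypothesis \ref{hyp:diam-equal-eright} (via Lemma \ref{lem:weight-diam}) in guaranteeing $\operatorname{ext}_1(\gamma)\neq\emptyset$; note also that irreducibility of $A$ enters only through the identification of the ambient measure with $M$ and the construction of $\Delta_s$, not through your linear algebra, exactly as in the paper.
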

\begin{proof}
	First observe that under  the identification of $\Lambda^0 \subseteq \Lambda$ with $\mathcal{V}_0 \subseteq \mathcal{B}_\Lambda,$ we have $E_0 \subseteq \mathscr{V}_0$ and $E_{-1} \subseteq \mathscr{V}_0$, since the spanning vectors of both $E_0$ and $E_{-1}$ are linear combinations of $\chi_{[v]}$ for vertices $v$. Thus $E_{-1} \oplus E_0\subset \mathscr{V}_0$. For the other inclusion, 
	we compute
	\begin{align*} \left( 1 + \sum_{w \not= v \in \Lambda^0} \frac{M([w])}{M([v])} \right) \chi_{[v]} &= \chi_{\Lambda^\infty} - \sum_{w\not= v \in \Lambda^0}  \chi_{[w]}  + \sum_{w \not= v} \frac{M([w])}{ M([v])}  \chi_{[v]} \\
	&= \chi_{\Lambda^\infty} -\sum_{w \not= v} \mu[w] \left( \frac{1}{M([w])} \chi_{[w]} - \frac{1}{M([v])} \chi_{[v]} \right)  .
	\end{align*}
	By rescaling, we see that $\chi_{[v]} \in E_{-1} \oplus E_0$, and hence $\mathscr{V}_0 = E_{-1} \oplus E_0$ as claimed.
	
	To examine the claim about $\mathcal{W}_n$, let $\eta\in F\mathcal{B}_\Lambda$ with $|\eta|= n k + t$.  In other words, $\eta$ represents an element of degree $(\overbrace{
		n+1, \ldots, n+1}^t,n, \ldots, n)$ in the associated $k$-graph. Choose a typical generating element $f_\eta$ of $E_\eta$ as in Equation \eqref{eq:eigenspace_E_gamma},
	\[f_\eta = \frac{1}{M([\eta e])} \chi_{[\eta e]} - \frac{1}{M([\eta e'])} \chi_{[\eta e']},\]
	
	where $(e,e')\in \text{ext}_1(\eta)$.
	Write $\eta = \eta_n \eta_t$, where $d(\eta_n) = (n, \ldots, n)$ and $d(\eta_t) = (\overbrace{1, \ldots, 1}^t, 0, \ldots, 0)$.  Enumerate the paths in $r(\eta_t) \Lambda^{(1, \ldots, 1)}$ as 
	\[ \{\lambda_0, \ldots, \lambda_m, \lambda_{m+1}, \ldots, \lambda_{m+\ell}, \lambda_{m+\ell+ 1}, \ldots, \lambda_{m+\ell + p}\}\]
	where the paths $\lambda_i$ for $0 \leq i \leq m$ are the extensions of $\eta_t e$ and the paths $\lambda_i$ for $m+1 \leq i \leq m+\ell$ are the extensions of $\eta_t e'$.  Then 
	\begin{equation}
	f_\eta = \frac{1}{M([\eta e])} \sum_{i=0}^m \chi_{[\eta_n \lambda_i]} - \frac{1}{M([\eta e'])} \sum_{i=m+1}^{m+\ell} \chi_{[\eta_n \lambda_i]}.\label{eq:f-eta-as-squares}
	\end{equation}
	Using Equations \eqref{eq:S_lambda} and \eqref{eq:f_iv}, we obtain
	\[ S_{\eta_n} f^{i, r(\eta_t)} = \rho(\Lambda)^{(n/2, \ldots, n/2)} \left( \frac{1}{M([\lambda_0])} \chi_{[\eta_n \lambda_0]} - \frac{1}{M([\lambda_i])} \chi_{[\eta_n \lambda_i]} \right),\]
	and hence
	\begin{equation}
	\label{eq:f-eta-in-W-n}
	\begin{split}
	S_{\eta_n} & \left( \sum_{i=1}^m \frac{-M([\lambda_i])}{M([\eta e])} f^{i, r(\eta_t)}  + \sum_{i=m+1}^{m+\ell} \frac{M([\lambda_i])}{M([\eta e'])} f^{i, r(\eta_t)} \right) \\
	& = \rho(\Lambda)^{(n/2, \ldots, n/2)}\left(  \frac{1}{M([\eta e])}\sum_{i=1}^m \chi_{[\eta_n \lambda_i]} - \frac{1}{M([\eta_n e'])} \sum_{i=m+1}^{m+\ell} \chi_{[\eta_n \lambda_i]} \right. \\
	& \qquad \qquad \qquad \left. + \frac{1}{M([\lambda_0])} { \chi_{[\eta_n\lambda_0]} } \left( \sum_{i=1}^m \frac{-M([\lambda_i])}{M([\eta e]) } + \sum_{i=m+1}^{m+\ell} \frac{M([\lambda_i])}{M([\eta e'] )} \right) \right)\\
	&= \rho(\Lambda)^{(n/2, \ldots, n/2)} \left(  f_\eta + \frac{1}{M([\lambda_0])} \chi_{[\eta_n \lambda_0]} \left( \sum_{i=0}^m \frac{-M([\lambda_i])}{M([\eta e]) } + \sum_{i=m+1}^{m+\ell} \frac{M([\lambda_i])}{M([\eta e']) } \right) \right).
	\end{split}
	\end{equation}
	Since the paths $\lambda_i$, for $0 \leq i \leq m$, constitute the extensions of $\eta_t e$ with the same degree $(1,\ldots,1)$, we have $\sum_{i=0}^m M([\lambda_i]) = M([\eta_t e]))$.  Similarly, $\sum_{j=m+1}^{m+\ell} M([\lambda_j]) = M([\eta_t e'])$.  Moreover, 
	\[ \frac{M([\eta_t e])}{M([\eta e])} = \rho(\Lambda)^{d(\eta e) - d(\eta_t e)}= \rho(\Lambda)^{d(\eta_n)} = \frac{M([\eta_t e'])}{M([\eta e'])}.\]
	In other words, the coefficient of $\chi_{[\eta_n \lambda_0]}$ in Equation \eqref{eq:f-eta-in-W-n} is zero, and so $f_\eta \in \mathcal{W}_n$.
	
	If our ``preferred path'' $\lambda_0$ is not an extension of either $e$ or $e'$, Equations \eqref{eq:f-eta-as-squares} and  \eqref{eq:f-eta-in-W-n} hold in a modified form without the zeroth term,  and we again have $f_\eta \in \mathcal W_n$.   In other words,
	\[ E_\eta \subseteq \mathcal{W}_n \ \text{ whenever } |\eta| = n k + t.\]

	To see that $\mathcal{W}_n= \bigoplus_{t=0}^{k-1} \bigoplus_{|\eta| = nk + t} E_\eta$, we first recall from Proposition (\ref{prob-eigenspace-LB-ops}) that if $\eta_1$ and $\eta_2$ are paths that are not equal, then $E_{\eta_1}\perp E_{\eta_2}$.  After this, we again use a dimension argument.  
	If $|\eta| = nk+t$, we know from \cite{julien-savinien} Theorem 4.3 that $\dim E_\eta = \#( s(\eta) \Lambda^{e_{t+1}}) -1$.  Since we have a bijection between 
	\[ \bigcup_{|\eta| = nk + t} s(\eta) \Lambda^{e_{t+1}} \quad \text{ and } \quad \Lambda^{d(\eta) + e_{t+1}},\] 
	\begin{align*}
	\dim \left(  \bigoplus_{t=0}^{k-1} \bigoplus_{|\eta| = nk + t} E_\eta \right)& = \sum_{t=1}^k \#\Big( \Lambda^{(\overbrace{n+1, \ldots, n+1}^t, n, \ldots, n)}\Big) - \sum_{t=0}^{k-1} \#\Big( \Lambda^{(\overbrace{n+1, \ldots, n+1}^t, n, \ldots, n)}\Big) \\
	&= \# (\Lambda^{(n+1, \ldots, n+1)}) - \# (\Lambda^{(n, \ldots, n)} )\\
	& = \dim \mathcal{W}_n. \qedhere
	\end{align*} 
\end{proof}

\begin{rmk}
	\label{rmk:wavelets-eigenfcns-1graph}
	Recall that a directed graph with adjacency matrix $A$ gives rise to both a stationary Bratteli diagram with adjacency matrix $A$, and a 1-graph -- namely, the category of its finite paths.  Moreover, for many 1-graphs the wavelets of \cite[Section 4]{FGKP} agree with the wavelets of \cite[Section 3]{marcolli-paolucci}.  (Marcolli and Paolucci only considered in \cite{marcolli-paolucci} strongly connected directed graphs whose adjacency matrix $A$ has entries from $\{0,1\}$; but for all such directed graphs, the wavelets of \cite[Section 4]{FGKP} agree with the wavelets of \cite[Section 3]{marcolli-paolucci}.) Thus, in this situation, Theorem \ref{thm:JS-wavelets} above implies that the  eigenspaces of the Laplace-Beltrami operators $\Delta_s$ associated to the stationary Bratteli diagram with adjacency matrix $A$, as in \cite{julien-savinien} Section 4, refine the graph wavelets from Section 3 of \cite{marcolli-paolucci}. 
\end{rmk}

\begin{rmk}
	\label{rmk:wavelets-eigenfcns_Jrectangles}
	In \cite{FGKP2}, four of the authors of the current paper introduced for any $k$-tuple $J=(J_1,J_2,\cdots, J_k)\in \mathbb N^k$ the so-called $J$-shaped wavelet decomposition of the Hilbert space $L^2(\Lambda^{\infty},M):$
	$$L^2(\Lambda^\infty, M) = \mathscr{V}_0 \oplus \bigoplus_{q\in \N}  \mathcal{W}_{q, \ell}^J.$$	
	It is not difficult to modify our definition of the $k$-stationary  Bratteli diagram associated to $\Lambda$ and obtain a new Bratteli diagram   using $J:$ 
	$$\mathcal{B}_\Lambda^J = ((\mathcal{V}_\Lambda^J)^n, (\mathcal{E}_\Lambda^J)^n),$$
	{where $(\mathcal V_\Lambda^J)^n = \mathcal V_0 = \Lambda^0$ for all $n$, and if $n = q (J_1 + \cdots  + J_k) + (J_1 + \cdots + J_\ell) + t$ for some $0 \leq t < J_{\ell+1}$, then $(\mathcal E_\Lambda^J)^n$ has adjacency matrix 
		\[(A_1^{J_1} A_2^{J_2} \cdots A_k^{J_k})^q (A_1^{J_1} \cdots A_\ell ^{J_\ell}) A_{\ell+1}^t.\]
		
		Analogously, one can modify the definition of the weight $w_\delta$ from Equation \eqref{eq:w-delta} to obtain a weight, and hence an ultrametric, on $\mathcal B_\Lambda^J$ whenever $0 < \delta < 1$.}
	Assuming that Hypothesis \ref{hyp:diam-equal-eright} holds in this setting,  we thus obtain  a Pearson-Bellissard type spectral triple for $X_{\mathcal{B}_\Lambda^J} \cong \Lambda^\infty$,  for which the measure induced on $X_{\mathcal B_\Lambda^J}$ by the Dixmier trace  agrees with the measure $M$ given in Equation~\eqref{eq:kgraph_Measure} on $\Lambda^\infty$ if $A_1^{J_1} \cdots A_k^{J_k}$ is irreducible, as in Theorem \ref{thm:dixmier-aHLRS}.  Then, constructing the associated Laplace-Beltrami operators, an easy modification of the proof of Theorem \ref{thm:wavelet-k} shows that 
	\[ \mathcal{W}_{q}^J = \text{span} \{E_\gamma:{q (J_1 + \cdots + J_k) \leq  |\gamma| < (q+1) (J_1 + \cdots + J_k)} \}\]
	in this more general case, as well.
\end{rmk}

\bibliographystyle{amsplain}
\bibliography{eagbib}

\providecommand{\bysame}{\leavevmode\hbox to3em{\hrulefill}\thinspace}
\providecommand{\MR}{\relax\ifhmode\unskip\space\fi MR }
\providecommand{\MRhref}[2]{%
  \href{http://www.ams.org/mathscinet-getitem?mr=#1}{#2}
}
\providecommand{\href}[2]{#2}
\begin{thebibliography}{10}

\bibitem{amini-elliott-golestani}
M.~Amini, Elliott. G.A., and N.~Golestani, \emph{The category of ordered
  {B}ratteli diagrams}, arXiv:1509:07246, 2015.

\bibitem{battle-federbush-uhilg}
G.~Battle, P.~Federbush, and P.~Uhlig, \emph{Wavelets for quantum gravity and
  divergence-free wavelets}, Appl. Comput. Harmon. Anal. \textbf{1} (1994),
  295--297.

\bibitem{bellissard-gambaudo}
Jean Bellissard, Riccardo Benedetti, and Jean-Marc Gambaudo, \emph{Spaces of
  tilings, finite telescopic approximations and gap-labeling}, Comm. Math.
  Phys. \textbf{261} (2006), no.~1, 1--41.

\bibitem{BGV}
N.~Berline, E.~Getzler, and M.~Vergne, \emph{Heat kernels and {D}irac
  operators}, Grundlehren der Mathematischen Wissenschaften [Fundamental
  Principles of Mathematical Sciences], vol. 298, Springer-Verlag, Berlin,
  1992.

\bibitem{BP94}
A.~Berman and R.J. Plemmons, \emph{Nonnegative matrices in the mathematical
  sciences}, Classics in Applied Mathematics, vol.~9, Society for Industrial
  and Applied Mathematics (SIAM), Philadelphia, PA, 1994, Revised reprint of
  the 1979 original.

\bibitem{bezuglyi-jorgensen}
S.~Bezuglyi and P.E.T. Jorgensen, \emph{Representations of {C}untz-{K}rieger
  relations, dynamics on {B}ratteli diagrams, and path-space measures}, Trends
  in harmonic analysis and its applications, Contemp. Math., vol. 650, Amer.
  Math. Soc., Providence, RI, 2015, pp.~57--88.

\bibitem{blackadar}
B.~Blackadar, \emph{{$K$}-theory for operator algebras}, Cambridge University
  Press, 1998.

\bibitem{bcfs}
J.H. Brown, L.O. Clark, C.~Farthing, and A.~Sims, \emph{Simplicity of algebras
  associated to \'etale groupoids}, Semigroup Forum \textbf{88} (2014),
  433--452.

\bibitem{bnrsw}
J.H. Brown, G.~Nagy, S.~Reznikoff, A.~Sims, and D.P. Williams, \emph{Cartan
  subalgebras in {$C^*$}-algebras of {H}ausdorff \'etale groupoids}, Integral
  Equations Operator Theory \textbf{85} (2016), 109--126.

\bibitem{CPS}
A.~Carey, J.~Phillips, and F.~Sukochev, \emph{Spectral flow and {D}ixmier
  traces}, Adv. Math. \textbf{173} (2003), no.~1, 68--113.

\bibitem{CPR-Semifinite}
A.L. Carey, J.~Phillips, and A.~Rennie, \emph{Semifinite spectral triples
  associated with graph {$C^\ast$}-algebras}, Traces in number theory, geometry
  and quantum fields, Aspects Math., E38, Friedr. Vieweg, Wiesbaden, 2008,
  pp.~35--56.

\bibitem{ckss}
T.M. Carlsen, S.~Kang, J.~Shotwell, and A.~Sims, \emph{The primitive ideals of
  the {C}untz-{K}rieger algebra of a row-finite higher-rank graph with no
  sources}, J. Funct. Anal. \textbf{266} (2014), 2570--2589.

\bibitem{christensen-ivan-AF}
E.~Christensen and C.~Ivan, \emph{Spectral triples for {AF} {$C^*$}-algebras
  and metrics on the {C}antor set}, J. Operator Theory \textbf{56} (2006),
  17--46.

\bibitem{CI-Two}
\bysame, \emph{Sums of two-dimensional spectral triples}, Math. Scand.
  \textbf{100} (2007), no.~1, 35--60.

\bibitem{christensen-ivan-lapidus}
E.~Christensen, C.~Ivan, and M.L. Lapidus, \emph{Dirac operators and spectral
  triples for some fractal sets built on curves}, Adv. Math. \textbf{217}
  (2008), 42--78.

\bibitem{christensen-ivan-schrohe}
E.~Christensen, C.~Ivan, and E.~Schrohe, \emph{Spectral triples and the
  geometry of fractals}, J. Noncommut. Geom. \textbf{6} (2012), 249--274.

\bibitem{CDI}
F.~Cipriani, D.~Guido, T.~Isola, and J.-L. Sauvageot, \emph{Spectral triples
  for the {S}ierpinski gasket}, J. Funct. Anal. \textbf{266} (2014), no.~8,
  4809--4869.

\bibitem{clark-huef-sims}
L.O. Clark, A.~an Huef, and A.~Sims, \emph{A{F}-embeddability of 2-graph
  algebras and quasidiagonality of {$k$}-graph algebras}, J. Funct. Anal.
  \textbf{271} (2016), 958--991.

\bibitem{connes}
A.~Connes, \emph{Noncommutative geometry}, Academic Press, Inc., San Diego, CA,
  1994.

\bibitem{connes-reconstruction}
\bysame, \emph{On the spectral characterization of manifolds}, J. Noncommut.
  Geom. \textbf{7} (2013), 1--82.

\bibitem{connes-lott}
A.~Connes and J.~Lott, \emph{Particle models and noncommutative geometry},
  Nuclear Phys. B Proc. Suppl. \textbf{18B} (1990), 29--47 (1991).

\bibitem{connes-mcdonald-sukochev-zanin}
A.~Connes, E.~Mc{D}onald, F.~Sukochev, and D.~Zanin, \emph{Conformal trace
  theorem for {J}ulia sets of quadratic polynomials}, Ergodic Theory Dynam.
  Systems (2017).

\bibitem{connes-moscovici}
A.~Connes and H.~Moscovici, \emph{The local index formula in noncommutative
  geometry}, Geom. Funct. Anal. \textbf{5} (1995), no.~2, 174--243.

\bibitem{davidson-yang-periodicity}
K.R. Davidson and D.~Yang, \emph{Periodicity in rank 2 graph algebras}, Canad.
  J. Math. \textbf{61} (2009), 1239--1261.

\bibitem{BDurrett}
R.~Durrett, \emph{Probability: Theory and examples}, second edition, Cambridge
  University Press, , Cambridge, MA, 1996.

\bibitem{ellis-mavromatos-nanopoulos-sakharov}
J.~Ellis, N.E. Marvomatos, D.V. Nanopoulos, and A.S. Sakharov,
  \emph{Quantum-gravity analysis of gamma-ray bursts using wavelets}, A\&A
  \textbf{402} (2003), 409--424.

\bibitem{FGJKP1}
C.~Farsi, E.~Gillaspy, A.~Julien, S.~Kang, and J.~Packer, \emph{Wavelets and
  spectral triples for fractal representations of {C}untz algebras}, Problems
  and recent methods in operator theory, Contemp. Math., vol. 687, Amer. Math.
  Soc., Providence, RI, 2017, pp.~103--133.

\bibitem{FGKP}
C.~Farsi, E.~Gillaspy, S.~Kang, and J.~Packer, \emph{Separable representations,
  {KMS} states, and wavelets for higher-rank graphs}, J. Math. Anal. Appl.
  \textbf{434} (2016), 241--270.

\bibitem{FGKP2}
\bysame, \emph{Wavelets and graph ${C}^*$-algebras}, Excursions in Harmonic
  Analysis, vol.~5, 2017, Appl. Numer. Hamon. Anal., Birkh\"auser/Springer,
  Cham., pp.~35--86.

\bibitem{FGLP}
C.~Farsi, E.~Gillaspy, N.S. Larsen, and J.~Packer, \emph{Generalized gauge
  actions on $k$-graph ${C}^*$-algebras: {KMS} states and {H}ausdorff
  structure}, arXiv:1807.08665, 2018.

\bibitem{Goffeng-Mesland-Doc}
M.~Goffeng and B.~Mesland, \emph{Spectral triples and finite summability on
  {C}untz-{K}rieger algebras}, Doc. Math. \textbf{20} (2015), 89--170.

\bibitem{goffeng-mesland-rennie}
M.~Goffeng, B.~Mesland, and A.~Rennie, \emph{Shift-tail equivalence and an
  unbounded representative of the {C}untz-{P}imsner extension}, Ergodic Theory
  Dynam. Systems \textbf{38} (2018), no.~4, 1389--1421.

\bibitem{G-B}
J.M. Gracia-Bond{\'\i}a, J.C. V{\'a}rilly, and H.~Figueroa, \emph{Elements of
  noncommutative geometry}, Birkh{\"a}user, 2001.

\bibitem{GI-vN}
D.~Guido and T.~Isola, \emph{Singular traces on semifinite von {N}eumann
  algebras}, J. Funct. Anal. \textbf{134} (1995), no.~2, 451--485.

\bibitem{guido-isola}
\bysame, \emph{Dimensions and singular traces for spectral triples, with
  applications to fractals}, J. Funct. Anal. \textbf{203} (2003), 362--400.

\bibitem{hardy-riesz}
G.H. {Hardy} and M.~{Riesz}, \emph{{The general theory of {\it Dirichlet}'s
  series.}}, {Cambridge University Press}, 1915.

\bibitem{horn-johnson-matrix-analysis}
R.A. Horn and C.R. Johnson, \emph{Matrix analysis}, second ed., Cambridge
  University Press, Cambridge, 2013.

\bibitem{aHKR}
A.~an Huef, S.~Kang, and I.~Raeburn, \emph{Spatial realisations of {KMS} states
  on the {$C^*$}-algebras of higher-rank graphs}, J. Math. Anal. Appl.
  \textbf{427}, 977--1003.

\bibitem{aHLRS1}
A.~an Huef, M.~Laca, I.~Raeburn, and A.~Sims, \emph{K{MS} states on
  {$C^*$}-algebras associated to higher-rank graphs}, J. Funct. Anal.
  \textbf{266} (2014), 265--283.

\bibitem{aHLRS}
\bysame, \emph{{KMS} states on the ${C}^*$-algebra of a higher-rank graph and
  periodicity in the path space}, J. Funct. Anal. \textbf{268} (2015),
  1840--1875.

\bibitem{jonsson}
A.~Jonsson, \emph{Wavelets on fractals and {B}esov spaces}, J. Fourier Anal.
  Appl. \textbf{4} (1998), 329--340.

\bibitem{julien-savinien}
A.~Julien and J.~Savinien, \emph{Transverse {L}aplacians for substitution
  tilings}, Comm. Math. Phys. \textbf{301} (2011), 285--318.

\bibitem{kaku}
Shizuo Kakutani, \emph{On equivalence of infinite product measures}, Ann. of
  Math. (2) \textbf{49} (1948), 214--224.

\bibitem{kalau-hamilton-formalism}
W.~Kalau, \emph{Hamilton formalism in non-commutative geometry}, J. Geom. Phys.
  \textbf{18} (1996), 349--380.

\bibitem{kang-pask}
S.~Kang and D.~Pask, \emph{Aperiodicity and primitive ideals of row-finite
  {$k$}-graphs}, Internat. J. Math. \textbf{25} (2014), 1450022, (25 pages).

\bibitem{KS}
J.~Kellendonk and J.~Savinien, \emph{Spectral triples from stationary
  {B}ratteli diagrams}, Michigan Math. J. \textbf{65} (2016), no.~4, 715--747.

\bibitem{kesse-samuel}
M.~Kesseb\"{o}hmer and T.~Samuel, \emph{Spectral metric spaces for {G}ibbs
  measures}, J. Funct. Anal. \textbf{265} (2013), no.~9, 1801--1828.

\bibitem{khrennikov-kozyrev}
A.~Yu. Khrennikov and S.V. Kozyrev, \emph{Wavelets on ultrametric spaces},
  Appl. Comput. Harmon. Anal. \textbf{19} (2005), 61--76.

\bibitem{khrennikov-kozyrev2}
A.Yu. Khrennikov and S.V. Kozyrev, \emph{Pseudodifferential operators on
  ultrametric spaces and ultrametric wavelets}, Izv. Math. \textbf{69} (2005),
  989--1003.

\bibitem{kozyrev}
S.V. Kozyrev, \emph{Wavelet theory as {$p$}-adic spectral analysis}, Izv. Ross.
  Akad. Nauk Ser. Mt. \textbf{66} (2002), 149--158.

\bibitem{kp}
A.~Kumjian and D.~Pask, \emph{Higher rank graph ${C}^*$-algebras}, New York J.
  Math. \textbf{6} (2000), 1--20.

\bibitem{LaSa}
M.~Lapidus and J.~Sarhad, \emph{Dirac operators and geodesic metric on the
  harmonic {S}ierpinski gasket and other fractal sets}, J. Noncommut. Geom.
  \textbf{8} (2014), no.~4, 947--985.

\bibitem{lapidus}
M.L. Lapidus, \emph{Towards a noncommutative fractal geometry? {L}aplacians and
  volume measures on fractals}, Harmonic analysis and nonlinear differential
  equations ({R}iverside, {CA}, 1995), Contemp. Math., vol. 208, Amer. Math.
  Soc., Providence, RI, 1997, pp.~211--252.

\bibitem{lord-sedaev-sukochev}
S.~Lord, A.~Sedaev, and F.~Sukochev, \emph{Dixmier traces as singular symmetric
  functionals and applications to measurable operators}, J. Funct. Anal.
  \textbf{224} (2005), no.~1, 72--106.

\bibitem{lord-book}
S.~Lord, F.~Sukochev, and D.~Zanin, \emph{Singular traces}, De Gruyter Studies
  in Mathematics, vol.~46, De Gruyter, Berlin, 2013, Theory and applications.

\bibitem{LS}
S.~Lord and F.A. Sukochev, \emph{Noncommutative residues and a characterisation
  of the noncommutative integral}, Proc. Amer. Math. Soc. \textbf{139} (2011),
  no.~1, 243--257.

\bibitem{marcolli-paolucci}
M.~Marcolli and A.M. Paolucci, \emph{Cuntz-{K}rieger algebras and wavelets on
  fractals}, Complex Analysis and Operator Theory \textbf{5} (2011), 41--81.

\bibitem{prrs}
D.~Pask, I.~Raeburn, M.~R\o rdam, and A.~Sims, \emph{Rank-two graphs whose
  {$C^*$}-algebras are direct limits of circle algebras}, J. Funct. Anal.
  \textbf{239} (2006), 137--178.

\bibitem{pearson-bellissard}
J.~Pearson and J.~Bellissard, \emph{Noncommutative {R}iemannian geometry and
  diffusion on ultrametric {C}antor sets}, J. Noncommut. Geom. \textbf{3}
  (2009), 447--480.

\bibitem{rsy2}
I.~Raeburn, A.~Sims, and T.~Yeend, \emph{The ${C}^*$-algebras of finitely
  aligned higher-rank graphs}, J. Funct. Anal. \textbf{213} (2004), 206--240.

\bibitem{Reed-Simon-I}
M.~Reed and B.~Simon, \emph{Methods of modern mathematical physics. {I}},
  second ed., Academic Press, Inc. [Harcourt Brace Jovanovich, Publishers], New
  York, 1980, Functional analysis.

\bibitem{robertson-sims}
D.I. Robertson and A.~Sims, \emph{Simplicity of ${C}^*$-algebras associated to
  higher-rank graphs}, Bull. Lond. Math. Soc. \textbf{39} (2007), 337--344.

\bibitem{rothblum81}
U.G. Rothblum, \emph{Expansions of sums of matrix powers}, SIAM Rev.
  \textbf{23} (1981), 143--164.

\bibitem{spielberg-kirchberg}
J.~Spielberg, \emph{Graph-based models for {K}irchberg algebras}, Journal of
  Operator Theory \textbf{57} (2007), 347--374.

\bibitem{strichartz}
R.~Strichartz, \emph{Construction of orthonormal wavelets}, Wavelets:
  mathematics and applications, Stud. Adv. Math., CRC Press, Boca Raton, FL,
  1994, pp.~23--50.

\end{thebibliography}

\end{document}